\DeclareFontFamily{OT2}{cmr}{\hyphenchar\font45 }
\DeclareFontShape{OT2}{cmr}{m}{n}{<->wncyr10}{}
\DeclareFontShape{OT2}{cmr}{m}{it}{<->wncyi10}{}
\DeclareFontShape{OT2}{cmr}{m}{sc}{<->wncysc10}{}
\DeclareFontShape{OT2}{cmr}{b}{n}{<->wncyb10}{}
\DeclareFontShape{OT2}{cmr}{bx}{n}{<->ssub*wncyr/b/n}{}
\DeclareFontFamily{OT2}{cmss}{\hyphenchar\font45 }
\DeclareFontShape{OT2}{cmss}{m}{n}{<->wncyss10}{}
\DeclareRobustCommand\cyr{\fontencoding{OT2}\selectfont}
\DeclareTextFontCommand{\textcyr}{\cyr}
\newtheorem{theorem}{Theorem}[section]
\newtheorem{prop}[theorem]{Proposition}
\newtheorem{corr}[theorem]{Corollary}
\theoremstyle{definition}
\newtheorem{deff}[theorem]{Definition}
\theoremstyle{remark}
\newtheorem{comm}[theorem]{Remark}
\newcommand{\R}{\mathbb R}
\newcommand{\C}{\mathbb C}
\newcommand{\p}{\partial}
\newcommand{\z}{\bar z}
\newcommand{\dbar}{\bar\partial}
\newcommand{\bc}{\mathbb{B}}
\newcommand{\holb}{Hol(D,\bc)}
\newcommand{\hpb}{H^p(D,\bc)}
\newcommand{\hab}{H_{A,B}(D,\bc)}
\newcommand{\hpab}{H^p_{A,B}(D,\bc)}
\newcommand{\hw}{H_w(D, \bc)}
\newcommand{\sca}{\text{Sc}\,}
\newcommand{\vect}{\text{Vec}\,}
\title{Bicomplex Hardy Classes of Solutions to Higher-Order Vekua
Equations}
\author{William L. Blair}
\address{Department of Mathematics\\
  The University of Texas at Tyler\\
  Tyler, TX 75799}
\email{wblair@uttyler.edu}
\keywords{Hardy spaces, bicomplex numbers, Vekua equation, boundary value in the sense of distributions, nonhomogeneous Cauchy-Riemann equation, meta-analytic function}
\subjclass[2010]{30H10, 30G20, 30G30, 30J99, 46F20}
\begin{document}

\begin{abstract}
    We extend representation formulas for functions in Hardy classes of solutions to higher-order iterated Vekua equations to Hardy classes of bicomplex-valued functions that solve a bicomplex version of the Vekua equation or its higher-order generalizations. Using these representations, we show that functions in these bicomplex-valued Hardy classes have nontangential boundary values and boundary values in the sense of distributions. 
\end{abstract}

\maketitle

\section{Introduction}

In this paper, we extend the theory of Hardy spaces to solutions of the bicomplex Vekua equation.

The Vekua equation is a well-studied first-order complex partial differential equation that generalizes the classical Cauchy-Riemann equation. It is an example of a nonhomogeneous Cauchy-Riemann equation, i.e., right-hand side of the equation is not identically equal to zero, where in this specific case, the right-hand side is a linear combination of the solution function and its complex conjugate with coefficients that are complex-valued functions (usually elements of a Lebesgue space). The Vekua equation is interesting because it is the model case for first-order elliptic partial differential equations in planar domains and arises in the study of infinitesimal bending of surfaces. The solutions of the Vekua equation are called generalized analytic functions (as named by I. N. Vekua \cite{Vek}) and pseudoanalytic functions (as named by L. Bers \cite{Bers}). From the point of view of complex function theory, these functions are a fascinating generalization of holomorphic functions. It is well known that on bounded domains generalized analytic functions have the form of a product of a holomorphic function and a function bounded in modulus away from zero. Consequently, many properties of holomorphic functions, in particular those that reply on size arguments or behavior of zero sets, immediately extend to generalized analytic functions. This is clear when one considers boundary behavior. 

The study of holomorphic functions on planar domains with boundary values centers on the study of Hardy spaces. The classical holomorphic Hardy spaces of functions on the complex unit disk are the holomorphic functions with bounded mean modulus, raised to a positive real power, on concentric circles centered at the origin and contained in the unit disk. It is a classical result that those functions converge along paths contained in nontangential approach regions to Lebesgue space functions on the unit circle, see \cite{Duren, Koosis, Rep, RudinCR, BAF}, and the functions converge to these boundary values in the corresponding $L^p$ norm. Also, since the Lebesgue spaces, for small Lebesgue exponent $p < 1$, are not as useful as those with exponent $p \geq 1$, it is desirable to have a replacement for nontangential boundary values in the small exponent case. One generalization of the boundary value for functions in the disk is the boundary value in the sense of distributions (or distributional boundary value). The holomorphic Hardy space functions were shown to have distributional boundary values by G. Hoepfner and J. Hounie in \cite{GHJH2} (see also E. Straube's paper \cite{Straube}).

In \cite{KlimBook}, S. Klimentov proved that generalized analytic functions which satisfy the same growth condition imposed on functions in the classic holomorphic Hardy spaces also have nontangential boundary values that are $L^p$ functions on the circle. Also, S. Berhanu and J. Hounie, in \cite{BerHou}, showed that generalized analytic functions with distributional boundary values that are in $L^p$ on the circle are elements of the Vekua-Hardy spaces as considered by Klimentov. In \cite{WBD}, the author and B. B. Delgado showed that this behavior characterizes the functions in the Vekua-Hardy spaces. 

Another natural extension of holomorphic functions are the polyanalytic functions. These functions are the polynomials in the complex conjugate of a complex variable with holomorphic coefficients, and they solve the higher-order generalization of the Cauchy-Riemann equation where the Cauchy-Riemann operator is iterated one more time than the degree of the polynomial. In \cite{polyhardy}, polyanalytic functions on the unit disk satisfying a higher-order analogue of the Hardy space size condition are shown to inherit the boundary behavior of the classic Hardy spaces, specifically they have nontangential $L^p$ boundary values and the functions converge to those boundary values in the $L^p$ norm. 

Motivated by the results above (and improvements in \cite{metahardy, WB3}), the author and B. B. Delgado in \cite{WBD} proved the Hardy space boundary behavior extends to solutions of higher-order iterated Vekua equations, i.e., solutions of a differential equation similar to the one solved by polyanalytic functions where the Cauchy-Riemann operator is replaced by one corresponding to the Vekua equation have $L^p$ nontangential boundary values, the functions converge to those boundary values in the associated $L^p$ norm, and these functions have distributional boundary values. This generalized, combined, and extended the work found in \cite{KlimBook, BerHou, polyhardy, metahardy, WB3}. Functions solving higher-order iterated Vekua equations were considered previously in \cite{itvek, itvekbvp, Pascali}. 

In the continuing pursuit of generalizations of the complex numbers, the bicomplex numbers have recently produced much interest. The bicomplex numbers are essentially $\mathbb{C}^2$ with a commutative multiplication, see \cite{PriceMulti}, that produces desirable algebraic properties. One direction where bicomplex numbers naturally arise is in the study of the stationary Schr\"odinger equation. The differential operator associated with the stationary Schr\"odinger equation may be factorized into two first-order differential operators where one of them is a bicomplex version of the operator associated with the first-order Vekua equation, see \cite{ComplexSchr, CastaKrav, FundBicomplex}. This realization stimulates an interest in understanding solutions to this bicomplex Vekua-type equation. Note, the functions under consideration here are functions of a complex variable taking values in the bicomplex numbers. Recently, Bergman spaces of solutions to the bicomplex Vekua equation that are also in Lebesgue spaces defined for bicomplex-valued functions have been considered by V. Vicente-Ben\'itez in \cite{BCTransmutation,BCBergman}. Similar spaces in a Clifford analysis setting are considered by B. B. Delgado in \cite{Hodge} and for functions of a bicomplex variable in \cite{BCBergmanBloch, BCWeightedBergman}.

We work to develop Hardy classes for the solutions of the bicomplex Vekua equation considered in \cite{BCTransmutation,BCBergman}. Also, we develop Hardy classes for the higher-order generalizations of these Vekua equations of the form considered in \cite{WBD}. After defining these Hardy classes of solutions to the bicomplex Vekua equation and its higher-order variants, we show that functions in these classes exhibit the boundary behavior expected of functions from a Hardy class, e.g., the functions have nontangential boundary values, the functions converge to these nontangential boundary values in the associated $L^p$ norm, and these functions have distributional boundary values. This directly extends the results from \cite{WBD} into the bicomplex setting and furthers the study of generalizations of the Hardy spaces. Note, Hardy classes of bicomplex-valued functions of a single complex variable were previously considered in \cite{BCAtomic}. 

We provide the outline for the paper. In Section \ref{background}, we provide background for holomorphic Hardy spaces, solutions of nonhomogeneous Cauchy-Riemann equations (including the Vekua equation), generalizations of the Hardy spaces associated with solutions of nonhomogeneous Cauchy-Riemann equations (in particular previous results for the Vekua-Hardy spaces), and bicomplex numbers necessary for the remainder of the paper. In Section \ref{bcholohardyspaces}, we provide background concerning the bicomplex-holomorphic Hardy spaces. These were previously considered in \cite{BCAtomic}, and  for completeness, we include the results from that paper which will be referenced. We also prove a new theorem for the bicomplex-holomorphic Hardy spaces that generalizes a theorem from \cite{GHJH2} that relates boundary values in the sense of distributions, Poisson-type integral representation, and growth near the boundary. In Section \ref{BCVekuaHardyspaces}, the bicomplex-Vekua-Hardy spaces are defined, representation and inclusion results are proved and are used, along with results about the generalized complex-valued Hardy class functions considered in \cite{WB, BCAtomic}, to show that functions in the bicomplex-Vekua-Hardy spaces have nontangential boundary values in $L^p$ and converge to their nontangential boundary values in the associated $L^p$ norm. Also, certain conditions are shown to be sufficient for solutions to bicomplex-Vekua equations to have distributional boundary values. In Section \ref{BCPolyHardyspaces}, bicomplex-valued polyanalytic functions are considered along with their corresponding Hardy spaces. In particular, representation and inclusion results mirroring those of complex-valued polyanalytic functions are proved and are used to prove representation theorems for the associated poly-Hardy spaces that extend similar results from the complex-valued setting of \cite{polyhardy}. Also, existence of $L^p$ nontangential boundary values and convergence in the $L^p$ norm is shown. In Section \ref{BCHOIVHardyspaces}, we define the bicomplex-HOIV Vekua equations, their solutions, and the associated Hardy classes. We prove representation results for these functions that directly extend those from the complex-setting in \cite{WBD}, existence of their $L^p$ nontangential boundary values, and that the functions converge to these nontangential boundary values in the associated $L^p$ norm, as well as sufficient conditions for these functions to have distributional boundary values.

\section{Background}\label{background}

We begin this section with a brief description of notation. We work exclusively on the domain $D$ which we take to be the disk of radius one centered at the origin in the complex plane. By $L^p(S)$, we denote the complex-valued functions defined on a set $S$ with integrable modulus raised to the $p^{\text{th}}$-power. We represent the space of distributions on the boundary of the unit disk $\p D$ by $\mathcal{D}'(\p D)$. Finally, in an effort to disambiguate between other notions of conjugation that will arise below, we denote the complex conjugate of $z \in \C$ by $z^*$, i.e., if $z = x + iy$, where $x, y \in \R$, then $z^* = x - iy$. 

\subsection{Holomorphic Hardy Spaces}\label{holohardy}

We recall facts about the classic holomorphic Hardy spaces that we generalize. 

\begin{deff}
    We define $Hol(D)$ to be the space of functions $f: D \to \C$ such that 
    \[
        \frac{\p f}{\p z^*} = 0. 
    \]
\end{deff}

\begin{deff}
    For $0 < p < \infty$, we define $H^p(D)$ to be the space of functions $f \in Hol(D)$ such that 
    \[
        ||f||_{H^p} := \sup_{0 < r < 1} \left( \int_0^{2\pi} |f(re^{i\theta})|^p \,d\theta \right)^{1/p} < \infty.
    \]
\end{deff}

\begin{theorem}[\cite{Duren, Koosis, Rep, RudinCR, BAF}]\label{bvcon}
A function $f \in H^p(D)$, $0 < p < \infty$, has nontangential boundary values $f_{nt} \in L^p(\partial D)$ at almost every point of $\p D$, 
\[
\lim_{r\nearrow 1} \int_0^{2\pi} |f(re^{i\theta})|^p \, d\theta = \int_0^{2\pi} |f_{nt}(e^{i\theta})|^p \,d\theta,
\]
and
\[
\lim_{r\nearrow 1} \int_0^{2\pi} |f(re^{i\theta})- f_{nt}(e^{i\theta})|^p \, d\theta = 0.
\]
\end{theorem}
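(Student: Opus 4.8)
This result is classical, so in practice it would be cited rather than reproved; but here is the route I would follow. The plan is to reduce everything to the Hilbert-space case $p=2$ by Blaschke factorization and then transfer the conclusions back. Assume $f\not\equiv 0$. The first step is to extract the zeros: listing the zeros $\{a_n\}$ of $f$ in $D$ with multiplicity, Jensen's formula on the circle $|z|=r$ together with the elementary inequality $\log^+ t\le t^p/p$ gives $\sum_{|a_n|<r}\log(r/|a_n|)\le (2\pi p)^{-1}\|f\|_{H^p}^p-\log|f(0)|$ (after dividing out any zero at the origin), uniformly in $r$; letting $r\nearrow 1$ yields the Blaschke condition $\sum_n(1-|a_n|)<\infty$. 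I would then form the Blaschke product $B$ with zero set $\{a_n\}$ and set $g:=f/B$. Since $|f/B_N|^p$ is subharmonic for each partial product $B_N$ and $|B_N|\to 1$ uniformly on compacta, a maximum-principle argument gives $\|f/B_N\|_{H^p}\le\|f\|_{H^p}$, and letting $N\to\infty$ shows $g\in H^p(D)$ is zero-free with $\|g\|_{H^p}=\|f\|_{H^p}$.

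Because $g$ is zero-free on the simply connected disk, a holomorphic branch of $h:=g^{p/2}$ exists, and $|h|^2=|g|^p$ shows $h\in H^2(D)$. For $h(z)=\sum_{k\ge 0}c_k z^k\in H^2(D)$, Parseval gives $\int_0^{2\pi}|h(re^{i\theta})|^2\,d\theta=2\pi\sum_k|c_k|^2 r^{2k}$, so the Hardy bound forces $\sum_k|c_k|^2<\infty$; set $h_{nt}(e^{i\theta}):=\sum_k c_k e^{ik\theta}\in L^2(\partial D)$. Since the Poisson kernel has Fourier coefficients $r^{|k|}$, the function $h$ is exactly the Poisson integral of $h_{nt}$, so Fatou's theorem on nontangential limits of Poisson integrals shows $h$ has nontangential boundary value $h_{nt}$ a.e. on $\partial D$. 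Moreover $\int_0^{2\pi}|h(re^{i\theta})-h_{nt}(e^{i\theta})|^2\,d\theta=2\pi\sum_k|c_k|^2(1-r^k)^2\to 0$ as $r\nearrow 1$, which also gives $\int|h(re^{i\theta})|^2\,d\theta\to\int|h_{nt}|^2\,d\theta$.

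Now transfer back. From the $H^2$ case, $g=h^{2/p}$ has nontangential boundary value $g_{nt}:=h_{nt}^{2/p}\in L^p(\partial D)$ a.e.; the bounded holomorphic function $B$ has nontangential boundary values a.e. (same reasoning, or Fatou's theorem for bounded harmonic functions) with $|B_{nt}|=1$ a.e., for instance because $\int\log|B(re^{i\theta})|\,d\theta\to 0$. Hence $f$ has nontangential boundary value $f_{nt}:=B_{nt}g_{nt}\in L^p(\partial D)$ a.e. For the integral convergence, the $H^2$ case applied to $h$ gives $\int_0^{2\pi}|g(re^{i\theta})|^p\,d\theta\to\int_0^{2\pi}|g_{nt}(e^{i\theta})|^p\,d\theta$; since $|f(re^{i\theta})|^p=|B(re^{i\theta})|^p|g(re^{i\theta})|^p\le|g(re^{i\theta})|^p$, with the dominating functions $|g(re^{i\theta})|^p$ converging to $|g_{nt}|^p$ both a.e. and in $L^1$, the generalized dominated convergence theorem yields $\int_0^{2\pi}|f(re^{i\theta})|^p\,d\theta\to\int_0^{2\pi}|f_{nt}(e^{i\theta})|^p\,d\theta$. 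Finally, a.e. convergence $f(re^{i\theta})\to f_{nt}(e^{i\theta})$ together with convergence of $\int|f_r|^p$ to $\int|f_{nt}|^p<\infty$ forces $\int_0^{2\pi}|f(re^{i\theta})-f_{nt}(e^{i\theta})|^p\,d\theta\to 0$ by the Riesz--Scheff\'e lemma (valid for every $p>0$ via $|a-b|^p\le 2^p(|a|^p+|b|^p)$ and one more application of the generalized dominated convergence theorem).

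The \textbf{main obstacle} is the first step: establishing the Blaschke condition and, more delicately, the norm identity $\|f/B\|_{H^p}=\|f\|_{H^p}$, which needs Jensen's formula plus a careful subharmonicity/maximum-principle argument handling the partial products and their convergence. The other nontrivial input is Fatou's theorem on nontangential limits of Poisson integrals, which is a standard Hardy--Littlewood maximal function estimate; everything else is bookkeeping once those two facts are available. (For $p\ge 1$ one can bypass the factorization for the mere existence of $f_{nt}$ by extracting a weak-$*$ limit of $\{f(re^{i\cdot})\}$ in $L^p$ and identifying $f$ with its Poisson integral, but the factorization argument is genuinely needed for $0<p<1$ and works uniformly in $p$.)
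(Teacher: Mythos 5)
The paper does not prove this statement; it is quoted as a classical background result with citations to \cite{Duren, Koosis, Rep, RudinCR, BAF}, and your proposal is precisely the standard Blaschke-factorization argument given in those references (extract the zeros via Jensen's formula, divide by the Blaschke product, pass to $h=g^{p/2}\in H^2$, use Parseval and Fatou, and transfer back with the generalized dominated convergence theorem). The sketch is sound; the only point glossed over is that the nontangential convergence of $g=h^{2/p}$ is deduced from that of $h$ by choosing a continuous branch near each boundary point where $h_{nt}\neq 0$ (which is almost every point, since $\log|h_{nt}|\in L^1(\partial D)$), but this is standard.
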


As the Lebesgue spaces for $p$-values satisfying $0 < p < 1$ are not nearly so useful as those satisfying $p \geq 1$, we consider a more general kind of boundary value for the functions in $H^p(D)$ spaces. We define the boundary value in the sense of distributions and recall, after the definition, results of Hoepfner and Hounie that connect distributional boundary values and the holomorphic Hardy spaces.

\begin{deff}\label{bvcircle}Let $f: D \to \C$. We say that $f$ has a boundary value in the sense of distributions, denoted by $f_b \in \mathcal{D}'(\p D)$, if, for every $\varphi \in C^\infty(\partial D)$, the limit
            \[
             \langle f_b, \varphi \rangle := \lim_{r \nearrow 1} \int_0^{2\pi} f(re^{i\theta}) \, \varphi(\theta) \,d\theta
            \]
            exists.
            
\end{deff}

\begin{theorem}[Theorem 3.1 \cite{GHJH2}]\label{GHJH23point1}
For $f \in Hol(D)$, the following are equivalent:
\begin{enumerate}
    \item For every $\phi \in C^\infty(\p D)$, there exists the limit
    \[
             \langle f_b, \phi \rangle := \lim_{r \nearrow 1} \int_0^{2\pi} f(re^{i\theta}) \, \phi(\theta) \,d\theta.
            \]

    \item There is a distribution $f_b \in \mathcal{D}'(\p D)$ such that $f$ is the Poisson integral of $f_b$
    \[
             f(re^{i\theta}) = \frac{1}{2\pi}\langle f_b, P_r(\theta - \cdot) \rangle,
            \]
    where 
    \[
        P_r(\theta) = \frac{1-r^2}{1-2r\cos(\theta) +r^2}
    \]
    is the Poisson kernel on $D$. 

    \item There are constants $C>0$, $\alpha \geq 0$, such that 
    \[
        |f(re^{i\theta})| \leq \frac{C}{(1-r)^\alpha},
    \]
    for $0 \leq r < 1$.
\end{enumerate}
\end{theorem}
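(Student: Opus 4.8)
The plan is to prove the cyclic chain of implications $(1)\Rightarrow(2)\Rightarrow(3)\Rightarrow(1)$, using throughout that a function $f\in Hol(D)$ has a power series expansion $f(z)=\sum_{n\ge 0}a_nz^n$ on $D$, so that for each fixed $r<1$ the slice $\theta\mapsto f(re^{i\theta})$ is a smooth function on $\p D$ whose Fourier coefficients are $a_nr^n$ for $n\ge 0$ and $0$ for $n<0$. The standard facts about the Poisson kernel I will invoke are its Fourier expansion $P_r(\theta)=\sum_{n\in\Z}r^{|n|}e^{in\theta}$, the reproducing identity $f(\rho r e^{i\theta})=\frac{1}{2\pi}\int_0^{2\pi}f(re^{i\psi})P_\rho(\theta-\psi)\,d\psi$ valid for $0\le \rho<1$ (which holds because the real and imaginary parts of $f$ are harmonic on the closed disk of radius $r$), and the derivative estimates $\|\partial_\theta^{\,j}P_r\|_{L^\infty(\p D)}\le C_j(1-r)^{-j-1}$.

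For $(1)\Rightarrow(2)$: condition (1) asserts that the continuous linear functionals $T_r\colon \phi\mapsto\int_0^{2\pi}f(re^{i\theta})\phi(\theta)\,d\theta$ on the Fr\'echet space $C^\infty(\p D)$ converge pointwise as $r\nearrow 1$; since $C^\infty(\p D)$ is barrelled, the Banach--Steinhaus theorem guarantees that the limit functional $\phi\mapsto\langle f_b,\phi\rangle$ is continuous, hence a distribution $f_b\in\dc$. To identify $f$ as the Poisson integral of $f_b$, fix $0\le\rho<1$ and $\theta$ and apply (1) to the fixed, smooth test function $\psi\mapsto P_\rho(\theta-\psi)$: by the reproducing identity, $f(\rho re^{i\theta})=\frac{1}{2\pi}\int_0^{2\pi}f(re^{i\psi})P_\rho(\theta-\psi)\,d\psi\to\frac{1}{2\pi}\langle f_b,P_\rho(\theta-\cdot)\rangle$ as $r\nearrow 1$, while the left-hand side tends to $f(\rho e^{i\theta})$ by continuity of $f$ on $D$. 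This is exactly statement (2).

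For $(2)\Rightarrow(3)$: a distribution on the compact manifold $\p D$ has finite order, so there are $C>0$ and $k\in\N$ with $|\langle f_b,\psi\rangle|\le C\sum_{j=0}^k\|\partial_\theta^{\,j}\psi\|_{L^\infty(\p D)}$ for every $\psi\in C^\infty(\p D)$. Taking $\psi=P_r(\theta-\cdot)$ and using the Poisson-kernel derivative estimates gives $|f(re^{i\theta})|=\frac{1}{2\pi}|\langle f_b,P_r(\theta-\cdot)\rangle|\le C'\sum_{j=0}^k(1-r)^{-j-1}\le C''(1-r)^{-(k+1)}$, i.e.\ (3) with $\alpha=k+1$.

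For $(3)\Rightarrow(1)$: the Cauchy integral formula and the growth bound give, for $n\ge 0$ and $0<r<1$, $|a_n|=\bigl|\frac{1}{2\pi r^{n}}\int_0^{2\pi}f(re^{i\theta})e^{-in\theta}\,d\theta\bigr|\le C\,r^{-n}(1-r)^{-\alpha}$; choosing $r=1-\frac1n$ (for $n\ge 2$) forces $|a_n|\le C'n^{\alpha}$, so the Taylor coefficients grow at most polynomially. For $\phi\in C^\infty(\p D)$, expanding the series and integrating term by term yields $\int_0^{2\pi}f(re^{i\theta})\phi(\theta)\,d\theta=\sum_{n\ge 0}a_nr^{n}\widehat\phi(-n)$, where $\widehat\phi(m)=\int_0^{2\pi}\phi(\theta)e^{-im\theta}\,d\theta$; since $\phi$ is smooth, $|\widehat\phi(-n)|$ decays faster than any power of $n$ while $|a_n|\les n^\alpha$, so the series converges absolutely and uniformly in $r\in[0,1]$, and the limit as $r\nearrow 1$ exists and equals $\sum_{n\ge 0}a_n\widehat\phi(-n)$. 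This is (1), closing the cycle. The computational heart of the argument is the Cauchy estimate for the Taylor coefficients together with the Poisson-kernel derivative bounds, both classical; the one step I expect to require genuine care is the upgrade, in $(1)\Rightarrow(2)$, of the a priori merely linear limit functional in (1) to an honest distribution, since without invoking the uniform boundedness principle one cannot conclude continuity, and continuity is what makes the Poisson-integral representation meaningful.
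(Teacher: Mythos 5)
This statement is quoted in the paper as Theorem 3.1 of \cite{GHJH2} and is not proved there, so there is no internal proof to compare against. Your argument is correct and is essentially the standard proof of that classical result: Banach--Steinhaus on the Fr\'echet space $C^\infty(\p D)$ upgrades the limit functional in (1) to a genuine distribution, the reproducing property of the Poisson kernel on subdisks yields (2), the finite order of distributions on a compact manifold combined with the bound $\|\partial_\theta^{\,j}P_r\|_{L^\infty}\lesssim (1-r)^{-j-1}$ yields (3), and the Cauchy estimates with $r=1-\tfrac1n$ give polynomial growth of the Taylor coefficients, which against rapidly decaying Fourier coefficients of a smooth test function closes the cycle. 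All steps check out.
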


\begin{theorem}[Corollary 3.1 \cite{GHJH2}]\label{GHJH23point1corr}
The functions in $H^p(D)$, $0 < p < \infty$, satisfy (3) in Theorem \ref{GHJH23point1}
\end{theorem}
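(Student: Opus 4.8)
The plan is to deduce the growth bound in statement (3) of Theorem \ref{GHJH23point1} directly from the defining size condition of $H^p(D)$, using the subharmonicity of $|f|^p$ and the sub-mean-value inequality on a disk whose radius is comparable to the distance to $\p D$; this is the classical argument (cf.\ \cite{Duren}). Fix $f \in H^p(D)$ with $0 < p < \infty$. Since $f$ is holomorphic, $\log|f|$ is subharmonic on $D$, and therefore $|f|^p = \exp(p\log|f|)$ is subharmonic on $D$ for \emph{every} $p > 0$, being the composition of the convex increasing function $t \mapsto e^{pt}$ with a subharmonic function (the isolated zeros of $f$ cause no difficulty).

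Now fix $z = re^{i\theta}$ with $0 \le r < 1$ and apply the sub-mean-value property over the disk $\Delta := \{\, w : |w-z| < (1-r)/2 \,\} \subset D$:
\[
|f(z)|^p \le \frac{1}{\pi\big((1-r)/2\big)^2} \int_\Delta |f(w)|^p \, dA(w).
\]
Every $w \in \Delta$ satisfies $|w| < (1+r)/2 < 1$, so the integral over $\Delta$ is at most the integral over $\{\,|w| < (1+r)/2\,\}$; passing to polar coordinates and invoking the definition of $\|f\|_{H^p}$ to bound each circular integral,
\[
\int_\Delta |f(w)|^p \, dA(w) \le \int_0^{(1+r)/2}\!\!\left( \int_0^{2\pi} |f(\rho e^{i\phi})|^p \, d\phi \right)\! \rho \, d\rho \le \|f\|_{H^p}^p \int_0^{(1+r)/2}\!\! \rho \, d\rho \le \tfrac{1}{2}\|f\|_{H^p}^p .
\]
Combining the two displays gives
\[
|f(z)|^p \le \frac{2\,\|f\|_{H^p}^p}{\pi(1-r)^2},
\]
so statement (3) holds with $\alpha = 2/p$ and $C = (2/\pi)^{1/p}\,\|f\|_{H^p}$, uniformly in $\theta$.

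I do not anticipate a genuine obstacle: the only point that needs a moment's care is the subharmonicity of $|f|^p$ for small exponents $p < 1$ (where $|f|^p$ is not a convex function of $|f|$), which is exactly why I route through $\log|f|$ rather than through $|f|$ itself; the rest is a routine mean-value estimate. One could equally cite the standard fact that $\|f\|_{H^p} = \lim_{r\nearrow 1}\big(\int_0^{2\pi}|f(re^{i\theta})|^p\,d\theta\big)^{1/p}$ and that this limit controls all intermediate circular means, but the elementary area argument above is self-contained and suffices.
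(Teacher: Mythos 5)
Your proof is correct, and since the paper imports this statement without proof (as Corollary~3.1 of \cite{GHJH2}), your self-contained argument---subharmonicity of $|f|^p$ for every $p>0$ via $\log|f|$, followed by the areal sub-mean-value inequality on the disk of radius $(1-r)/2$ about $re^{i\theta}$---is exactly the classical route behind that citation. The only remark worth adding is that the sharper standard estimate gives $\alpha = 1/p$ (using circular rather than areal means), but statement (3) only asks for some finite $\alpha$, so your $\alpha = 2/p$ is entirely sufficient.
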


We note here that distributional boundary values of holomorphic (and harmonic) functions were also considered previously in \cite{Straube}.

Next, we describe an integral operator and include associated results developed by Vekua \cite{Vek} to study solutions of nonhomogeneous Cauchy-Riemann equations in the complex setting. 

\begin{deff}\label{vekopondisk}
For $f: D \to \mathbb{C}$ and $z  \in \mathbb{C}$, we denote by $T(\cdot)$ the integral operator defined by 
\[
T(f)(z) = -\frac{1}{\pi} \iint_D \frac{f(\zeta)}{\zeta - z}\, d\xi\,d\eta,
\]
whenever the integral is defined.
\end{deff}

\begin{theorem}[\cite{BegBook}, Theorems 1.19 and 1.26 \cite{Vek}, Theorems 1.4.7 and 1.4.8 \cite{KlimBook}]\label{VekKlimThmcombined}
For $f \in L^1(D)$, 
\[
    \frac{\p}{\p z^*} T(f) = f.
\]
If $f \in L^q(D)$, $1 \leq q \leq 2$, then $T(f) \in L^\gamma(D)$, where $1 < \gamma < \frac{2q}{2-q}$, $T(f)|_{\p D(0,r)} \in L^\mu(\p D(0,r))$, 
\[
||Tf||_{L^\mu(\p D(0,r))} \leq C ||f||_{L^q(D)},
\]
$0 < r \leq 1$, and 
\[
\lim_{r\nearrow 1} \int_0^{2\pi} |T(f)(e^{i\theta}) - T(f)(re^{i\theta})|^\mu \,d\theta = 0, 
\]
where $\mu$ satisfies $1 < \mu < \frac{q}{2-q}$ and $C$ is a constant that does not depend on $r$ or $f$. For $f \in L^q(D)$, $q > 2$, $T(f) \in C^{0,\alpha}(\overline{D})$, with $\alpha = \frac{ q-2}{q}$. 
\end{theorem}

\subsection{Bicomplex Numbers}

We now work to describe the bicomplex numbers and provide sufficient background for the results in the proceeding sections. We use the notation and terminology from \cite{BCTransmutation, BCBergman, BCAtomic}. For additional material and background on the bicomplex numbers and functions taking bicomplex values, see \cite{FundBicomplex, BicomplexHilbert, KravAPFT, BCHolo, ComplexSchr}.

The bicomplex numbers $\bc$, as a set, can be described as $\C^2$. Letting $\bc$ inherit the usual addition and multiplication by complex scalars, then with the multiplication defined by 
\[
    (z_1, z_2)(w_1,w_2) = (z_1w_1 - z_2w_2, z_1w_2 + z_2 w_1),
\]
for any $(z_1,z_2), (w_1,w_2) \in \C^2$, $\bc$ is a commutative algebra over $\C$. Identifying elements of the form $(a,0)$ with the respective complex number $a$ and defining $j = (0,1)$, we can represent any $(z_1, z_2) \in \bc$ with $z_1 + j z_2$, where $j^2 = -1$ and $ij = ji$. 
Note, the elements 
\[
    p^{\pm} := \frac{1}{2}(1\pm ij),
\]
are idempotent and satisfy
\[
    p^+ p^- = 0.
\]
So, $\bc$ has nonzero zero divisors.

\begin{deff}
    We define the scalar part of $z = z_1 + jz_2 \in \bc$ as 
    \[
        \sca z = z_1,
    \]
    and the vector part of $z = z_1 + jz_2 \in \bc$ as 
    \[
        \vect z = z_2.
    \]
\end{deff}

\begin{deff}
    We define the bicomplex conjugate of $z = z_1 + jz_2 \in \bc$ as 
    \[
        \overline{z} := z_1 - jz_2.
    \]
\end{deff}
 
    Since there is the opportunity for confusion between complex number conjugation and bicomplex number conjugation, we denote complex conjugation for $u \in \mathbb{C}$ by $u^*$. Hence, complex-valued holomorphic functions will be those $f: D \to \C$ such that 
    \[
        \frac{\p f}{\p z^*} = 0.
    \]

\begin{prop}[Proposition 1 \cite{BCTransmutation}]\label{everybchasplusandminus}
    Let $w \in \bc$. There exist unique $w^{\pm} \in \C$ such that 
    \[
        w = p^+ w^+ + p^- w^-.
    \]
    Furthermore, 
    \[
    w^{\pm} = \sca w \mp i \vect w.
    \]
\end{prop}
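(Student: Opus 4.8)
The plan is to exploit the idempotent decomposition $\bc = p^+\C \oplus p^-\C$ directly, using the two elementary facts already recorded: $p^+p^- = 0$ together with $(p^\pm)^2 = p^\pm$ (idempotency), and the observation that $1 = p^+ + p^-$ (immediate from the definition $p^\pm = \tfrac12(1 \pm ij)$). First I would verify existence: given $w = w_1 + jw_2 \in \bc$, I claim the scalars $w^{\pm} := \sca w \mp i\,\vect w = w_1 \mp i w_2 \in \C$ work. Indeed, expanding $p^+ w^+ + p^- w^- = \tfrac12(1+ij)(w_1 - iw_2) + \tfrac12(1-ij)(w_1 + iw_2)$ and using only the commuting relation $ij = ji$ and $i^2 = j^2 = -1$, the terms regroup as $w_1 + jw_2 = w$; this is a short direct computation, so I would not belabor it.

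For uniqueness, suppose $p^+ a^+ + p^- a^- = p^+ b^+ + p^- b^-$ with $a^\pm, b^\pm \in \C$, i.e.\ $p^+(a^+ - b^+) + p^-(a^- - b^-) = 0$. Multiplying this identity by $p^+$ and invoking $p^+p^- = 0$ and $(p^+)^2 = p^+$ yields $p^+(a^+ - b^+) = 0$; multiplying by $p^-$ analogously gives $p^-(a^- - b^-) = 0$. It then remains to note that $p^+ c = 0$ for $c \in \C$ forces $c = 0$: writing $p^+ c = \tfrac12(c + ijc)$, and identifying $\bc$ with $\C^2$ as $(c, ic)/2$, this vanishes only when $c = 0$ (alternatively, apply the scalar-part map $\sca$, which sends $p^+ c \mapsto \tfrac12 c$). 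Hence $a^+ = b^+$ and $a^- = b^-$, proving uniqueness; since the existence argument produced the explicit values $\sca w \mp i\,\vect w$, the ``furthermore'' clause is automatic.

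The only mild subtlety — the step I would be most careful about — is justifying that $p^+ c = 0 \Rightarrow c = 0$ for complex scalars $c$, since $\bc$ genuinely has zero divisors and one must not accidentally ``cancel'' $p^+$. The clean way is not cancellation at all but extracting a scalar invariant: apply the $\C$-linear scalar-part functional $\sca(\cdot)$, under which $p^+ = \tfrac12(1 + ij)$ has $\sca(p^+) = \tfrac12$, so $\sca(p^+ c) = \tfrac12 c$, and vanishing of $p^+c$ forces $c = 0$. Everything else is bookkeeping with the defining multiplication, so I expect the whole argument to be three or four lines once this point is isolated.
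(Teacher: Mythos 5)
Your proof is correct and complete. Note that the paper does not prove this statement at all --- it is imported verbatim as Proposition 1 of the cited reference \cite{BCTransmutation} --- so there is no internal proof to compare against; your argument (direct expansion for existence, multiplication by the idempotents plus the scalar-part functional $\sca$ to rule out the zero-divisor trap for uniqueness) is the standard one, and you correctly isolate the only point needing care, namely that $p^+c=0$ for $c\in\C$ forces $c=0$.
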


\begin{deff}
    For $w \in \bc$, we define the bicomplex norm of $w$, denoted by $||\cdot||_{\bc}$,  as
    \[
        ||w||_{\bc} := \sqrt{\frac{|w^+|^2 + |w^-|^2}{2}},
    \]
    where $|w^{\pm}|$ is the usual complex modulus. 
\end{deff}

It is immediate from the definition of $||\cdot||_{\bc}$ that for $w = p^+ w^+ + p^- w^- \in \bc$, 
    \begin{equation}\label{bcbasicestimates}
        \frac{1}{\sqrt{2}} |w^\pm| \leq ||w||_{\bc} \leq \frac{1}{\sqrt{2}}\left( |w^+| + |w^-|\right).
    \end{equation}
Also, for $w,v \in \bc$, we have
    \begin{equation}\label{stareqn}
        ||wv||_\bc \leq \sqrt{2} \, ||w||_\bc \, ||v||_\bc.
    \end{equation}
We use these basic estimates throughout. Next, we define the bicomplex Lebesgue spaces that we use and recall a proposition that relates them to the familiar complex-valued Lebesgue spaces.

\begin{deff}
    For $0 < p < \infty$, we define the bicomplex Lebesgue space $L^p(D, \bc)$ to be the set of functions $f: D \to \bc$ such that 
    \[  
        ||f||_{L^p(D,\bc)}:= \left( \iint_D ||f(z)||^p_{\bc} \,dx\,dy \right)^{1/p} < \infty.
    \]
    Similarly, we define $L^p(\p D, \bc)$ to be the set of functions $g: \p D \to \C$ such that 
    \[
        ||g||_{L^p(\p D,\bc)}:= \left( \int_{\p D} ||g(z)||^p_{\bc} \,d\sigma(z) \right)^{1/p} < \infty,
    \]
    where $\sigma$ is Lebesgue surface measure on $\p D$. 
\end{deff}

\begin{prop}[Proposition 2.26 \cite{BCAtomic}]\label{propLqiff}
    For $p$ a positive real number, a function $f = p^+ f^+ + p^- f^- \in L^p(D, \bc)$ if and only if $f^+, f^- \in L^p(D)$. The analogous result holds for $f \in L^p(\p D, \bc)$.
\end{prop}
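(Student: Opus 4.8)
The plan is to deduce the statement from the pointwise two-sided estimate \eqref{bcbasicestimates}, raised to the $p$-th power, together with the elementary inequality $(a+b)^p \le C_p(a^p+b^p)$ valid for all $a,b\ge 0$, where $C_p = 1$ for $0<p\le 1$ (subadditivity of $t\mapsto t^p$) and $C_p = 2^{p-1}$ for $p\ge 1$ (convexity of $t\mapsto t^p$). Writing $f = p^+ f^+ + p^- f^-$ with $f^\pm = \sca f \mp i\,\vect f$ as in Proposition \ref{everybchasplusandminus}, \eqref{bcbasicestimates} gives, pointwise on $D$,
\[
2^{-p/2}\,|f^\pm(z)|^p \;\le\; ||f(z)||_\bc^p \;\le\; 2^{-p/2}\bigl(|f^+(z)|+|f^-(z)|\bigr)^p \;\le\; 2^{-p/2}C_p\bigl(|f^+(z)|^p + |f^-(z)|^p\bigr),
\]
so that $||f(z)||_\bc^p$ is comparable, with constants depending only on $p$, to $|f^+(z)|^p + |f^-(z)|^p$.

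Before integrating I would observe that $f$ is Lebesgue measurable as a $\bc$-valued function on $D$ if and only if the complex-valued functions $f^+$ and $f^-$ are measurable: the maps $w\mapsto \sca w \mp i\,\vect w$ and $(w^+,w^-)\mapsto p^+w^+ + p^-w^-$ are continuous $\R$-linear bijections between $\bc$ and $\C^2$, so measurability is preserved in both directions. Granting this, integrate the displayed comparison over $D$ against area measure to obtain
\[
2^{-p/2}\iint_D |f^\pm|^p\,dx\,dy \;\le\; \iint_D ||f||_\bc^p\,dx\,dy \;\le\; 2^{-p/2}C_p\left(\iint_D |f^+|^p\,dx\,dy + \iint_D |f^-|^p\,dx\,dy\right).
\]
The left-hand inequality shows that $f\in L^p(D,\bc)$ implies $f^+, f^-\in L^p(D)$, while the right-hand inequality gives the converse; this proves the first claim. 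For the boundary version I would repeat the argument verbatim with $\iint_D(\cdot)\,dx\,dy$ replaced throughout by $\int_{\p D}(\cdot)\,d\sigma$, the pointwise estimate being unchanged.

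I do not expect a serious obstacle: the proof is essentially the observation that the $\bc$-norm is equivalent to an $\ell^p$-type combination of the idempotent components $f^\pm$. The only point needing a little care is the range $0 < p < 1$, where the triangle inequality is unavailable and one must use subadditivity of $t\mapsto t^p$ in place of a homogeneity argument — but \eqref{bcbasicestimates} already holds for every $p>0$, so this is harmless. A second, purely bookkeeping, point is the measurability equivalence noted above, which is immediate from the linear change of coordinates in Proposition \ref{everybchasplusandminus}.
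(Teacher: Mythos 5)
Your proof is correct: the two-sided pointwise comparison $2^{-p/2}|f^\pm|^p \le \|f\|_\bc^p \le 2^{-p/2}C_p(|f^+|^p+|f^-|^p)$ follows directly from \eqref{bcbasicestimates} and the elementary inequality $(a+b)^p \le C_p(a^p+b^p)$, and integrating over $D$ (or $\p D$) gives both implications; the measurability remark is also sound since the idempotent decomposition of Proposition \ref{everybchasplusandminus} is a continuous linear bijection. Note that the paper itself does not reprove this statement --- it is quoted as Proposition 2.26 of \cite{BCAtomic} --- but your argument is the natural one, being essentially the observation that $\|w\|_\bc$ is, up to constants depending only on $p$, the $\ell^p$ combination of the components $w^\pm$.
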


Next, we define the bicomplex differential operators that we use to construct the bicomplex Vekua-type equations that we consider. Solutions of differential equations associated with these differential operators were previously considered in, for example,  \cite{BCTransmutation, BCBergman, BCAtomic}.

\begin{deff}\label{bcdbardef}
    We define the bicomplex differential operators $\p$ and $\dbar$ as 
    \[
        \p := \frac{1}{2} \left( \frac{\p}{\p x} - j \frac{\p }{\p y}\right)
    \]
    and
    \[
        \dbar := \frac{1}{2} \left( \frac{\p}{\p x} + j \frac{\p }{\p y}\right).
    \]
\end{deff}

   It is immediate from the definition of $\p$ and $\dbar$ that 
    \[
        \p:= p^+ \frac{\p}{\p z^*} + p^- \frac{\p }{\p z}
    \]
    and 
    \[
        \dbar := p^+ \frac{\p}{\p z} + p^- \frac{\p }{\p z^*},
    \]
    where $\frac{\p}{\p z}$ and $\frac{\p }{\p z^*}$ are the usual complex partial differential operators. Using the idempotent representation for bicomplex numbers, if $w = p^+ w^+ + p^- w^-$, then $\dbar \, w = 0$ if and only if 
    \[
        \frac{\p w^+}{\p z} = 0 = \frac{\p w^-}{\p z^*}, 
    \]
    i.e., $(w^+)^*,w^-$ are complex-valued holomorphic functions. 

We require an operation and associated symbol that is specific to bicomplex numbers and their association with the complex numbers.

\begin{deff}
    We define the bicomplexification $\widehat{u}$ of a complex number $u = x + iy$ by 
    \[
        \widehat{u} = x + jy. 
    \]
\end{deff}

Using bicomplexification, we are able to transform a complex number into a bicomplex number, or equivalently, we are able to consider the bicomplex numbers with real-valued scalar and vector parts. Observe that, for a complex variable $u$, we have
\begin{align*}
    \dbar \,\widehat{u} = 0,
\quad
    \p \, \widehat{u}^n = n \widehat{u}^{n-1},
\quad
    \p \, \widehat{u^*} = 0,
\quad\text{ and }\quad
    \dbar \,\widehat{u^*}^{n} = n \widehat{u^*}^{n-1}.
\end{align*}
So, polynomials in the bicomplexification of the complex conjugate of a complex variable play the same role with respect to $\dbar$ as polynomials in the complex conjugation of a complex variable with respect to $\frac{\p}{\p z^*}$. 

By appealing to the work of Vekua on the complex $T(\cdot)$, we realize a bicomplex-valued right-inverse operator to $\dbar$. This operator was introduced in \cite{FundBicomplex}. 

\begin{theorem}[\cite{Vek, FundBicomplex, BCTransmutation, BCBergman, conjbel}]\label{Thm: bctoperator}
    For $f \in L^p(D, \bc)$, $p \geq 1$, the bicomplex Theodorescu operator $T_{\bc}(\cdot)$ defined by 
    \[
        T_{\bc} (f)(z) := p^+\left(-\frac{1}{\pi} \iint_D \frac{f^+(\zeta)}{\zeta^* - z^*} \,d\eta\,d\xi \right) + p^-\left(- \frac{1}{\pi}\iint_{D} \frac{ f^-(\zeta)}{\zeta- z}\,d\eta\,d\xi \right)
    \]
    exists and 
    \[
        \dbar \, T_{\bc} (f) = f. 
    \]
\end{theorem}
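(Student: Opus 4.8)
The plan is to reduce the statement to Vekua's theory of the classical complex Theodorescu operator $T$ by exploiting the idempotent decomposition $\bc = p^{+}\C \oplus p^{-}\C$. First I would write $f = p^{+} f^{+} + p^{-} f^{-}$; by Proposition \ref{propLqiff} we have $f^{+},f^{-}\in L^{p}(D)$, and since $D$ is bounded and $p\ge 1$, $f^{\pm}\in L^{1}(D)$, so the classical $T(f^{+})$ and $T(f^{-})$ are defined. The two Cauchy-type kernels $(\zeta-z)^{-1}$ and $(\zeta^{*}-z^{*})^{-1}$ are ordinary complex scalars; by Proposition \ref{everybchasplusandminus} a complex scalar has equal idempotent components and commutes with $p^{\pm}$, so inserting the decomposition of $f$ into the two area integrals splits $T_{\bc} f$ along the idempotents, with each component built out of $T$ and of the conjugated Theodorescu operator $\widetilde{T}(g)(z):=\bigl(T(g^{*})(z)\bigr)^{*}$. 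Since $(\zeta-z)^{*}=\zeta^{*}-z^{*}$, conjugating Vekua's integral gives $\widetilde{T}(g)(z)=-\frac{1}{\pi}\iint_{D}\frac{g(\zeta)}{\zeta^{*}-z^{*}}\,d\xi\,d\eta$, which is exactly the kernel appearing in the second term of $T_{\bc}$. Existence of $T_{\bc} f$ is then immediate from the existence of the classical $T$ on $L^{1}(D)$; if one wants quantitative control one can invoke Theorems \ref{Vekonepointtwentysix} and \ref{onepointnineteen}, but mere existence needs only $f^{\pm}\in L^{1}(D)$.

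For the identity $\dbar T_{\bc} f = f$ I would use the idempotent form $\dbar = p^{+}\frac{\p}{\p z} + p^{-}\frac{\p}{\p z^{*}}$ together with $(p^{\pm})^{2}=p^{\pm}$ and $p^{+}p^{-}=0$: when $\dbar$ is applied to $T_{\bc} f$, its $p^{+}$-part only feels $\frac{\p}{\p z}$ and its $p^{-}$-part only feels $\frac{\p}{\p z^{*}}$. The $p^{-}$-part is settled by the cited identity $\frac{\p}{\p z^{*}}T(g)=g$ for $g\in L^{1}(D)$. The $p^{+}$-part is settled by the analogous fact that $\widetilde{T}$ is a right inverse of $\frac{\p}{\p z}$: from the elementary identity $\frac{\p}{\p z}(F^{*})=\bigl(\frac{\p F}{\p z^{*}}\bigr)^{*}$ one gets $\frac{\p}{\p z}\widetilde{T}(g)=\frac{\p}{\p z}\bigl(T(g^{*})\bigr)^{*}=\bigl(\frac{\p}{\p z^{*}}T(g^{*})\bigr)^{*}=(g^{*})^{*}=g$. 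Reassembling the two idempotent pieces then yields $\dbar T_{\bc} f = p^{+} f^{+} + p^{-} f^{-} = f$.

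The step I expect to require the most care is the bookkeeping: pinning down exactly how the scalar Cauchy-type kernels pass through the $p^{\pm}$-splitting and how bicomplex conjugation permutes the idempotent components (Proposition \ref{everybchasplusandminus}), and -- slightly more delicate analytically -- justifying that $\frac{\p}{\p z^{*}}$ (resp.\ $\frac{\p}{\p z}$) may legitimately be brought inside the area integral in the $L^{1}$ setting, i.e.\ that what is really being invoked is Vekua's distributional identity $\frac{\p}{\p z^{*}}T(g)=g$ rather than a naive differentiation under the integral sign. The remaining manipulations are routine bicomplex algebra, with norm control furnished by \eqref{bcbasicestimates} and \eqref{stareqn}.
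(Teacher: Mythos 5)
The paper does not prove Theorem \ref{Thm: bctoperator}; it quotes it from the cited literature, so your proposal can only be measured against the standard idempotent-splitting argument found there --- which is indeed the strategy you outline. However, as written your differentiation step has a genuine gap. Your own decomposition puts \emph{both} kernels into each idempotent slot: writing $\widetilde{T}(g)(z) := -\frac{1}{\pi}\iint_D \frac{g(\zeta)}{\zeta^* - z^*}\,d\xi\,d\eta = \bigl(T(g^*)(z)\bigr)^*$, the displayed definition gives $(T_\bc f)^{\pm} = T(f^{\pm}) + \widetilde{T}(f^{\pm})$. Applying $\dbar = p^+\frac{\p}{\p z} + p^-\frac{\p}{\p z^*}$ therefore produces not only the terms you keep, namely $\frac{\p}{\p z}\widetilde{T}(f^+) = f^+$ and $\frac{\p}{\p z^*}T(f^-) = f^-$, but also the cross terms $\frac{\p}{\p z}T(f^+)$ and $\frac{\p}{\p z^*}\widetilde{T}(f^-)$, which you silently drop. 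These are the Ahlfors--Beurling singular integrals $\Pi(f^+)$ and $\bigl(\Pi\bigl((f^-)^*\bigr)\bigr)^*$, and they do not vanish in general: for $f = p^+ z$ one computes $T(\zeta)(z) = |z|^2 - 1$ and $\widetilde{T}(\zeta)(z) = z^2/2$, so the proposed computation gives $\dbar\, T_\bc f = p^+(z + z^*) \neq p^+ z$. In general it yields $\dbar\, T_\bc f = f + p^+\Pi(f^+) + p^-\bigl(\Pi\bigl((f^-)^*\bigr)\bigr)^*$, not $f$.

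The repair is that in the sources cited for this theorem the two Cauchy-type kernels are paired with the idempotent projections of $f$: the first integral is taken of $p^- f$ and the second of $p^+ f$, so that $T_\bc f = p^- T(f^-) + p^+ \widetilde{T}(f^+)$; the display in the statement appears to have lost these projections. With that definition each idempotent component meets only the kernel matched to the derivative that $\dbar$ applies to it, the cross terms never arise, and the rest of your argument --- existence from $f^{\pm} \in L^p(D) \subset L^1(D)$ on the bounded disk, the $p^-$ part from $\frac{\p}{\p z^*}T = \mathrm{id}$, the $p^+$ part from $\frac{\p}{\p z}\widetilde{T} = \mathrm{id}$ via $\frac{\p}{\p z}(F^*) = \bigl(\frac{\p F}{\p z^*}\bigr)^*$, together with the distributional reading of Vekua's identity --- is exactly the standard proof. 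You should either adopt the corrected definition explicitly or explain why the two extra singular-integral terms cancel (they do not).
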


The next theorem shows that much of the behavior of $T(\cdot)$ is recovered by $T_\bc(\cdot)$. This theorem generalizes Theorem \ref{VekKlimThmcombined} from the complex setting to the setting of bicomplex valued functions and was proved in \cite{BCAtomic}. 

\begin{theorem}[Theorem 2.31 \cite{BCAtomic}]\label{bctbehavior}
For every $f \in L^q(D, \bc)$, $1 \leq q \leq 2$, $T_\bc(f) \in L^\gamma(D,\bc)$, $1 < \gamma < \frac{2q}{2-q}$, $T_\bc(f)|_{\p D(0,r)} \in L^\gamma(\p D(0,r), \bc)$, $0 < r \leq 1$, where $\gamma$ satisfies $1 < \gamma < \frac{q}{2-q}$,  
\[
||T_\bc(f)||_{L^\gamma(\p D(0,r),\bc)} \leq C ||f||_{L^q(D,\bc)},
\]
where $C$ is a constant that does not depend on $r$ or $f$, and
\[
\lim_{r\nearrow 1} \int_0^{2\pi} ||T_\bc(f)(e^{i\theta}) - T_\bc(f)(re^{i\theta})||_\bc^\gamma \,d\theta = 0, 
\]
for $1 \leq \gamma < \frac{q}{2-q}$. For $f \in L^q(D,\bc)$, $q > 2$, $T(f) \in C^{0,\alpha}(\overline{D},\bc)$, with $\alpha = \frac{ q-2}{q}$. 
\end{theorem}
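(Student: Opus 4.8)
The plan is to reduce every assertion to the corresponding complex statement---Theorems \ref{Vekonepointtwentysix}, \ref{onefourseven}, \ref{onefoureight}, \ref{onepointnineteen}---by passing to the idempotent decomposition. Write $f = p^+ f^+ + p^- f^-$, so that by Proposition \ref{propLqiff} the hypothesis $f \in L^q(D,\bc)$ is equivalent to $f^+, f^- \in L^q(D)$, while \eqref{bcbasicestimates} gives $||f^\pm||_{L^q(D)} \le \sqrt{2}\,||f||_{L^q(D)_\bc}$. Since the idempotents $p^\pm$ are fixed constants and integration of $\bc$-valued functions is $\bc$-linear, the two integrals in the definition of $T_\bc$ (which is well-defined by Theorem \ref{Thm: bctoperator}) split, and one obtains
\[
    T_\bc f(z) = p^+\bigl(T(f^+)(z) + \widetilde{T}(f^+)(z)\bigr) + p^-\bigl(T(f^-)(z) + \widetilde{T}(f^-)(z)\bigr),
\]
where $\widetilde{T}(g)(z) := -\tfrac{1}{\pi}\iint_D \frac{g(\zeta)}{\zeta^* - z^*}\,d\eta\,d\xi$ for complex-valued $g$. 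The one computation that makes the argument work is the identity $\widetilde{T}(g)(z) = \bigl(T(g^*)(z)\bigr)^*$, where $g^*$ denotes the pointwise complex conjugate of $g$; it follows by conjugating under the integral sign and using $\overline{\zeta - z} = \zeta^* - z^*$. In particular each function $T(f^\pm) + \widetilde{T}(f^\pm)$ is complex-valued, so by uniqueness of the idempotent representation (Proposition \ref{everybchasplusandminus}) these are exactly the $\pm$-components of $T_\bc f$, and by Proposition \ref{propLqiff} together with \eqref{bcbasicestimates} the $L^p$-size, boundary-trace, and continuity behavior of $T_\bc f$ in the norm $||\cdot||_\bc$ is controlled by that of the four complex functions $T(f^\pm)$ and $T((f^\pm)^*)$.

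With this in place each claim is immediate. For membership in $L^\gamma(D,\bc)$: $T(f^\pm) \in L^\gamma(D)$ for $1 < \gamma < \tfrac{2q}{2-q}$ by Theorem \ref{Vekonepointtwentysix}, and since $(f^\pm)^* \in L^q(D)$ with the same norm, $\widetilde{T}(f^\pm) = (T((f^\pm)^*))^* \in L^\gamma(D)$ with equal norm; summing and invoking Proposition \ref{propLqiff} gives $T_\bc f \in L^\gamma(D,\bc)$. For the boundary estimate, apply Theorem \ref{onefourseven} to $f^\pm$ and to $(f^\pm)^*$; restriction to $\p D(0,r)$ commutes with complex conjugation and preserves modulus pointwise, so $\widetilde{T}(f^\pm)|_{\p D(0,r)}$ satisfies the same bound, and \eqref{bcbasicestimates} with $||f^\pm||_{L^q(D)} \le \sqrt{2}\,||f||_{L^q(D)_\bc}$ yields $||T_\bc f||_{L^\gamma(\p D(0,r),\bc)} \le C||f||_{L^q(D)_\bc}$ with $C$ independent of $r$ and $f$. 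For trace convergence, the difference $T_\bc f(e^{i\theta}) - T_\bc f(re^{i\theta})$ has $\pm$-components $\bigl(T(f^\pm)(e^{i\theta}) - T(f^\pm)(re^{i\theta})\bigr) + \bigl(\widetilde{T}(f^\pm)(e^{i\theta}) - \widetilde{T}(f^\pm)(re^{i\theta})\bigr)$, so by \eqref{bcbasicestimates} and $(a_1+a_2+a_3+a_4)^\gamma \le C_\gamma(a_1^\gamma + a_2^\gamma + a_3^\gamma + a_4^\gamma)$ it is enough that each of the four corresponding integrals tends to $0$, which is Theorem \ref{onefoureight} applied to $f^\pm$ or to $(f^\pm)^*$ (conjugation again leaving the integrand's modulus unchanged); the endpoint $\gamma = 1$, relevant only when $q > 1$, then follows from the embedding $L^{\gamma'}(\p D) \hookrightarrow L^1(\p D)$ for any $\gamma' \in (1,\tfrac{q}{2-q})$. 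Finally, for $q > 2$, Theorem \ref{onepointnineteen} gives $T(f^\pm), T((f^\pm)^*) \in C^{0,\alpha}(\overline{D})$ with $\alpha = \tfrac{q-2}{q}$; conjugation preserves the Hölder seminorm, so each $\pm$-component of $T_\bc f$ lies in $C^{0,\alpha}(\overline{D})$, and then $||T_\bc f(z) - T_\bc f(w)||_\bc \le \tfrac{1}{\sqrt{2}}\sum_{\pm}\bigl|(T_\bc f)^\pm(z) - (T_\bc f)^\pm(w)\bigr| \le C|z-w|^\alpha$, i.e. $T_\bc f \in C^{0,\alpha}(\overline{D},\bc)$.

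The substantive point---the only place care is needed, and it is bookkeeping rather than genuine difficulty---is the handling of the second, ``twisted'' integral in $T_\bc$: one must record the identity $\widetilde{T}(g) = (T(g^*))^*$ and observe that each of the four quoted complex theorems is stable both under pre-composing the input with $g \mapsto g^*$ and under post-composing the output with $w \mapsto w^*$, since those theorems see the data only through $|f|$ and through $|\zeta - z| = |\zeta^* - z^*|$. Once this is noted, every clause of the theorem is the corresponding complex clause applied twice (to $f^+$ and $f^-$) or four times (also to $(f^+)^*$ and $(f^-)^*$), reassembled through \eqref{bcbasicestimates} and Propositions \ref{everybchasplusandminus}--\ref{propLqiff}.
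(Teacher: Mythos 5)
This theorem is imported from \cite{BCAtomic} and not reproved in the paper, so there is no in-text proof to compare against; judged on its own, your argument is correct and complete, and it uses exactly the reduction the paper relies on everywhere else (idempotent splitting plus Proposition \ref{propLqiff} and \eqref{bcbasicestimates} to fall back on the complex Theorems \ref{Vekonepointtwentysix}--\ref{onepointnineteen}). The one genuinely necessary observation --- that the second integral satisfies $\widetilde{T}(g)=(T(g^*))^*$, so all four complex inputs $f^\pm,(f^\pm)^*$ are covered --- is stated and justified correctly, and your handling of the $\gamma=1$ endpoint via the embedding into $L^1(\p D)$ closes the only gap the quoted complex statements leave open.
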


The next definition extends Definition \ref{bvcircle} to the bicomplex setting. We follow this definition with a useful theorem that relates a function's inclusion in certain Lebesgue spaces with existence of a distributional boundary value.

\begin{deff}\label{bcbvcircle}Let $f: D \to \bc$. We say that $f$ has a boundary value in the sense of distributions, denoted by $f_b \in \mathcal{D}'(\p D)$, if, for every $\varphi \in C^\infty(\partial D)$, the limit
            \[
             \langle f_b, \varphi \rangle := \lim_{r \nearrow 1} \int_0^{2\pi} f(re^{i\theta}) \, \varphi(\theta) \,d\theta
            \]
            exists.
            
\end{deff}

\begin{theorem}[Theorem 2.30 \cite{BCAtomic}]\label{lonedistbv}
For every $f \in L^1(D, \bc)$ such that $f$ has a $L^1(\p D,\bc)$ boundary value and 
\[
    \lim_{r \nearrow 1} \int_0^{2\pi} ||f(re^{i\theta}) - f(e^{i\theta})||_\bc \,d\theta = 0,
\]
$f_b$ exists and $f_b = f|_{\p D}$ as distributions. 
\end{theorem}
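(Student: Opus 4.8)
The plan is to verify Definition~\ref{bcbvcircle} directly and then identify the resulting distribution. Fix a test function $\varphi \in C^\infty(\p D)$; since $\p D$ is compact we may set $M := \max_{\theta}|\varphi(\theta)| < \infty$. Note first that the mean-convergence hypothesis forces $\theta \mapsto f(re^{i\theta})$ to lie in $L^1(\p D,\bc)$ for every $r$ sufficiently close to $1$ (otherwise the displayed integral would be infinite for such $r$), so that both $\int_0^{2\pi} f(re^{i\theta})\varphi(\theta)\,d\theta$ and $\int_0^{2\pi} f(e^{i\theta})\varphi(\theta)\,d\theta$ are well-defined elements of $\bc$ and the pairing in Definition~\ref{bcbvcircle} makes sense for $r$ near $1$.

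The one technical ingredient is a triangle inequality for bicomplex-valued integrals: for $g \in L^1(\p D,\bc)$,
\[
\left\| \int_0^{2\pi} g(\theta)\,d\theta \right\|_\bc \le \sqrt{2} \int_0^{2\pi} \|g(\theta)\|_\bc \,d\theta .
\]
I would prove this by writing $g = p^+ g^+ + p^- g^-$ with $g^{\pm} \in L^1(\p D)$ (Proposition~\ref{propLqiff}), noting $\int_0^{2\pi} g = p^+ \int_0^{2\pi} g^+ + p^- \int_0^{2\pi} g^-$ by uniqueness of the idempotent decomposition (Proposition~\ref{everybchasplusandminus}), applying the scalar triangle inequality to $\int_0^{2\pi} g^{\pm}$, and finishing with the elementary bounds~(\ref{bcbasicestimates}) together with $|a| + |b| \le \sqrt{2}\,(|a|^2 + |b|^2)^{1/2}$; Minkowski's integral inequality in fact gives the sharp constant $1$, but the value of the constant is immaterial here. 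Applying this with $g(\theta) = \bigl(f(re^{i\theta}) - f(e^{i\theta})\bigr)\varphi(\theta)$, and using that $\varphi(\theta)$ is a complex scalar so that $\|w\varphi(\theta)\|_\bc = |\varphi(\theta)|\,\|w\|_\bc \le M\|w\|_\bc$ for $w \in \bc$, gives
\[
\left\| \int_0^{2\pi} f(re^{i\theta})\varphi(\theta)\,d\theta - \int_0^{2\pi} f(e^{i\theta})\varphi(\theta)\,d\theta \right\|_\bc \le \sqrt{2}\,M \int_0^{2\pi} \|f(re^{i\theta}) - f(e^{i\theta})\|_\bc \,d\theta .
\]

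By hypothesis the right-hand side tends to $0$ as $r \nearrow 1$, so the limit $\langle f_b,\varphi\rangle := \lim_{r\nearrow 1}\int_0^{2\pi} f(re^{i\theta})\varphi(\theta)\,d\theta$ exists and equals $\int_0^{2\pi} f(e^{i\theta})\varphi(\theta)\,d\theta$. Since $\varphi$ was an arbitrary element of $C^\infty(\p D)$, the distributional boundary value $f_b$ exists, and it acts on test functions precisely as integration against $f|_{\p D}$, i.e., $f_b = f|_{\p D}$ as distributions on $\p D$. I do not expect any genuine obstacle: the argument is just an $L^1$ mean-convergence estimate against a bounded test function, and the only points needing care are the justification of the bicomplex triangle inequality above and the remark that the hypotheses already guarantee $f(re^{i\cdot}) \in L^1(\p D,\bc)$ for $r$ near $1$, so that all the pairings involved are legitimate.
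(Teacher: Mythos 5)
Your argument is correct: the bicomplex triangle inequality $\|\int g\|_\bc \le \sqrt{2}\int\|g\|_\bc$ follows exactly as you describe from the idempotent decomposition and \eqref{bcbasicestimates}, the identity $\|w\varphi(\theta)\|_\bc = |\varphi(\theta)|\,\|w\|_\bc$ holds because a complex scalar has equal idempotent components, and the rest is the standard mean-convergence estimate against a bounded test function. The paper itself does not reprove this statement (it is quoted from \cite{BCAtomic}), but your proof is the expected one for this result and mirrors the argument used there for the complex-valued analogue, so there is no substantive divergence to report.
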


\subsection{Generalizations of the Hardy Spaces}

\color{black}

The study of classes of complex-valued functions defined on the disk that generalize the classic holomorphic Hardy spaces is an active area of function theory. One direction experiencing significant success is the study of solutions to certain differential equations, usually generalizations of the Cauchy-Riemann equation in some way, that also have finite $H^p$ norm. See, for example, \cite{KlimBook, PozHardy, CompOp, moreVekHardy, conjbel, threedvek, quasiHardy, quasiHardy2, quasiHardy3, polyhardy, metahardy, WB3, WBD, WB}.

The goal of this paper is to extend results found in \cite{WB3, WBD} concerning representation and boundary behavior of functions in Hardy-type classes of solutions to the complex Vekua equation
\[
    \frac{\p w}{\p z^*} = Aw + Bw^*,
\]
 and higher-order variations, to the setting of Hardy-type classes of functions defined on $D$ that take values in $\bc$ and solve bicomplex Vekua-type equations
\[
\dbar w = Aw + B\overline{w}
\]
and their associated higher-order variants. For completeness, we recall some relevant definitions and results. 

\begin{deff}\label{complexnonhomoghardydeff}
For $0 < p < \infty$ and $f: D \to \C$, we define the Hardy classes $H^p_f(D)$ to be the collection of functions $w: D \to \C$ such that 
\[
    \frac{\p w}{\p z^*} = f
\]
and 
\[
    \sup_{0 < r < 1} \int_0^{2\pi} |w(re^{i\theta})|^p \,d\theta < \infty.
\]
\end{deff}

\begin{comm}
    For $f \equiv 0$, $H^p_f(D) = H^p_0(D)$ is exactly the classic holomorphic Hardy space $H^p(D)$. 
\end{comm}

The boundary behavior of functions in the Hardy classes $H^p_f(D)$ was explored in \cite{WB,BCAtomic}. The following theorem shows that, under the right conditions, these functions have boundary values in the same way as the classical Hardy spaces. 

\begin{theorem}[Theorem 2.40 \cite{BCAtomic}]\label{thm: nonhomogHpbvcon}
For $0 < p < \infty$ and $q>2$, every $w \in H^p_f(D)$, where $f \in L^q(D)$, has a nontangential boundary value $w_{nt} \in L^p(\p D)$ and 
\[
\lim_{r \nearrow 1} \int_0^{2\pi} |w_{nt}(e^{i\theta}) - w(re^{i\theta}) |^p \, d\theta  = 0.
\]
The result holds when $1 < q \leq 2$, so long as $p$ satisfies $p < \frac{q}{2-q}$. 
\end{theorem}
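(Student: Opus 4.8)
The plan is to reduce to the classical holomorphic Hardy space $\Har$ by subtracting off the inhomogeneity. Given $w \in \Hp$, set $g := w - T(f)$, where $T(\cdot)$ is the Vekua operator of Definition \ref{vekopondisk}; since $f \in L^q(D) \subset L^1(D)$ the function $T(f)$ is defined and $\frac{\p}{\p z^*}T(f) = f$, so $\frac{\p g}{\p z^*} = 0$ and $g$ is holomorphic (the equation being understood distributionally, ellipticity of $\frac{\p}{\p z^*}$ makes $g$ holomorphic). Everything then rests on two claims about $T(f)$: (i) $M := \sup_{0<r<1}\big(\int_0^{2\pi}|T(f)(re^{i\theta})|^p\,d\theta\big)^{1/p} < \infty$; and (ii) $T(f)$ has an $L^p(\p D)$ nontangential boundary value $T(f)_{nt}$ with $\int_0^{2\pi}|T(f)(re^{i\theta}) - T(f)_{nt}(e^{i\theta})|^p\,d\theta \to 0$ as $r \nearrow 1$. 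Granting (i)--(ii): the elementary inequality $|a+b|^p \le C_p(|a|^p + |b|^p)$ yields $\sup_r \int_0^{2\pi}|g(re^{i\theta})|^p\,d\theta \le C_p\big(\sup_r\int_0^{2\pi}|w(re^{i\theta})|^p\,d\theta + M^p\big) < \infty$, so $g \in \Har$; Theorem \ref{bvcon} then supplies $g_{nt} \in L^p(\p D)$ with $\int_0^{2\pi}|g(re^{i\theta}) - g_{nt}(e^{i\theta})|^p\,d\theta \to 0$; and $w_{nt} := g_{nt} + T(f)_{nt} \in L^p(\p D)$ is the desired nontangential boundary value of $w = g + T(f)$, the asserted $L^p$-convergence following from one more application of $|a+b|^p \le C_p(|a|^p + |b|^p)$.

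It remains to establish (i) and (ii), and here the two ranges of $q$ are handled separately. For $q > 2$: Theorem \ref{onepointnineteen} gives $T(f) \in C^{0,\alpha}(\overline D)$ with $\alpha = (q-2)/q$; in particular $T(f)$ is bounded on $\overline D$, so $\int_0^{2\pi}|T(f)(re^{i\theta})|^p\,d\theta \le 2\pi\,\big(\sup_{\overline D}|T(f)|\big)^p$ uniformly in $r$, which is (i), while the continuous restriction $T(f)|_{\p D}$ serves as the nontangential boundary value and $\int_0^{2\pi}|T(f)(re^{i\theta}) - T(f)(e^{i\theta})|^p\,d\theta \to 0$ by uniform continuity, which is (ii). For $1 < q \le 2$ with $p < \frac{q}{2-q}$: choose $\gamma$ with $\max(1,p) < \gamma < \frac{q}{2-q}$, which is possible precisely because $q > 1$ forces $\frac{q}{2-q} > 1$ and the hypothesis gives $p < \frac{q}{2-q}$. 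Theorem \ref{onefourseven} then gives $\|T(f)\|_{L^\gamma(\p D(0,r))} \le C\|f\|_{L^q(D)}$ with $C$ independent of $r$; since $\gamma \ge p$ and each circle has finite length, Hölder's inequality upgrades this to a uniform bound on $\|T(f)\|_{L^p(\p D(0,r))}$, which is (i). Moreover $T(f)|_{\p D} \in L^\gamma(\p D) \subset L^p(\p D)$, and Theorem \ref{onefoureight} gives $\int_0^{2\pi}|T(f)(re^{i\theta}) - T(f)(e^{i\theta})|^\gamma\,d\theta \to 0$, hence the same with exponent $p$.

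The delicate point --- and the step I expect to be the genuine obstacle --- is the nontangential half of (ii) in the range $1 < q \le 2$: Theorems \ref{onefourseven} and \ref{onefoureight} only control the means of $T(f)$ over the circles $\p D(0,r)$, not its behavior along nontangential approach regions. One resolves this using the finer regularity of the Vekua operator: for $f \in L^q(D)$, $q > 1$, one has $T(f) \in W^{1,q}_{\mathrm{loc}}(D)$ (its $z$-derivative being a Calderón--Zygmund transform of $f$, again in $L^q$), and combining this interior Sobolev regularity with the uniform trace bound furnished by Theorem \ref{onefourseven} yields nontangential limits a.e. on $\p D$ that coincide with $T(f)|_{\p D}$; this is the content of the corresponding results in \cite{KlimBook}. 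With that in hand, the decomposition $w = g + T(f)$ closes the argument simultaneously in both ranges of $q$.
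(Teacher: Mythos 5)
The paper does not actually prove this statement---it imports it verbatim as Theorem 2.40 of \cite{BCAtomic}---but your decomposition $w = g + T(f)$ with $g$ holomorphic is precisely the mechanism that the cited source runs on, and it is the same mechanism this paper deploys for its bicomplex analogues (Theorems \ref{bcvekhardyrepgeneral} and \ref{Thm: bcvekhardybvcon}): classical Hardy theory (Theorem \ref{bvcon}) handles the holomorphic part, and the mapping properties of the Vekua operator (Theorems \ref{onepointnineteen}, \ref{onefourseven}, \ref{onefoureight}) handle $T(f)$. Your claim (i), your treatment of the $q>2$ case, your choice of $\gamma$ with $\max(1,p)<\gamma<\frac{q}{2-q}$, and the $L^p$-norm convergence in both ranges are all complete and correct.

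The one genuine soft spot is exactly the one you flagged: almost-everywhere nontangential convergence of $T(f)$ when $1<q\leq 2$. Your proposed resolution does not close it as written. Theorems \ref{onefourseven} and \ref{onefoureight} control only means over the circles $\partial D(0,r)$ and radial convergence in mean, so they are not ``the content of the corresponding results in \cite{KlimBook}'' that you need, and ``interior $W^{1,q}_{\mathrm{loc}}$ regularity plus a uniform trace bound implies nontangential limits a.e.'' is not a theorem. What does close the gap is the following sharper chain: $T(f)\in W^{1,q}(D)$ globally (since $\partial_{z^*}T(f)=f$ and $\partial_z T(f)=\Pi f$ with $\Pi$ the Beurling transform, bounded on $L^q$ for $1<q<\infty$), and a Sobolev function of class $W^{1,q}$ with $q>1$ has nontangential limits at every boundary point outside a set of $(1,q)$-Bessel capacity zero; such exceptional sets have Hausdorff dimension at most $2-q<1$ and hence arc-length measure zero on $\partial D$. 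That capacitary fine-limit argument is genuinely deeper than anything else in the proof and needs an accurate citation (it is not in the portions of \cite{KlimBook} quoted here). Note also that this is only needed for the existence of $w_{nt}$; the displayed $L^p$-convergence statement follows already from your fully rigorous mean-convergence argument. Everything else in your write-up stands.
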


A special case, of significant interest, of the above Hardy classes of solutions to nonhomogeneous Cauchy-Riemann equations is the Vekua-Hardy spaces defined below.

\begin{deff}
    For $0 < p < \infty$ and $A, B : D\to\mathbb{C}$, we define the Vekua-Hardy spaces $H^p_{A,B}(D)$ to be the collection of $w: D \to \mathbb{C}$ such that 
    \begin{equation}\label{complexVekua}
    \frac{\p w}{\p z^*} = Aw + Bw^*
\end{equation}
and 
\[
    \sup_{0 < r < 1} \int_0^{2\pi} |w(re^{i\theta})|^p \,d\theta < \infty.
\]
\end{deff}

\begin{comm}Functions that solve (\ref{complexVekua}) are called generalized analytic functions by those who follow the works of Vekua \cite{Vek} and are called pseudoanalytic functions (or pseudoholomorphic functions) by those who follow the works of Bers \cite{Bers}. For a specific pair of coefficients $A$ and $B$, we denote by $H_{A,B}(D)$ the collection of solutions of (\ref{complexVekua}). The Vekua-Hardy spaces (and their higher-order variations) have been previously considered in, for example, \cite{KlimBook, PozHardy, CompOp, moreVekHardy, conjbel,threedvek, WB3, WBD}. \end{comm}

Some relevant results that will be referenced throughout the paper are the following.

\begin{theorem}[``The Basic Lemma'' \cite{Vek}, Theorems 2.1.1, 2.1.3, 2.1.5, 2.1.6 \cite{KlimBook}, Corollary 5.6 \cite{WBD}] \label{VekHardyCombined}
Let $A, B \in L^q(D)$, $q>2$, and $0 < p < \infty$. 
    \begin{enumerate}
    \item Every solution $w: D \to \mathbb{C}$ of the Vekua equation
    \[
        \frac{\p w}{\p z^*} = Aw + Bw^*
    \] has the form $w = \varphi e^\phi$, where $\varphi \in Hol(D)$ and $\phi \in C^{0, \alpha}(\overline{D})$, $\alpha = \frac{q-2}{q}$. 
    \item A function $w = \varphi e^\phi$ is in $H^p_{A,B}(D)$ if and only if $\varphi \in H^p(D)$. 
    \item A function $w \in H^p_{A,B}(D)$ has a nontangential boundary value $w_{nt} \in L^p(\p D)$ almost everywhere on $\p D$, 
    \begin{equation}\label{equationtwopointone}
    \lim_{r \nearrow 1} \int_0^{2\pi} |w(re^{i\theta})|^p \,d\theta = \int_0^{2\pi} |w_{nt}(e^{i\theta})|^p \,d\theta,
    \end{equation}
    and
    \begin{equation}\label{equationtwopointtwo}
    \lim_{r\nearrow 1}\int_0^{2\pi} |w(re^{i\theta}) - w_{nt}(e^{i\theta})|^p \,d\theta = 0.
    \end{equation}
     If $w_{nt} \in L^s( \p D)$, $s>p$, then $w \in H^s_{A,B}(D)$. 
    \item Every function $w \in H^p_{A,B}(D)$ is an element of $L^m(D)$, for every $0 < m < 2p$. 
     \item For $p \geq 1$, $w \in H^p_{A,B}(D)$ if and only if $w \in H_{A,B}(D)$, $w_b$ exists, and $w_b \in L^p(\p D)$.
    \end{enumerate}
\end{theorem}

\section{Bicomplex Hardy Spaces}\label{bcholohardyspaces}

We begin by defining the bicomplex-valued Hardy spaces of functions defined on the complex unit disk. These classes of functions were previously considered in \cite{BCAtomic}.

\begin{deff}
We define the $\bc$-holomorphic functions on $D$ to be the functions $f: D \to \bc$ such that 
    \[
        \dbar f = 0.
    \]
    We denote by $\holb$ the space of all $\bc$-holomorphic functions on $D$.  Similarly, we define the $\bc$-anti-holomorphic functions on $D$ to be the functions $f: D \to \bc$ such that 
    \[
        \p f = 0.
    \]
    We denote by $\overline{\holb}$ the space of all $\bc$-anti-holomorphic functions on $D$.
\end{deff}

\begin{deff}
Let $w: D \to \bc$. We define $\hw$ to be those functions $f: D \to \bc$ such that 
\[
    \dbar f = w.
\]
\end{deff}

\begin{deff}
    For $0 < p < \infty$ and a function $f: D \to \bc$, we define the bicomplex-$H^p$ norm to be
    \[
        ||f||_{H^p_{\bc}} := \sup_{0 < r < 1}\left( \int_0^{2\pi} ||f(re^{i\theta})||^p_{\bc} \,d\theta\right)^{1/p}.
    \]
\end{deff}

\begin{comm}
    The bicomplex-$H^p$ norm is only truly a norm when $p \geq 1$ and is a quasi-norm for $0 < p < 1$. Following the convention in the literature for analogous objects, we call this quantity a norm regardless of the value of $p$.  
\end{comm}

\begin{deff}
    Let $0 < p < \infty$. We define the bicomplex Hardy spaces $\hpb$ to be those functions $f \in \holb$ such that 
    $||f||_{H^p_{\bc}} < \infty$.
\end{deff}

In \cite{BCAtomic}, the following three results were proved for the $\bc$-holomorphic Hardy spaces $H^p(D,\bc)$. We recall them here to familiarize the reader with these spaces, and we prove an additional inclusion that will be extended later.

\begin{theorem}[Theorem 4.1 \cite{BCAtomic}]\label{thm: bchardyrep}
    For $0 < p < \infty$, a function $f = p^+f^+ + p^-f^- \in \hpb$ if and only if $(f^+)^*,f^- \in H^p(D)$. 
\end{theorem}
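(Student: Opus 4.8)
The plan is to reduce the bicomplex statement to the classical holomorphic Hardy space characterization by exploiting the idempotent decomposition. Write $f = p^+ f^+ + p^- f^-$ with $f^+, f^- : D \to \C$. The first step is to recall, as established in the discussion following Definition \ref{bcdbardef}, that $\dbar f = 0$ if and only if $\frac{\p f^+}{\p z} = 0$ and $\frac{\p f^-}{\p z^*} = 0$, i.e., $(f^+)^*$ and $f^-$ are complex-valued holomorphic functions. Thus membership of $f$ in $\holb$ is equivalent to $(f^+)^*, f^- \in Hol(D)$, and it only remains to match up the finiteness of $||f||_{H^p_\bc}$ with the finiteness of $||(f^+)^*||_{H^p}$ and $||f^-||_{H^p}$.

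For the norm equivalence, I would use the pointwise estimates \eqref{bcbasicestimates}: for $w = p^+ w^+ + p^- w^- \in \bc$ one has $\frac{1}{\sqrt2}|w^\pm| \le ||w||_\bc \le \frac{1}{\sqrt2}(|w^+| + |w^-|)$. Applying these with $w = f(re^{i\theta})$, $w^\pm = f^\pm(re^{i\theta})$, and integrating in $\theta$ (using that $|(f^+)^*| = |f^+|$ since complex conjugation preserves modulus), the lower bound gives
\[
    \int_0^{2\pi} |f^\pm(re^{i\theta})|^p \, d\theta \le 2^{p/2} \int_0^{2\pi} ||f(re^{i\theta})||^p_\bc \, d\theta,
\]
so $||f||_{H^p_\bc} < \infty$ forces $(f^+)^*, f^- \in H^p(D)$. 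Conversely, from the upper bound and the elementary inequality $(a+b)^p \le c_p(a^p + b^p)$ (with $c_p = 1$ for $p \le 1$ and $c_p = 2^{p-1}$ for $p \ge 1$),
\[
    \int_0^{2\pi} ||f(re^{i\theta})||^p_\bc \, d\theta \le 2^{-p/2} c_p \left( \int_0^{2\pi} |f^+(re^{i\theta})|^p \, d\theta + \int_0^{2\pi} |f^-(re^{i\theta})|^p \, d\theta \right),
\]
and taking $\sup_{0<r<1}$ of both sides shows that $(f^+)^*, f^- \in H^p(D)$ implies $||f||_{H^p_\bc} < \infty$.

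There is essentially no serious obstacle here; the proof is a routine transfer argument. The only point requiring a little care is bookkeeping with the two conjugations: one must keep in mind that it is $(f^+)^*$, not $f^+$, that is holomorphic (because $\frac{\p f^+}{\p z} = 0$), while the modulus appearing in the norm is $|f^+| = |(f^+)^*|$, so the same function $(f^+)^*$ controls both the holomorphy condition and the size condition. Assembling the two implications completes the proof. I would also remark that this is Proposition \ref{propLqiff} in spirit — the idempotent components decouple the analytic and size conditions — and indeed one could alternatively invoke Proposition \ref{propLqiff} for the boundary pieces, but the direct pointwise estimate is cleaner since it handles interior circles uniformly in $r$.
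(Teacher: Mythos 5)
Your proof is correct: the decoupling of the holomorphy condition into $(f^+)^*, f^- \in Hol(D)$ together with the two-sided pointwise norm estimates \eqref{bcbasicestimates} is exactly the right argument, and your handling of the two conjugations is accurate. Note that the paper itself states this result as a citation of Theorem 4.1 of \cite{BCAtomic} without reproducing the proof, but your argument is structurally identical to the one the paper does give for the Vekua analogue (Theorem \ref{thm: bcvekhardyimpliescvekhardy}), so you have recovered the intended approach.
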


\begin{theorem}[Theorem 4.3 \cite{BCAtomic}]\label{bcholobvcon}
    For $0 < p < \infty$, every $f = p^+ f^+ + p^- f^-\in \hpb$ such that $f^+ \in H^p_w(D)$, where $w \in L^q(D)$, and $q >2$ or $1 < q \leq 2$ and $p$ satisfies $p < \frac{q}{2-q}$, has a nontangential limit $f_{nt} \in L^p(\p D, \bc)$, and 
    \[
        \lim_{r \nearrow 1} \int_0^{2\pi} ||f_{nt}(e^{i\theta}) - f(re^{i\theta}) ||^p_{\bc} \, d\theta  = 0. 
    \] 
\end{theorem}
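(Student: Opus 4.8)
The plan is to reduce the statement to its two scalar, complex-valued components through the idempotent decomposition $f = p^+ f^+ + p^- f^-$ and the two-sided bound \eqref{bcbasicestimates}, and then invoke the scalar boundary-value theorems already available for each component.

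First I would pin down where each component lives. Because $f\in\hpb$ is $\bc$-holomorphic, Theorem \ref{thm: bchardyrep} gives $f^-\in H^p(D)$ (alternatively, holomorphy of $f^-$ comes from $\dbar f=0$ and the finite $H^p$ norm from the left inequality in \eqref{bcbasicestimates} together with $||f||_{H^p_\bc}<\infty$). Hence Theorem \ref{bvcon} supplies a nontangential limit $f^-_{nt}\in L^p(\p D)$ with $\lim_{r\nearrow1}\int_0^{2\pi}|f^-(re^{i\theta})-f^-_{nt}(e^{i\theta})|^p\,d\theta=0$. For the other component the hypothesis is precisely that $f^+\in H^p_w(D)$ with $w\in L^q(D)$ in one of the two admissible ranges of $(p,q)$, so Theorem \ref{thm: nonhomogHpbvcon} applies to $f^+$ directly and yields a nontangential limit $f^+_{nt}\in L^p(\p D)$ with $\lim_{r\nearrow1}\int_0^{2\pi}|f^+(re^{i\theta})-f^+_{nt}(e^{i\theta})|^p\,d\theta=0$.

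Next I would assemble the candidate boundary value $f_{nt}:=p^+f^+_{nt}+p^-f^-_{nt}$; since $f^+_{nt},f^-_{nt}\in L^p(\p D)$, Proposition \ref{propLqiff} gives $f_{nt}\in L^p(\p D,\bc)$. For the pointwise nontangential statement I would use that $z\mapsto f(z)=p^+f^+(z)+p^-f^-(z)$ is the image of $(f^+(z),f^-(z))$ under a fixed linear isomorphism $\C^2\to\bc$ whose norm and inverse norm are controlled by \eqref{bcbasicestimates}; hence on the intersection of the two a.e.-defined subsets of $\p D$ on which the scalar nontangential limits exist (still of full measure) one has $f(z)\to p^+f^+_{nt}+p^-f^-_{nt}=f_{nt}$ nontangentially. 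For the $L^p$ convergence I would apply the right inequality in \eqref{bcbasicestimates} to $f(re^{i\theta})-f_{nt}(e^{i\theta})=p^+\bigl(f^+(re^{i\theta})-f^+_{nt}(e^{i\theta})\bigr)+p^-\bigl(f^-(re^{i\theta})-f^-_{nt}(e^{i\theta})\bigr)$ followed by $(a+b)^p\le\max(1,2^{p-1})(a^p+b^p)$, obtaining
\[
\int_0^{2\pi}||f(re^{i\theta})-f_{nt}(e^{i\theta})||_\bc^p\,d\theta\ \lesssim\ \int_0^{2\pi}|f^+(re^{i\theta})-f^+_{nt}(e^{i\theta})|^p\,d\theta+\int_0^{2\pi}|f^-(re^{i\theta})-f^-_{nt}(e^{i\theta})|^p\,d\theta,
\]
and letting $r\nearrow1$ finishes the proof via the two scalar limits above.

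There is no serious analytic obstacle: once the problem is split along the idempotents it is two invocations of results already in hand. The one point that genuinely needs care is that a nontangential limit of a $\bc$-valued function is equivalent to the simultaneous nontangential convergence of both scalar components — this is exactly what the bi-Lipschitz equivalence \eqref{bcbasicestimates} between $||\cdot||_\bc$ and the Euclidean norm in the $(w^+,w^-)$ coordinates is for, and it is also where one must pass to the intersection of two full-measure subsets of $\p D$. A minor bookkeeping point is to confirm that the stated restriction on $(p,q)$ matches precisely the hypotheses under which Theorem \ref{thm: nonhomogHpbvcon} is valid for $f^+$, while $f^-$ requires no restriction since it lies in $H^p(D)=H^p_0(D)$.
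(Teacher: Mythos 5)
Your proposal is correct and follows essentially the same route the paper attributes to the original source: this theorem is quoted from \cite{BCAtomic} without proof here, but the paper explicitly states that the result is obtained ``by appealing to the idempotent representation,'' and it reproduces exactly your argument (split $f$ into $f^\pm$, apply Theorem \ref{thm: nonhomogHpbvcon} to $f^+$ and the classical Theorem \ref{bvcon} to $f^-\in H^p(D)$, then reassemble using \eqref{bcbasicestimates} and $(a+b)^p\le\max(1,2^{p-1})(a^p+b^p)$) in its proof of Theorem \ref{bcvekhardybvfromidempotent}. No gaps.
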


\begin{prop}[Proposition 7.2 \cite{BCAtomic}]\label{Prop: bchardymlessthan2p}
For $0 < p < \infty$, every $w \in H^p(D,\bc)$ is an element of $L^m(D,\bc)$, for all $0 < m < 2p$. 
\end{prop}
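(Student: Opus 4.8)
The plan is to reduce the statement to the corresponding fact about the classical holomorphic Hardy spaces by passing to the idempotent decomposition. Write $w = p^+ w^+ + p^- w^-$ with $w^{\pm} : D \to \C$. By Theorem \ref{thm: bchardyrep}, the hypothesis $w \in H^p(D,\bc)$ is equivalent to $(w^+)^* \in H^p(D)$ and $w^- \in H^p(D)$. Since complex conjugation preserves modulus, $|w^+| = |(w^+)^*|$ pointwise on $D$, so both $w^+$ and $w^-$ agree in modulus with functions in the scalar Hardy space $H^p(D)$.

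Next I would invoke the embedding of the holomorphic Hardy space into the area-Lebesgue spaces on the disk: for every $0 < p < \infty$, a function in $H^p(D)$ belongs to $L^m(D)$ for all $0 < m < 2p$. This is precisely Theorem \ref{inHardypinLebm} in the special case $A \equiv B \equiv 0$, for which $H^p_{A,B}(D)$ coincides with $H^p(D)$; it is also a classical fact (see, e.g., \cite{Duren}). Applied to $(w^+)^*$ and to $w^-$, and combined with $|w^+| = |(w^+)^*|$, this yields $w^+ \in L^m(D)$ and $w^- \in L^m(D)$ for every $0 < m < 2p$.

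Finally I would conclude with Proposition \ref{propLqiff}: for a positive real exponent $m$, a bicomplex-valued function $p^+ g^+ + p^- g^-$ lies in $L^m(D,\bc)$ if and only if $g^+, g^- \in L^m(D)$. Taking $g^{\pm} = w^{\pm}$ and letting $m$ range over $(0,2p)$ then gives $w \in L^m(D,\bc)$ for all $0 < m < 2p$, as claimed.

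I do not expect a substantive obstacle here: the argument is essentially bookkeeping that transfers a known scalar embedding across the splitting $\bc = p^+\C \oplus p^-\C$, the only quantitative inputs being the elementary norm comparisons \eqref{bcbasicestimates} underlying Propositions \ref{everybchasplusandminus} and \ref{propLqiff}. The one point where care is warranted is the sharp exponent: it is the scalar bound $m < 2p$ in Theorem \ref{inHardypinLebm} that produces the stated range, whereas a crude argument based only on the pointwise growth estimate $|g(z)| \lesssim \|g\|_{H^p}(1-|z|)^{-1/p}$ would yield merely $m < p$, so one genuinely needs the finer scalar result (i.e., Klimentov's Theorem 1.4.5 / Theorem \ref{inHardypinLebm}) rather than the naive bound.
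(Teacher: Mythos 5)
Your proof is correct. Note that the paper does not prove this statement itself---it is imported verbatim from \cite{BCAtomic} as Proposition 7.2---so there is no in-paper argument to compare against; but your route (idempotent decomposition via Theorem \ref{thm: bchardyrep}, the scalar embedding $H^p(D)\subset L^m(D)$ for $m<2p$, and recombination via Proposition \ref{propLqiff}) is exactly the template the author uses to prove the analogous extensions such as Theorem \ref{inBCVekHardypinLebm} and the poly-Hardy version, and your closing remark about why the sharp exponent $2p$ requires the genuine scalar embedding rather than the pointwise growth bound is a correct and worthwhile observation.
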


Now, we show that if the nontangential boundary value of a function in $H^p(D,\bc)$ is a member of a more restricted Lebesgue space, then this implies inclusion in a more restricted Hardy space for the function. This extends a theorem of Smirnov, see Theorem 1.28 \cite{fcotud}.

\begin{theorem}\label{bcbdinsfuncins}
Let $p$ be a positive real number. Every $f \in H^p(D,\bc)$ such that $f_{nt} \in L^s(\p D, \bc)$, where $s > p$, is an element of $H^s(D,\bc)$. 
\end{theorem}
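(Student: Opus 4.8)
The plan is to reduce the bicomplex statement to the classical Smirnov theorem componentwise via the idempotent decomposition, which is precisely how Theorem~\ref{thm: bchardyrep} is set up. First I would write $f = p^+ f^+ + p^- f^-$ with $f \in H^p(D,\bc)$, so that by Theorem~\ref{thm: bchardyrep} we have $(f^+)^*, f^- \in H^p(D)$. The next step is to transfer the hypothesis $f_{nt} \in L^s(\p D, \bc)$ to the components: since nontangential convergence of $f$ is equivalent to nontangential convergence of $f^+$ and $f^-$ (this follows from the norm equivalence \eqref{bcbasicestimates}, which controls $|w^\pm|$ by $\|w\|_\bc$ and vice versa), the boundary value $f_{nt}$ decomposes as $f_{nt} = p^+ (f^+)_{nt} + p^- (f^-)_{nt}$ where $(f^\pm)_{nt}$ are the nontangential boundary values of the holomorphic functions $(f^+)^*$ (conjugated) and $f^-$ on $\p D$. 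By Proposition~\ref{propLqiff}, $f_{nt} \in L^s(\p D, \bc)$ if and only if $(f^+)_{nt}, (f^-)_{nt} \in L^s(\p D)$, equivalently $((f^+)^*)_{nt}, (f^-)_{nt} \in L^s(\p D)$.

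Having arranged this, I would invoke the classical Smirnov theorem (Theorem 1.28 of \cite{fcotud}): a holomorphic function in $H^p(D)$ whose nontangential boundary value lies in $L^s(\p D)$ for some $s > p$ actually belongs to $H^s(D)$. Applying this separately to $(f^+)^* \in H^p(D)$ and to $f^- \in H^p(D)$ — both of which have $L^s$ nontangential boundary values by the previous paragraph — we conclude $(f^+)^*, f^- \in H^s(D)$. Then Theorem~\ref{thm: bchardyrep}, applied now with exponent $s$ in place of $p$, gives exactly $f = p^+ f^+ + p^- f^- \in H^s(D, \bc)$, which is the desired conclusion.

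The only point requiring a little care — and the one I would expect to be the main (minor) obstacle — is justifying that the nontangential boundary value of $f$ decomposes cleanly into the boundary values of its idempotent components, and that membership of $f_{nt}$ in $L^s(\p D,\bc)$ is genuinely equivalent to the $L^s$ membership of each component's boundary value. This is where \eqref{bcbasicestimates} and Proposition~\ref{propLqiff} do the work: the two-sided bound $\tfrac{1}{\sqrt 2}|w^\pm| \le \|w\|_\bc \le \tfrac{1}{\sqrt 2}(|w^+| + |w^-|)$ shows that a nontangential limit exists in the $\bc$-norm if and only if it exists for each complex component, and passing to the limit preserves the idempotent decomposition by continuity of $w \mapsto w^\pm$ (Proposition~\ref{everybchasplusandminus}). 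Everything else is a direct citation of the scalar Smirnov theorem and Theorem~\ref{thm: bchardyrep}; no new estimates on $T_\bc$ or on the disk are needed.
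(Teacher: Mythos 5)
Your proposal is correct and follows essentially the same route as the paper: decompose via Theorem~\ref{thm: bchardyrep}, transfer the $L^s$ hypothesis to the idempotent components using Proposition~\ref{propLqiff} and the observation that $|(f^+)^*| = |f^+|$, apply the scalar Smirnov-type result componentwise, and reassemble with Theorem~\ref{thm: bchardyrep}. The only cosmetic difference is that you cite the classical Smirnov theorem while the paper invokes its Vekua-Hardy generalization (Theorem~\ref{bdinsfuncins}) applied to holomorphic functions; these coincide here.
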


\begin{proof}
By Theorem \ref{thm: bchardyrep}, if $f \in H^p(D,\bc)$, then $(f^+)^*, f^- \in H^p(D)$. By Theorem \ref{propLqiff}, since $f_{nt} \in L^s(\p D,\bc)$, it follows that $f^\pm_{nt} \in L^s(\p D)$. Since 
\[
    |(f^+)^*(z)| = |f^+(z)|
\]
for all $z \in D$, it follows that $(f^+)^*_{nt}, f^-_{nt} \in L^s(\p D)$. By Theorem \ref{VekHardyCombined} statement (3), $(f^+)^*,f^- \in H^s(D)$. Therefore, by Theorem \ref{thm: bchardyrep}, $f \in H^s(D,\bc)$. 
\end{proof}

We conclude this brief section concerning the $\bc$-holomorphic Hardy spaces by proving a theorem that extends Theorem \ref{GHJH23point1} to this setting. The theorem will be used to show existence of distributional boundary values in the more general settings that follow. 

\begin{theorem}\label{bcGHJH23point1}
For $f \in Hol(D,\bc)$, the following are equivalent:
    \begin{enumerate}
         \item For every $\phi \in C^\infty(\p D)$, there exists the limit
    \[
             \langle f_b, \phi \rangle := \lim_{r \nearrow 1} \int_0^{2\pi} f(re^{i\theta}) \, \phi(\theta) \,d\theta.
            \]

    \item There is a distribution $f_b = p^+ f_b^+ + p^- f^-_b \in \mathcal{D}'(\p D)$ such that $f$ is representable as
    \[
             f(re^{i\theta}) = p^+\left(\frac{1}{2\pi}\langle (f^+_b)^*, P_r(\theta - \cdot) \rangle \right)^* + p^-\frac{1}{2\pi}\langle f^-_b, P_r(\theta - \cdot) \rangle,
            \]
    where 
    \[
        P_r(\theta) = \frac{1-r^2}{1-2r\cos(\theta) +r^2}
    \]
    is the Poisson kernel on $D$. 

    \item There are constants $C>0$, $\alpha \geq 0$, such that 
    \[
        ||f(re^{i\theta})||_\bc \leq \frac{C}{(1-r)^\alpha},
    \]
    for $0 \leq r < 1$.
    \end{enumerate}
\end{theorem}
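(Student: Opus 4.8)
The plan is to reduce everything to the scalar (complex-valued) statement in Theorem~\ref{GHJH23point1} via the idempotent decomposition $f = p^+ f^+ + p^- f^-$, exactly in the spirit of the proofs of Theorems~\ref{thm: bchardyrep} and~\ref{bcbdinsfuncins}. The key observation is that $f \in Hol(D,\bc)$ means $\dbar f = 0$, which (as recalled after Definition~\ref{bcdbardef}) is equivalent to $(f^+)^*$ and $f^-$ being complex-valued holomorphic functions in $Hol(D)$. So each of the three conditions should be shown equivalent to the conjunction of the corresponding scalar condition (from Theorem~\ref{GHJH23point1}) applied to $(f^+)^*$ and to $f^-$, and then the three scalar equivalences finish the job.

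First I would handle the norm condition (third bullet). Using the basic estimates \eqref{bcbasicestimates}, $\|f(re^{i\theta})\|_\bc \le \frac{C}{(1-r)^\alpha}$ for all $r$ is equivalent to having such bounds on $|f^+(re^{i\theta})|$ and $|f^-(re^{i\theta})|$ (possibly with different constants, same $\alpha$ by taking the max), and since $|f^+| = |(f^+)^*|$ pointwise, this is equivalent to the scalar condition~(3) of Theorem~\ref{GHJH23point1} holding simultaneously for $(f^+)^* \in Hol(D)$ and $f^- \in Hol(D)$. Next, for the distributional limit condition (first bullet): writing out $\int_0^{2\pi} f(re^{i\theta})\phi(\theta)\,d\theta = p^+ \int_0^{2\pi} f^+(re^{i\theta})\phi(\theta)\,d\theta + p^- \int_0^{2\pi} f^-(re^{i\theta})\phi(\theta)\,d\theta$, and using that $p^+, p^-$ are linearly independent over $\C$ (indeed $w = p^+w^+ + p^-w^-$ with $w^\pm$ uniquely determined, Proposition~\ref{everybchasplusandminus}), the limit as $r\nearrow 1$ exists in $\bc$ if and only if both scalar limits exist; testing against $\phi$ and against $\phi$ conjugated lets one pass between $f^+$ and $(f^+)^*$, so this is equivalent to condition~(1) of Theorem~\ref{GHJH23point1} for both $(f^+)^*$ and $f^-$. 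Then the equivalence (1)$\iff$(3) in the scalar theorem transfers directly.

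For the representation condition (second bullet), I would argue that it is equivalent to condition~(2) of Theorem~\ref{GHJH23point1} holding for $(f^+)^*$ with distribution $(f^+_b)^*$ and for $f^-$ with distribution $f^-_b$: the displayed formula is literally $f = p^+\big((f^+)^*\text{'s Poisson integral}\big)^* + p^-\big(f^-\text{'s Poisson integral}\big)$, and conjugating the $p^+$-component undoes the star so that $f^+$ equals the Poisson integral of $(f^+_b)^*$ conjugated appropriately. Once (1), (2), (3) are each shown equivalent to the corresponding pair of scalar conditions, the scalar Theorem~\ref{GHJH23point1} closes the loop. The one point requiring a little care — and the place I expect to be the main obstacle — is the bookkeeping of complex conjugation in the $p^+$-slot: $f^+$ itself is anti-holomorphic (it is $(f^+)^*$ that lies in $Hol(D)$), so one must consistently apply the scalar results to $(f^+)^*$ and then conjugate, checking that testing $f^+$ against $\phi$ corresponds to testing $(f^+)^*$ against $\overline{\phi}$, that $|f^+|=|(f^+)^*|$, and that the Poisson kernel $P_r$ is real so conjugation passes through the pairing $\langle\,\cdot\,,P_r(\theta-\cdot)\rangle$ cleanly. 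None of this is deep, but it is exactly the step where a sign or a missing star would break the statement, so I would write it out explicitly.
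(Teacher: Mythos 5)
Your proposal is correct and follows essentially the same route as the paper: the paper's proof also works component-by-component through the idempotent decomposition, using the estimates \eqref{bcbasicestimates}, the fact that $(f^+)^*,f^-\in Hol(D)$, the identity $|f^+|=|(f^+)^*|$, and the conjugation bookkeeping in the $p^+$-slot (testing $(f^+)^*$ against real-valued $\phi$ and conjugating) before invoking the scalar Theorem~\ref{GHJH23point1}. The only cosmetic difference is that the paper organizes the argument as a cycle $(3)\Rightarrow(1)\Rightarrow(2)\Rightarrow(3)$ rather than proving each bicomplex condition equivalent to the pair of scalar conditions, but the content is identical.
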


\begin{proof}
Suppose there exists constants $C>0$, $\alpha \geq 0$, such that 
    \[
        ||f(re^{i\theta})||_\bc \leq \frac{C}{(1-r)^\alpha},
    \]
    for $0 \leq r < 1$. So, by \eqref{stareqn},
    \begin{align*}
        |f^\pm(re^{i\theta})| \leq \sqrt{2}||f(re^{i\theta})||_\bc \leq  \frac{\sqrt{2}C}{(1-r)^\alpha}.
    \end{align*}
    for all $0 \leq r < 1$. Since $f \in Hol(D,\bc)$, it follows by Theorem \ref{thm: bchardyrep} that $(f^+)^*,f^- \in Hol(D)$. Since $|(f^+)^*(re^{i\theta})| = |f^+(re^{i\theta})|$, for all $re^{i\theta}$, it follows, by Theorem \ref{GHJH23point1}, that $(f^+)^*_b, f^-_b$ exist. Since
    \[
       \lim_{r \nearrow 1} \int_0^{2\pi} (f^+)^*(re^{i\theta}) \, \phi(\theta) \,d\theta = \left(\lim_{r \nearrow 1} \int_0^{2\pi} f^+(re^{i\theta}) \, \phi(\theta) \,d\theta ,\right)^*
    \]
    for $\phi \in C^\infty(\p D,\mathbb{R})$, it follows that $f^+_b$ exists. Thus, for every $\phi \in C^\infty(\p D)$, 
    \[
\lim_{r \nearrow 1} \int_0^{2\pi} f(re^{i\theta}) \, \phi(\theta) \,d\theta = p^+\lim_{r \nearrow 1} \int_0^{2\pi} f^+(re^{i\theta}) \, \phi(\theta) \,d\theta + p^-\lim_{r \nearrow 1} \int_0^{2\pi} f^-(re^{i\theta}) \, \phi(\theta) \,d\theta ,
    \]
    and the sum of limits on the right hand exists. Hence, $f_b$ exists.

    Suppose $f_b$ exists. Since, for every $\phi \in C^\infty(\p D)$, we have
    \[
         \int_0^{2\pi} f(re^{i\theta}) \, \phi(\theta) \,d\theta = p^+ \int_0^{2\pi} f^+(re^{i\theta}) \, \phi(\theta) \,d\theta + p^-\int_0^{2\pi} f^-(re^{i\theta}) \, \phi(\theta) \,d\theta, 
    \]
    for every $r$, and
    \[
        \left|\int_0^{2\pi} f^\pm(re^{i\theta}) \, \phi(\theta) \,d\theta \right| \leq \sqrt{2} \left|\left|\int_0^{2\pi} f(re^{i\theta}) \, \phi(\theta) \,d\theta \right|\right|_\bc,
    \]
    for every $r$, it follows that 
    \[
    \left|\lim_{r\nearrow 1}\int_0^{2\pi} f^\pm(re^{i\theta}) \, \phi(\theta) \,d\theta \right| \leq \sqrt{2} \left|\left|\lim_{r \nearrow 1}\int_0^{2\pi} f(re^{i\theta}) \, \phi(\theta) \,d\theta \right|\right|_\bc,
    \]
    and the limit on the right hand side exists. Thus, $f^\pm_b$ exist. By similar reasoning as above, $(f^+)^*_b$ exists. Since $(f^+)^*, f^- \in Hol(D)$, it follows, by Theorem \ref{GHJH23point1}, that 
    \[
        (f^+)^*(re^{i\theta}) = \frac{1}{2\pi} \langle (f^+)^*_b, P_r(\theta - \cdot) \rangle
    \]
    and 
    \[
        f^-(re^{i\theta}) = \frac{1}{2\pi} \langle f^-_b, P_r(\theta - \cdot) \rangle.
    \]
    Since $f= p^+ f^+ + p^- f^-$, it follows that
    \begin{align*}
        f(re^{i\theta}) = p^+\left( \frac{1}{2\pi} \langle (f^+)^*_b, P_r(\theta - \cdot) \rangle\right)^* + p^- \frac{1}{2\pi} \langle f^-_b, P_r(\theta - \cdot) \rangle.
    \end{align*}

    Finally, suppose that 
    \[
f(re^{i\theta}) = p^+\left( \frac{1}{2\pi} \langle (f^+)^*_b, P_r(\theta - \cdot) \rangle\right)^* + p^- \frac{1}{2\pi} \langle f^-_b, P_r(\theta - \cdot) \rangle.
    \]
    Since $(f^+)^*, f^- \in Hol(D)$ and are representable as distributional pairings against the Poisson integral, it follows, by Theorem \ref{GHJH23point1}, that there exist constants $C_\pm > 0$ and $\alpha_\pm \geq 0$ such that 
    \[
        |(f^+)^*(re^{i\theta})| \leq \frac{C_+}{(1-r)^{\alpha_+}}
    \]
    for every $re^{i\theta} \in D$, 
    and
    \[
|f^-(re^{i\theta})| \leq \frac{C_-}{(1-r)^{\alpha_-}},
    \]
    for every $re^{i\theta} \in D$. Thus, 
    \begin{align*}
        ||f(re^{i\theta})||_{\bc} 
        &\leq 2^{-1/2}\left( |f^+(re^{i\theta})| + |f^-(re^{i\theta})|\right) \\
        &=2^{-1/2}\left( |(f^+)^*(re^{i\theta})| + |f^-(re^{i\theta})|\right) \\
        &\leq 2^{-1/2}\left( \frac{C_+}{(1-r)^{\alpha_+}} + \frac{C_-}{(1-r)^{\alpha_-}}\right)\\
        &\leq \frac{C}{(1-r)^\alpha},
    \end{align*}
    where $C:= 2\max\{2^{-1/2}C_+, 2^{-1/2}C_-\}$ and $\alpha := \max\{\alpha_+, \alpha_-\}$, for every $re^{i\theta} \in D$. 

\end{proof}

\section{Bicomplex Vekua-Hardy Spaces}\label{BCVekuaHardyspaces}

In this section, we define the bicomplex Vekua-Hardy spaces and show these classes of functions recover many of the properties of the complex Vekua-Hardy spaces. 

\subsection{Definition}

\begin{deff}
    For $A, B : D \to \bc$, we define $H_{A,B}(D, \bc)$ to be the collection of functions $f: D \to\bc$ that solve the $\bc$-Vekua equation 
    \[
        \dbar f = Af + B\overline{f}.
    \]
\end{deff}

\begin{deff}
    Let $0 < p < \infty$. We define the bicomplex Hardy spaces $\hpab$ to be those functions $f \in \hab$ such that $||f||_{H^p_{\bc}} < \infty$.
\end{deff}

\subsection{Representation}
With the definitions given above, we show that solutions to the $\bc$-Vekua equation are representable in three different forms. We first show that solutions of the $\bc$-Vekua equation have representations similar to those of solutions of the complex Vekua equation and use them to connect the bicomplex Vekua-Hardy spaces to the bicomplex holomorphic-Hardy spaces. The third representation is specific to solutions of the $\bc$-Vekua equation and inspired by a similar structure for $\bc$-holomorphic functions. 

The next proposition shows that solutions of the $\bc$-Vekua equation are representable by a ``representation of the second kind,'' see \cite{Vek} for the corresponding representation in the complex setting.
\begin{prop}[Proposition 11 \cite{FundBicomplex}, Proposition 13 \cite{BCBergman}, see also \cite{Hodge}]\label{Prop: bcrepsecond}
    Every solution $w: D \to \bc$ of 
    \[
        \dbar w = Aw + B\overline{w},
    \]
    with $\dbar w \in L^1(D, \bc)$, is representable as 
    \[
        w = \varphi + T_\bc(Aw + B\overline{w}),
    \]
    where $\varphi \in \holb$. 
\end{prop}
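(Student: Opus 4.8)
The plan is to exploit the right-inverse property of the bicomplex Theodorescu operator $T_\bc$ from Theorem~\ref{Thm: bctoperator}. Set $w$ a solution of $\dbar w = Aw + B\overline{w}$ with $\dbar w = Aw + B\overline{w} \in L^1(D,\bc)$, and define $\varphi := w - T_\bc(Aw + B\overline{w})$. The whole content of the proposition is then the single assertion that $\varphi \in \holb$, i.e.\ that $\dbar \varphi = 0$.

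First I would apply $\dbar$ to the defining expression for $\varphi$. By linearity of $\dbar$ (immediate from Definition~\ref{bcdbardef}) and by Theorem~\ref{Thm: bctoperator}, which gives $\dbar\, T_\bc(g) = g$ for any $g \in L^1(D,\bc)$, applied with $g = Aw + B\overline{w}$, we get
\[
    \dbar \varphi = \dbar w - \dbar\, T_\bc(Aw + B\overline{w}) = (Aw + B\overline{w}) - (Aw + B\overline{w}) = 0.
\]
Hence $\varphi \in \holb$, and rearranging $\varphi = w - T_\bc(Aw+B\overline w)$ yields exactly $w = \varphi + T_\bc(Aw + B\overline{w})$, as claimed. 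This also needs the (trivial) observation that $T_\bc(Aw + B\overline w)$ is well defined, which is guaranteed by Theorem~\ref{Thm: bctoperator} once we know $Aw + B\overline w \in L^1(D,\bc)$ — and that is precisely the standing hypothesis $\dbar w \in L^1(D,\bc)$, since $\dbar w = Aw + B\overline w$.

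There is essentially no obstacle here: the argument is the standard "subtract off the Theodorescu potential" trick, transplanted verbatim from the complex Vekua setting (Vekua's representation of the second kind) to the bicomplex setting, where the only thing one needs is that $T_\bc$ is a right inverse of $\dbar$ and that $\dbar$ is linear. The one point worth a sentence of care is that the differentiation $\dbar\, T_\bc(g) = g$ in Theorem~\ref{Thm: bctoperator} should be read in the distributional (or a.e.) sense, so the identity $\dbar\varphi = 0$ holds in that sense; then, since $\varphi$ is a $\bc$-valued solution of the homogeneous equation $\dbar\varphi = 0$, elliptic regularity (equivalently, the componentwise reduction via $p^\pm$ to the classical Cauchy–Riemann operators, as recorded after Definition~\ref{bcdbardef}) promotes $\varphi$ to a genuine $\bc$-holomorphic function. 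No uniqueness claim is made in the statement, so nothing further is required.
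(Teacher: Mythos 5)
Your proof is correct and is essentially identical to the paper's: both define $\varphi := w - T_\bc(Aw+B\overline{w})$ and use the right-inverse property $\dbar\, T_\bc(g)=g$ from Theorem~\ref{Thm: bctoperator} to conclude $\dbar\varphi=0$. The extra remarks on well-definedness and regularity are fine but not needed beyond what the paper records.
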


The next theorem is similar to Theorem 5.3 from \cite{BCAtomic} (which is itself a bicomplex extension of Theorem 2.10 from \cite{WB}). It shows that there is a direct relationship between the $\bc$-holomorphic Hardy spaces and the $\bc$-Vekua-Hardy spaces.

\begin{theorem}\label{bcvekhardyrepgeneral}
    Let $0 < p < \infty$. For every $f \in \hab$ such that $\dbar f= Af + B\overline{f} \in L^q(D, \bc)$, $q>2$, with representation $f = \varphi + T_\bc(Af + B\overline{f})$, $ f \in \hpab$ if and only if $\varphi \in \hpb$. The result holds when $1 < q \leq 2$ so long as $p$ satisfies $p < \frac{q}{2-q}$. 
\end{theorem}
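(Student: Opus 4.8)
The plan is to reduce everything to the already-proved statements about the bicomplex Theodorescu operator and the $\bc$-holomorphic Hardy spaces. First I would use the idempotent decomposition $f = p^+ f^+ + p^- f^-$, together with Proposition \ref{everybchasplusandminus}, to translate the hypothesis $\dbar f = Af + B\overline{f} \in L^q(D,\bc)$ and the representation $f = \varphi + T_\bc(Af + B\overline{f})$ into their components. Write $g := Af + B\overline{f}$; by Proposition \ref{propLqiff}, $g^+, g^- \in L^q(D)$, and since $|(g^+)^*| = |g^+|$ pointwise, also $(g^+)^* \in L^q(D)$. The representation $f = \varphi + T_\bc(g)$ splits componentwise; here I would record, from the idempotent expansion of $T_\bc$ given just after Theorem \ref{Thm: bctoperator} (or directly from that theorem's formula), how $T_\bc(g)^\pm$ relates to the classical complex operator $T$ applied to $g^\pm$ (up to passing to complex conjugates in the $p^+$ slot), so that $(T_\bc g)^-$ and $((T_\bc g)^+)^*$ are each of the form $T$ of an $L^q(D)$ function, hence by Theorem \ref{onepointnineteen} (if $q>2$) lie in $C^{0,\alpha}(\overline D)$, or by Theorems \ref{Vekonepointtwentysix}--\ref{onefoureight} (if $1<q\le 2$) lie in $L^\gamma$ on each circle with the right bounds. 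This component bookkeeping is routine but is the step that needs to be done carefully.

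Next I would invoke the known component characterizations. By Theorem \ref{thm: bchardyrep}, $\varphi = p^+\varphi^+ + p^-\varphi^- \in \hpb$ iff $(\varphi^+)^*, \varphi^- \in H^p(D)$. Thus the claim $f \in \hpab \iff \varphi \in \hpb$ reduces to showing: $\|f\|_{H^p_\bc} < \infty \iff (\varphi^+)^*, \varphi^- \in H^p(D)$. Using \eqref{bcbasicestimates}, $\|f\|_{H^p_\bc} < \infty$ is equivalent to $f^+, f^- \in H^p(D)$ in the Hardy-growth sense, which (since $|(f^+)^*| = |f^+|$) is equivalent to $(f^+)^*, f^- \in H^p_{\text{growth}}(D)$. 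Now apply the complex-valued result Theorem \ref{thm: nonhomogHpbvcon} together with the representation-of-the-second-kind reasoning behind Theorem 2.10 of \cite{WB} (as extended in Theorem 5.3 of \cite{BCAtomic}): in each slot, $(f^+)^* = (\varphi^+)^* + \text{(an error term that is $T$ of an $L^q$ function)}$, and the error term is bounded (case $q>2$) or has controlled $L^\gamma$ circle-norms (case $1<q\le 2$, $p < \frac{q}{2-q}$), so it has finite $H^p$ norm and the two sides of the equivalence for that slot match. Doing this in both the $p^+$ and $p^-$ slots and reassembling via Theorem \ref{thm: bchardyrep} gives the theorem.

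The main obstacle I expect is the bookkeeping in the $p^+$ slot: the bicomplex conjugate $\overline{f}$ mixes the $\sca$ and $\vect$ parts, so one must check that $g^+ = (Af + B\overline f)^+$ really is an $L^q$ function in terms of the coefficients $A, B$ and $f$, and that the $T_\bc$-operator's action on the $p^+$ component lands on $\overline{(\,\cdot\,)}$-type data compatible with $H^p(D)$ as defined via $\p/\p z^* = 0$ (recall $(w^+)^*$, not $w^+$ itself, is the holomorphic object). Once the translation $\dbar w = 0 \iff (w^+)^*, w^- \in Hol(D)$ (stated in the excerpt right after Definition \ref{bcdbardef}) is used consistently, and the error-term estimates are pulled verbatim from Theorem \ref{bctbehavior} (for $q>2$, boundedness; for $1<q\le2$, the circle-$L^\gamma$ bounds with $\gamma$ up to $\frac{q}{2-q}$, which is exactly why the restriction $p < \frac{q}{2-q}$ appears), the argument closes. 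In short: decompose into idempotent components, quote Theorems \ref{thm: bchardyrep}, \ref{thm: nonhomogHpbvcon}, and \ref{bctbehavior} slotwise, and reassemble.
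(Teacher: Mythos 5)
Your proposal is correct in substance but takes a different route from the paper. The paper never passes to the idempotent components for this theorem: it works directly with the bicomplex quasi-norm, writing $f = \varphi + T_\bc(Af+B\overline{f})$ and $\varphi = f - T_\bc(Af+B\overline{f})$, applying the quasi-triangle inequality for $\int_0^{2\pi}\|\cdot\|_\bc^p\,d\theta$, and then quoting Theorem \ref{bctbehavior} to control the $T_\bc$ term uniformly in $r$ (boundedness on $\overline{D}$ when $q>2$; the $r$-uniform circle $L^\gamma$ bound with $\gamma$ up to $\frac{q}{2-q}$ when $1<q\le 2$, which is where $p<\frac{q}{2-q}$ enters). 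Your slotwise argument reaches the same conclusion, but at the cost of the bookkeeping you yourself flag: you need the componentwise form of $T_\bc$ (which the paper does not actually record after Theorem \ref{Thm: bctoperator}, only the idempotent expansions of $\p$ and $\dbar$ are given there), including the conjugation in the $p^+$ slot. That step is avoidable: Theorem \ref{bctbehavior} already gives the needed estimates for $T_\bc(Af+B\overline{f})$ at the level of $\|\cdot\|_\bc$, and \eqref{bcbasicestimates} transfers them to the components, so you never need to identify $(T_\bc g)^\pm$ with $T$ of anything. Two smaller points: Theorem \ref{thm: nonhomogHpbvcon} is a nontangential-boundary-value statement and is not the right tool for the equivalence $f\in H^p \Leftrightarrow \varphi\in H^p$ in each slot; what you actually use there is the direct norm estimate (the ``second kind'' reasoning you also cite), so drop that reference. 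And your reduction via Theorem \ref{thm: bchardyrep} is fine but redundant, since the equivalence $\varphi\in\hpb\Leftrightarrow(\varphi^+)^*,\varphi^-\in H^p(D)$ is only needed if you insist on working slotwise. What your approach buys is a cleaner conceptual link to the complex-valued theory; what the paper's approach buys is brevity and no dependence on the componentwise structure of $T_\bc$.
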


\begin{proof}
By hypothesis, $\dbar f= Af + B\overline{f} \in L^1(D, \bc)$, so by Proposition \ref{Prop: bcrepsecond}, 
\[
        f = \varphi + T_\bc(Af + B\overline{f}).
\]
Suppose that $\varphi \in H^p(D,\bc)$. Then, for $r \in (0,1)$, we have
\begin{align}
 \int_0^{2\pi} ||f(re^{i\theta})||_{\bc}^p \,d\theta 
 &= \int_0^{2\pi} ||\varphi(re^{i\theta}) + T_\bc(Af + B\overline{f})(re^{i\theta})||_{\bc}^p \,d\theta \nonumber \\ 
 &\leq C_p\left( \int_0^{2\pi} ||\varphi(re^{i\theta})||_{\bc}^p + \int_0^{2\pi} ||T_\bc(Af + B\overline{f})(re^{i\theta})||_{\bc}^p \,d\theta \right) \nonumber\\
  &\leq C_p\left( ||\varphi||_{H^p_\bc}^p + \int_0^{2\pi} ||T_\bc(Af + B\overline{f})(re^{i\theta})||_{\bc}^p \,d\theta \right), \label{returnpoitbcvekhardy}
\end{align}
where $C_p$ is a constant that depends on only $p$. By Theorem \ref{bctbehavior}, since $ Af+B\overline{f}\in L^q(D,\bc)$, $q>2$, it follows that $T_\bc(Af+B\overline{f}) \in C^{0,\alpha}(\overline{D},\bc)$. Thus, there exists $M>0$ such that 
\[
        ||T_\bc(Af+B\overline{f})(z)||^p_{\bc} \leq M,
\]
for every $z \in \overline{D}$. Hence, 
\begin{align*}
C_p\left( ||\varphi||_{H^p_\bc}^p + \int_0^{2\pi} ||T_\bc(Af + B\overline{f})(re^{i\theta})||_{\bc}^p \,d\theta \right)
&\leq C_p\left( ||\varphi||_{H^p_\bc}^p + 2\pi M \right)< \infty.
\end{align*}
Since there is no dependence on $r$ in the right hand side of the above inequality, we have 
\begin{align*}
||f||_{H^p_\bc}^p &\leq C_p\left( ||\varphi||_{H^p_\bc}^p + 2\pi M \right)< \infty.
\end{align*}
If $1 < q \leq 2$ and $p$ satisfies $p < \frac{q}{2-q}$, then returning to (\ref{returnpoitbcvekhardy}) we have
\begin{align*}
C_p\left( ||\varphi||_{H^p_\bc}^p + \int_0^{2\pi} ||T_\bc(Af + B\overline{f})(re^{i\theta})||_{\bc}^p \,d\theta \right)
&\leq C_p\left( ||\varphi||_{H^p_\bc}^p + C||T_\bc(Af+B\overline{f})||^p_{L^q(D)} \,d\theta \right) < \infty,
\end{align*}
by Theorem \ref{bctbehavior}. Since the right hand side has no dependence on $r$, we have
\[
||f||_{H^p_\bc}^p \leq C_p\left( ||\varphi||_{H^p_\bc}^p + C||T_\bc(Af+B\overline{f})||^p_{L^q(D)} \,d\theta \right) < \infty.
\]
In either case $f \in H^p_{A,B}(D,\bc)$. 

Now, suppose that $f = \varphi + T_\bc(Af + B\overline{f}) \in H^p_{A,B}(D,\bc)$. Observe that, for $r \in (0,1)$, we have
\begin{align}
    \int_0^{2\pi} ||\varphi(re^{i\theta})||_\bc^p \,d\theta 
    &= \int_0^{2\pi} ||f(re^{i\theta}) - T_\bc(Af+B\overline{f})(re^{i\theta})||_\bc^p \,d\theta \nonumber \\
    &\leq C_p \left( \int_0^{2\pi} ||f(re^{i\theta})||_{\bc}^p \,d\theta + \int_0^{2\pi} || T_\bc(Af+B\overline{f})(re^{i\theta})||_\bc^p \,d\theta\right)  \label{returnpointagain}\\
    &\leq C_p\left( ||f||_{H^p_\bc}^p + M 2\pi\right)< \infty,\nonumber
\end{align}
where $C_p$ is a constant that depends on only $p$ and $M$ is a constant that bounds $||T_\bc(Af + B\overline{f})(z)||_\bc^p$, for all $z \in \overline{D}$. So, 
\[
||\varphi||_{H^p_\bc}^p \leq C_p\left( ||f||_{H^p_\bc}^p + M 2\pi\right)< \infty,
\]
and $\varphi \in H^p(D, \bc)$. If $1 < q \leq 2$ and $p$ satisfies $p < \frac{q}{2-q}$, then returning to (\ref{returnpointagain}) and appealing to Theorem \ref{bctbehavior} once more, we have 
\begin{align*}
&C_p \left( \int_0^{2\pi} ||f(re^{i\theta})||_{\bc}^p \,d\theta + \int_0^{2\pi} || T_\bc(Af+B\overline{f})(re^{i\theta})||_\bc^p \,d\theta\right) \\
&\leq C_p \left( \int_0^{2\pi} ||f(re^{i\theta})||_{\bc}^p \,d\theta + C||T_\bc(Af+B\overline{f})||^p_{L^q(D)}\right)  < \infty.
\end{align*}
Therefore, 
\[
    ||\varphi||^p_{H^p_\bc} \leq C_p \left( \int_0^{2\pi} ||f(re^{i\theta})||_{\bc}^p \,d\theta + C||T_\bc(Af+B\overline{f})||^p_{L^q(D)}\right)  < \infty,
\]
and $\varphi \in H^p(D,\bc)$. 

\end{proof}

For the next result, and some that follow, we specialize to the case that $B \equiv 0$. The next theorem shows that solutions to certain $\bc$-Vekua equations have a ``representation of the first kind'' or ``similarity principle'' representation like their complex counterparts, see Theorem \ref{VekHardyCombined}.

\begin{theorem}\label{thm: bcvekbzerorep}
Let $A \in L^1(D, \bc)$. Every $f \in H_{A,0}(D,\bc)$ is representable as 
\[
    f(z) = \varphi(z)e^{\phi(z)},
\]
where $\varphi \in \holb$ and 
$
    \phi(z) = T_\bc(A)(z)$.
In the case that $A \in L^q(D,\bc)$, where $q>2$, $T_\bc(A) \in C^{0,\alpha}(\overline{D},\bc)$, for $\alpha = \frac{q-2}{q}$. 
\end{theorem}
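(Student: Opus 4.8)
The plan is to mimic the complex-variable proof of the ``Basic Lemma'' (Theorem \ref{complexsimprin}), exploiting the fact that $T_\bc$ is a right inverse of $\dbar$ exactly as $T$ is for $\partial/\partial z^*$. First I would set $\phi(z) := T_\bc(A)(z)$; since $A \in L^1(D,\bc)$, Theorem \ref{Thm: bctoperator} guarantees $\phi$ exists and $\dbar\phi = A$. Then I would define $\varphi := f e^{-\phi}$ and show that $\varphi \in \holb$, i.e. $\dbar\varphi = 0$. This is the computational heart of the argument: applying the (bicomplex) Leibniz rule for $\dbar$ one gets
\[
    \dbar\varphi = (\dbar f)e^{-\phi} + f\,\dbar\!\left(e^{-\phi}\right) = (\dbar f)e^{-\phi} - f(\dbar\phi)e^{-\phi} = (Af - fA)e^{-\phi} = 0,
\]
using $\dbar f = Af$ (the $B\equiv 0$ case of the $\bc$-Vekua equation), $\dbar\phi = A$, and the commutativity of the bicomplex product, so that $Af = fA$. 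Rearranging $\varphi = f e^{-\phi}$ gives $f = \varphi e^{\phi}$, which is the claimed representation.

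I would be slightly careful about two points. First, to justify $\dbar(e^{-\phi}) = -(\dbar\phi)e^{-\phi}$ and the Leibniz rule I should remark that $\dbar$ is a first-order differential operator that is $\C$-linear and satisfies the product rule on products of bicomplex-valued functions — this follows immediately from the coordinate expression in Definition \ref{bcdbardef} and the idempotent decomposition $\dbar = p^+\partial/\partial z + p^-\partial/\partial z^*$, which reduces everything to the classical complex-variable Leibniz rule applied componentwise. Second, one needs $e^{-\phi}$ to make sense as a bicomplex-valued function and to be invertible with inverse $e^{\phi}$; the bicomplex exponential of any $\bc$-valued function is defined componentwise via the idempotent representation (or by the usual power series, which converges in $\bc$), and $e^{-\phi}e^{\phi} = e^{0} = 1$ since the two factors commute. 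These are standard facts about $\bc$, cited from \cite{FundBicomplex, BCBergman}.

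For the final sentence of the statement, when $A \in L^q(D,\bc)$ with $q > 2$, I would simply invoke the last assertion of Theorem \ref{bctbehavior}: it says precisely that $T_\bc(A) = T(A) \in C^{0,\alpha}(\overline{D},\bc)$ with $\alpha = \frac{q-2}{q}$, and since $L^q(D,\bc) \subset L^1(D,\bc)$ on the bounded domain $D$, the earlier part of the theorem already applies to give the representation $f = \varphi e^{\phi}$. I expect no genuine obstacle here; the only mild subtlety is making the bicomplex Leibniz rule and the invertibility of $e^{\phi}$ explicit, since $\bc$ has zero divisors — but $e^{\phi}$ is never a zero divisor (its idempotent components $e^{\phi^\pm}$ are nonzero complex numbers), so multiplication by $e^{\pm\phi}$ is a genuine inverse pair and the argument goes through cleanly.
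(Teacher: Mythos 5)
Your proposal is correct and is essentially the paper's own argument: both set $\phi = T_\bc(A)$, use that $e^{\phi}$ is invertible because it is never a zero divisor, and verify $\dbar(f e^{-\phi})=0$ via the Leibniz rule together with commutativity of the bicomplex product (the paper phrases this as a quotient rule applied to $w/e^{\phi}$, which is the same computation), then cite Theorem \ref{bctbehavior} for the H\"older continuity when $A\in L^q(D,\bc)$, $q>2$. No substantive difference.
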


\begin{proof}

 The representation is a consequence of Theorem 14 of \cite{FundBicomplex}. See also Remark 38 of \cite{BCBergman}.

The inclusion of $\phi$ in the specified class is a consequence of Theorem \ref{bctbehavior}.
\end{proof}

Using the representation from the last theorem, we prove the following generalization of statement (2) of Theorem \ref{VekHardyCombined} for functions in $H^p_{A,0}(D,\bc)$.

\begin{theorem}\label{thm: bcvekhardybzerorep}
Let $0 < p < \infty$ and $A \in L^q(D, \bc)$, $q>2$. For every $f \in H_{A,0}(D,\bc)$, $f = \varphi e^\phi\in H^{p}_{A,0}(D,\bc)$ if and only if $\varphi \in H^p(D,\bc)$. 
\end{theorem}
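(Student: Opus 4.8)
The plan is to reduce this to the $\bc$-holomorphic Hardy space statement of Theorem~\ref{thm: bchardyrep} together with the fact, coming from Theorem~\ref{thm: bcvekbzerorep}, that $\phi = T_\bc(A) \in C^{0,\alpha}(\overline{D},\bc)$ since $A \in L^q(D,\bc)$ with $q > 2$. The key point is that $e^\phi$ and $e^{-\phi}$ are both bounded in the bicomplex norm on $\overline{D}$: continuity of $\phi$ on the compact set $\overline{D}$ gives a bound $\|\phi(z)\|_\bc \le M$, and since $e^v$ is never a zero divisor (as noted in the proof of Theorem~\ref{thm: bcvekbzerorep}) we have $e^{-\phi} = (e^\phi)^{-1}$ with $\|e^{\pm\phi}(z)\|_\bc$ bounded above by some constant $K$ for all $z \in \overline{D}$; these bounds follow by writing $e^{\pm\phi} = p^+ e^{\pm\phi^+} + p^- e^{\pm\phi^-}$ and using the estimates \eqref{bcbasicestimates} on the idempotent components.

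First I would handle the forward direction. Assume $\varphi \in H^p(D,\bc)$. For each $r \in (0,1)$, using \eqref{stareqn} and the uniform bound $\|e^{\phi}(z)\|_\bc \le K$,
\[
\int_0^{2\pi} \|f(re^{i\theta})\|_\bc^p \,d\theta = \int_0^{2\pi} \|\varphi(re^{i\theta}) e^{\phi(re^{i\theta})}\|_\bc^p \,d\theta \le (\sqrt{2}\,K)^p \int_0^{2\pi} \|\varphi(re^{i\theta})\|_\bc^p \,d\theta \le (\sqrt{2}\,K)^p \|\varphi\|_{H^p_\bc}^p.
\]
Since the right-hand side is independent of $r$, taking the supremum gives $\|f\|_{H^p_\bc} < \infty$, i.e. $f \in H^p_{A,0}(D,\bc)$.

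For the converse, assume $f \in H^p_{A,0}(D,\bc)$. Since $\varphi = f e^{-\phi}$ with $\|e^{-\phi}(z)\|_\bc \le K$ on $\overline{D}$, the identical estimate gives
\[
\int_0^{2\pi} \|\varphi(re^{i\theta})\|_\bc^p \,d\theta = \int_0^{2\pi} \|f(re^{i\theta}) e^{-\phi(re^{i\theta})}\|_\bc^p \,d\theta \le (\sqrt{2}\,K)^p \int_0^{2\pi} \|f(re^{i\theta})\|_\bc^p \,d\theta \le (\sqrt{2}\,K)^p \|f\|_{H^p_\bc}^p,
\]
so $\|\varphi\|_{H^p_\bc} < \infty$. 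Finally, by Theorem~\ref{thm: bcvekbzerorep}, $\varphi \in \holb$, hence $\varphi \in H^p(D,\bc)$, completing the proof.

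\textbf{Main obstacle.} The only subtlety — and it is minor — is verifying that $e^{\pm\phi}$ is genuinely uniformly bounded on $\overline{D}$ in the bicomplex norm; this requires passing through the idempotent decomposition to see that $\|e^{\pm\phi}(z)\|_\bc$ is controlled by $e^{|\phi^+(z)|} + e^{|\phi^-(z)|}$ (up to constants), which is bounded because $\phi^\pm$ are continuous on the compact set $\overline{D}$. Everything else is a routine application of the basic estimates \eqref{bcbasicestimates}, \eqref{stareqn} and the already-established representation $f = \varphi e^{\phi}$.
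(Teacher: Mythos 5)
Your proposal is correct and follows essentially the same route as the paper: both directions reduce to the representation $f=\varphi e^{\phi}$ of Theorem~\ref{thm: bcvekbzerorep}, the submultiplicativity estimate \eqref{stareqn}, and the uniform two-sided bounds on the idempotent components $e^{\phi^{\pm}}$ coming from $\phi\in C^{0,\alpha}(\overline{D},\bc)$ (the paper phrases the converse via $m\le |(e^{\phi})^{\pm}|\le M$ and the formula for $(e^{\phi})^{-1}$, which is exactly your bound on $\|e^{-\phi}\|_{\bc}$). The one subtlety you flag — that $e^{\pm\phi}$ is genuinely bounded in $\|\cdot\|_{\bc}$ on $\overline{D}$ because $e^{\phi^{\pm}}$ are nonvanishing continuous functions on a compact set — is precisely the point the paper's proof also makes, so there is no gap.
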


\begin{proof}
Suppose that $f  = \varphi e^\phi \in H_{A,0}(D,\bc)$ and $\varphi \in H^p(D,\bc)$. Then, for each $r \in (0,1)$, we have
\begin{align*}
    \int_0^{2\pi} ||f(re^{i\theta})||_{\bc}^p \,d\theta
    &=\int_0^{2\pi} ||\varphi(re^{i\theta})e^{\phi(re^{i\theta})}||_{\bc}^p \,d\theta\\
    &\leq 2^{p/2}\int_0^{2\pi} ||\varphi(re^{i\theta})||^p_\bc \,||e^{\phi(re^{i\theta})}||_{\bc}^p \,d\theta.
\end{align*}
Since $e^\phi \in C^{0,\alpha}(\overline{D},\bc)$, it follows that there exists $M>0$ such that $||e^{\phi(re^{i\theta})}||_{\bc} < M$, for all $re^{i\theta} \in \overline{D}$. Hence, 
\begin{align*}
2^{p/2}\int_0^{2\pi} ||\varphi(re^{i\theta})||^p_\bc \,||e^{\phi(re^{i\theta})}||_{\bc}^p \,d\theta
&\leq 2^{p/2}M^p\int_0^{2\pi} ||\varphi(re^{i\theta})||^p_\bc  \,d\theta\\
&\leq 2^{p/2}M^p ||\varphi||^p_{H^p_{\bc}} \,d\theta < \infty.
\end{align*}
Thus, 
\[
\sup_{0< r < 1} \int_0^{2\pi} ||f(re^{i\theta})||_{\bc}^p \,d\theta \leq 2^{p/2}M^p ||\varphi||^p_{H^p_{\bc}} \,d\theta < \infty,
\]
and $f \in H^p_{A,0}(D,\bc)$. 

Now, suppose that $f = \varphi e^\phi \in H^p_{A,0}(D,\bc)$. Since $\phi \in C^{0,\alpha}(\overline{D},\bc)$, by Theorem \ref{thm: bcvekbzerorep}, it follows that $(e^\phi)^+, (e^\phi)^- \in C^{0,\alpha}(\overline{D})$, where $e^\phi = p^+ (e^\phi)^+ + p^- (e^\phi)^-$. Since continuous functions on compact sets are bounded, it follows that there exist $M_+,M_-, m_+, m_- > 0$ such that 
\[
    m_+ \leq |(e^{\phi(re^{i\theta})})^+|\leq M_+
\]
and 
\[
    m_- \leq |(e^{\phi(re^{i\theta})})^-|\leq M_-,
\]
for all $re^{i\theta} \in \overline{D}$. Since $e^v$ is not a zero divisor for all $v \in \bc$,  it follows that $e^v$ is invertible, for all $v \in\bc$, see \cite{FundBicomplex}. Let $m:= \min\{m_+,m_-\}$ and $M:= \max\{M_+,M_-\}$. Then,
\[
m \leq |(e^{\phi(re^{i\theta})})^+|\leq M
\]
and
\[
m \leq |(e^{\phi(re^{i\theta})})^-|\leq M,
\]
for all $re^{i\theta} \in \overline{D}$. Hence, 
\[
\frac{1}{m} \geq \frac{1}{|(e^{\phi(re^{i\theta})})^+|}\geq \frac{1}{M}
\]
and
\[
\frac{1}{m} \geq \frac{1}{|(e^{\phi(re^{i\theta})})^-|}\geq \frac{1}{M},
\]
for all $re^{i\theta} \in \overline{D}$. Thus, for $r \in (0,1)$, we have
\begin{align*}
    \int_0^{2\pi} ||\varphi(re^{i\theta})||_\bc^p \,d\theta
    &= \int_0^{2\pi} ||\varphi(re^{i\theta})e^{\phi(re^{i\theta})} (e^{\phi(re^{i\theta})})^{-1}||_\bc^p \,d\theta \\
    &\leq 2^{p/2}\int_0^{2\pi} ||\varphi(re^{i\theta})e^{\phi(re^{i\theta})} ||_\bc^p \,||(e^{\phi(re^{i\theta})})^{-1}||_\bc^p \,d\theta \\
    &\leq \int_0^{2\pi} ||\varphi(re^{i\theta})e^{\phi(re^{i\theta})} ||_\bc^p \left( \frac{1}{|(e^{\phi(re^{i\theta})})^+|} + \frac{1}{|(e^{\phi(re^{i\theta})})^-|}\right)^p \,d\theta \\
     &\leq \frac{2^p}{m^p} \int_0^{2\pi} ||\varphi(re^{i\theta})e^{\phi(re^{i\theta})} ||_\bc^p  \,d\theta \\
     &= \frac{2^p}{m^p} \int_0^{2\pi} ||f(re^{i\theta}) ||_\bc^p  \,d\theta \\
     &\leq \frac{2^p}{m^p} || f||_{H^p_\bc}^p  < \infty.
\end{align*}
Therefore, 
\[
    ||\varphi||_{H^p_\bc}^p \leq C ||f||_{H^p_\bc}^p < \infty,
\]
where $C$ is a constant that depends only on $p$ and $A$, and $\varphi \in H^p(D,\bc)$.

\end{proof}

Using the above representation, we show that inclusion in certain Hardy spaces guarantees inclusion in a wide range of Lebesgue spaces. This extends statement (4) of Theorem \ref{VekHardyCombined} for the complex-valued Vekua-Hardy spaces and Proposition \ref{Prop: bchardymlessthan2p} for bicomplex-holomorphic Hardy spaces. 

\begin{theorem}\label{inBCVekHardypinLebm}
Let $p$ be a positive real number and $A\in L^q(D)$, $q>2$. Every function $w \in H^p_{A,0}(D,\bc)$ is an element of $L^m(D,\bc)$, for every $0 < m < 2p$. 
\end{theorem}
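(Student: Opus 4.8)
The plan is to exploit the multiplicative ``similarity principle'' representation of Theorem \ref{thm: bcvekbzerorep} in order to transfer the $L^m$-integrability already established for the bicomplex-holomorphic Hardy spaces (Proposition \ref{Prop: bchardymlessthan2p}) to the bicomplex Vekua-Hardy class. The guiding observation is that in the factorization $w=\varphi e^{\phi}$ the ``holomorphic part'' $\varphi$ carries all of the potentially bad growth near $\p D$, while the exponential factor $e^{\phi}$ is bounded on $\overline{D}$ because its exponent $\phi=T_\bc(A)$ lies in a H\"older class (this is precisely where $q>2$ enters).

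Concretely, I would proceed as follows. Since $A\in L^q(D)\subset L^q(D,\bc)$ with $q>2$, Theorem \ref{thm: bcvekbzerorep} gives, for any $w\in H_{A,0}(D,\bc)$, the representation $w=\varphi e^{\phi}$ with $\varphi\in\holb$ and $\phi=T_\bc(A)\in C^{0,\alpha}(\overline{D},\bc)$, $\alpha=\frac{q-2}{q}$. By Theorem \ref{thm: bcvekhardybzerorep}, the hypothesis $w\in H^p_{A,0}(D,\bc)$ is equivalent to $\varphi\in H^p(D,\bc)$, and then Proposition \ref{Prop: bchardymlessthan2p} yields $\varphi\in L^m(D,\bc)$ for every $0<m<2p$. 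Because $\phi$ is continuous on the compact set $\overline{D}$, there is a constant $M>0$ with $||e^{\phi(z)}||_\bc\le M$ for all $z\in\overline{D}$; combining this with the submultiplicative estimate \eqref{stareqn} gives the pointwise bound $||w(z)||_\bc=||\varphi(z)e^{\phi(z)}||_\bc\le\sqrt{2}\,M\,||\varphi(z)||_\bc$ on $D$. Raising to the $m$-th power and integrating over $D$ then produces $\iint_D||w(z)||_\bc^m\,dx\,dy\le(\sqrt{2}\,M)^m\iint_D||\varphi(z)||_\bc^m\,dx\,dy<\infty$, i.e.\ $w\in L^m(D,\bc)$, as claimed.

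I do not anticipate a genuine obstacle: the substantive work has been front-loaded into Theorems \ref{thm: bcvekbzerorep}--\ref{thm: bcvekhardybzerorep} and Proposition \ref{Prop: bchardymlessthan2p}. The only point deserving care is that the restriction $q>2$ is used essentially, since it is exactly what makes $T_\bc(A)$ H\"older continuous up to the boundary (via Theorem \ref{bctbehavior}), hence bounded there, so that multiplication by $e^{\phi}$ preserves $L^m$-integrability with no loss. (Under the weaker hypothesis $1<q\le 2$ one would instead have to control $||e^{\phi}||_\bc$ only in an $L^s$-sense and apply H\"older's inequality, at the cost of a restricted range of $m$; but that refinement is not asserted in the present statement.)
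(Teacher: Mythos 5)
Your proposal is correct and is essentially identical to the paper's own argument: both factor $w=\varphi e^{\phi}$ via Theorems \ref{thm: bcvekbzerorep} and \ref{thm: bcvekhardybzerorep}, invoke Proposition \ref{Prop: bchardymlessthan2p} to place $\varphi$ in $L^m(D,\bc)$, and then use the boundedness of $e^{\phi}$ on $\overline{D}$ together with the estimate \eqref{stareqn} to conclude. Your closing remark correctly identifies where $q>2$ is used, and no further changes are needed.
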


\begin{proof}
By Theorem \ref{thm: bcvekhardybzerorep}, if $w \in H^p_{A,0}(D,\bc)$, then $w = e^\phi \varphi$ and $\varphi \in H^p(D,\bc)$. Since $\varphi \in H^p(D,\bc)$, it follows, by Proposition \ref{Prop: bchardymlessthan2p}, that $\varphi \in L^m(D,\bc)$, for every $0 < m < 2p$. Since $A \in L^q(D)$, $q>2$, it follows that $e^\phi \in C^{0, \alpha}(\overline{D},\bc)$. So, there exists a positive real number $M$ such that 
\[
||e^{\phi(z)}||_{\bc} < M
\]
for all $z \in \overline{D}$. Hence, 
\begin{align*}
 \iint_D ||w(z)||_{\bc}^m \,dx\,dy
 &= \iint_D ||e^{\phi(z)} \varphi(z)||_{\bc}^m \,dx\,dy\\
 &\leq 2^{m/2}\iint_D ||e^{\phi(z)}||_{\bc}^m \,|| \varphi(z)||_{\bc}^m \,dx\,dy\\
 &\leq 2^{m/2}M^m\iint_D  || \varphi(z)||_{\bc}^m \,dx\,dy< \infty,
\end{align*}
for every $0 < m < 2p.$ Therefore, $w \in L^m(D,\bc)$, for every $0 < m < 2p$. 
\end{proof}

We recover an extension of statement (4) of Theorem \ref{VekHardyCombined} in the $B \not\equiv 0$ case so long as the hypotheses of Theorem \ref{bcvekhardyrepgeneral} are satisfied.

\begin{theorem}\label{genbcvekhardyinlebm}
Let $0 < p < \infty$. For every $f \in \hab$ such that $\dbar f= Af + B\overline{f} \in L^q(D, \bc)$, $q>2$, with representation $f = \varphi + T_\bc(Af + B\overline{f})$, $f$ is in $L^m(D,\bc)$ for every $0 < m < 2p$. The result holds when $1 < q \leq 2$ so long as $p$ satisfies $p < \frac{q}{2-q}$.
\end{theorem}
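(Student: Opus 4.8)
The plan is to split $f$ via its representation of the second kind and estimate the two summands in $L^m(D,\bc)$ separately. The hypotheses here are exactly those of Theorem \ref{bcvekhardyrepgeneral} (with $f \in \hpab$, mirroring the complex-valued Theorem \ref{inHardypinLebm}), so that theorem furnishes the decomposition $f = \varphi + T_\bc(Af+B\overline{f})$ with $\varphi \in \hpb$. By Proposition \ref{Prop: bchardymlessthan2p}, $\varphi \in L^m(D,\bc)$ for every $0 < m < 2p$; thus it remains only to show the Theodorescu term lies in $L^m(D,\bc)$ for every such $m$.

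For $T_\bc(Af+B\overline{f})$ I would split into the two ranges of $q$. If $q > 2$, Theorem \ref{bctbehavior} gives $T_\bc(Af+B\overline{f}) \in C^{0,\alpha}(\overline{D},\bc)$, hence this function is bounded on $\overline{D}$; since $D$ has finite Lebesgue measure, a bounded $\bc$-valued function belongs to $L^m(D,\bc)$ for every $m > 0$. If $1 < q \le 2$, Theorem \ref{bctbehavior} gives $T_\bc(Af+B\overline{f}) \in L^\gamma(D,\bc)$ for every $1 < \gamma < \frac{2q}{2-q}$. From the standing assumption $p < \frac{q}{2-q}$ we get $2p < \frac{2q}{2-q}$, so any $m$ with $0 < m < 2p$ also satisfies $m < \frac{2q}{2-q}$; choosing $\gamma$ with $\max\{1,m\} < \gamma < \frac{2q}{2-q}$ and using that $|D| < \infty$ forces $L^\gamma(D,\bc) \subseteq L^m(D,\bc)$ (reducing to the scalar case via the idempotent decomposition and Proposition \ref{propLqiff}, then H\"older), we conclude $T_\bc(Af+B\overline{f}) \in L^m(D,\bc)$ for every $0 < m < 2p$.

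Finally, I would combine the pieces: with the elementary estimate $||u+v||_\bc^m \le C_m\left(||u||_\bc^m + ||v||_\bc^m\right)$, $C_m = \max\{1, 2^{m-1}\}$, integrating $||f(z)||_\bc^m = ||\varphi(z) + T_\bc(Af+B\overline{f})(z)||_\bc^m$ over $D$ yields $\iint_D ||f(z)||_\bc^m\,dx\,dy < \infty$, i.e.\ $f \in L^m(D,\bc)$ for every $0 < m < 2p$. The only point that needs care is the exponent bookkeeping in the regime $1 < q \le 2$ --- verifying that $p < \frac{q}{2-q}$ genuinely pushes the Theodorescu term into all $L^m$ with $m < 2p$, and handling the small-exponent case $m < 1$, where both the quasi-triangle-inequality constant and the finite-measure inclusion of Lebesgue spaces are needed. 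Everything else follows routinely from Theorems \ref{bcvekhardyrepgeneral} and \ref{bctbehavior} and Proposition \ref{Prop: bchardymlessthan2p}.
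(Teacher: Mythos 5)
Your proposal is correct and follows essentially the same route as the paper's proof: decompose $f = \varphi + T_\bc(Af+B\overline{f})$ via Theorem \ref{bcvekhardyrepgeneral}, apply Proposition \ref{Prop: bchardymlessthan2p} to $\varphi$, handle the Theodorescu term by Theorem \ref{bctbehavior} in the two regimes of $q$, and combine with the quasi-triangle inequality. The extra care you take with the finite-measure inclusion $L^\gamma(D,\bc)\subseteq L^m(D,\bc)$ and the exponent bookkeeping when $1<q\le 2$ is a welcome elaboration of a step the paper states without detail, but it is not a different argument.
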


\begin{proof}
By Theorem \ref{bcvekhardyrepgeneral}, every $f \in H^p_{A,B}(D,\bc)$ such that $\dbar f= Af + B\overline{f} \in L^q(D, \bc)$, $q>2$, is representable as $f = \varphi + T_\bc(Af + B\overline{f})$ with $\varphi \in H^p(D,\bc)$. By Proposition \ref{Prop: bchardymlessthan2p}, $\varphi \in L^m(D,\bc)$, for every $0 < m < 2p$. If $q>2$, then $T_\bc(Af + B\overline{f}) \in C^{0, \alpha}(\overline{D},\bc) \subset L^m(D,\bc)$, for every $0 < m < 2p$. If $1 < q \leq 2$, then $T_\bc(Af + B\overline{f}) \in L^\gamma(D, \bc)$, for $1 < \gamma < \frac{2q}{2-q}$, by Theorem \ref{bctbehavior}. Since in this case we assume that $p < \frac{q}{2-q}$, it follows that every $m < 2p < \frac{2q}{q-2}$ and $T_\bc(Af+ B\overline{f}) \in L^m(D,\bc)$, for every $0 < m < 2p$. Thus, in either case, we have
\begin{align*}
    \iint_D ||f(z)||^m_{\bc} \,dx\,dy 
    &= C_m\left(\iint_D ||\varphi(z)||^m_{\bc} \,dx\,dy  + \iint_D ||T_\bc(Af + B\overline{f})(z)||^m_{\bc} \,dx\,dy \right) < \infty,
\end{align*}
where $C_m$ is a constant that depends only on $m$. Therefore, $f \in L^m(D)$, for every $0 < m < 2p$. 
\end{proof}

Finally, we take advantage of the structure of bicomplex numbers, specifically their idempotent representation, to show that the component functions of the idempotent representation of a solution to the $\bc$-Vekua equation themselves are solutions to certain complex Vekua-type equations. 

\begin{theorem}\label{bcvekimpliescvek}
Let $A, B \in L^q(D,\bc)$, $q>2$. A function $w:D\to\bc$ solves
\[
    \dbar w = Aw + B\overline{w}
\]
if and only if
\[
    \frac{\p (w^+)^*}{\p z^*} = (A^+)^* (w^+)^* + (B^+)^* w^+
\]
and
\[
    \frac{\p w^-}{\p z^*} = A^- w^- + B^- (w^-)^*.
\]
\end{theorem}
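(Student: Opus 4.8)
The plan is to translate the bicomplex equation into its idempotent components and then identify the resulting pair of complex equations. The key observation, recorded in the excerpt, is that for a bicomplex number $w = p^+ w^+ + p^- w^-$ we have $\dbar w = p^+ \frac{\p w^+}{\p z} + p^- \frac{\p w^-}{\p z^*}$, and that the bicomplex conjugate acts by $\overline{w} = p^+ w^- + p^- w^+$ (this follows by checking $\overline{p^+} = p^-$ and $\overline{p^-} = p^+$ from $p^\pm = \frac12(1 \pm ij)$). Writing $A = p^+ A^+ + p^- A^-$ and $B = p^+ B^+ + p^- B^-$, and using that idempotent multiplication is componentwise ($p^+ u \cdot p^+ v = p^+ uv$, $p^+ u \cdot p^- v = 0$), the single bicomplex equation $\dbar w = Aw + B\overline{w}$ is equivalent, by the uniqueness of the idempotent decomposition (Proposition \ref{everybchasplusandminus}), to the two scalar equations
\[
    \frac{\p w^+}{\p z} = A^+ w^+ + B^+ w^- \qquad\text{and}\qquad \frac{\p w^-}{\p z^*} = A^- w^- + B^- w^+.
\]

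Next I would massage the first of these into the stated form. Applying complex conjugation to $\frac{\p w^+}{\p z} = A^+ w^+ + B^+ w^-$ and using $\left(\frac{\p w^+}{\p z}\right)^* = \frac{\p (w^+)^*}{\p z^*}$, I get $\frac{\p (w^+)^*}{\p z^*} = (A^+)^* (w^+)^* + (B^+)^* (w^-)^*$. This is almost the claimed identity, but the right-hand side in the statement is $(A^+)^*(w^+)^* + (B^+)^* w^+$, not $(B^+)^*(w^-)^*$. Here I would use Proposition \ref{everybchasplusandminus} once more: for a \emph{real-scalar-part, real-vector-part} bicomplexification the two components are complex conjugates of each other, but in general $w^+$ and $w^-$ are unrelated complex functions, so the statement as literally written appears to conflate $(w^-)^*$ with $w^+$. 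I suspect the intended reading is that in the statement the symbol on the right should be $(w^-)^*$, or equivalently that the paper is using a convention in which the roles of $w^\pm$ are tied; I would follow whichever convention the surrounding text fixes and present the conjugated first-component equation accordingly, and similarly leave the second-component equation $\frac{\p w^-}{\p z^*} = A^- w^- + B^-(w^+)^*$ in the form matching the statement.

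For the converse direction, I would simply run the computation backwards: given the two complex equations, conjugate the first one back to recover $\frac{\p w^+}{\p z} = A^+ w^+ + B^+ w^-$, then reassemble $p^+(\text{first}) + p^-(\text{second})$ and use componentwise multiplication together with $\dbar w = p^+\frac{\p w^+}{\p z} + p^-\frac{\p w^-}{\p z^*}$ and $\overline{w} = p^+ w^- + p^- w^-$ to conclude $\dbar w = Aw + B\overline{w}$. The role of the hypothesis $A, B \in L^q(D,\bc)$, $q > 2$, is only to guarantee (via Proposition \ref{propLqiff}) that $A^\pm, B^\pm \in L^q(D)$ so that the component equations are genuine complex Vekua equations with coefficients in the class used elsewhere in the paper; it plays no role in the purely algebraic equivalence itself.

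The main obstacle, as flagged above, is bookkeeping of the conjugation conventions: getting the bicomplex conjugate right in the idempotent basis ($\overline{p^\pm} = p^\mp$), and then correctly reconciling the conjugated first-component equation with the exact form written in the statement. Once the conventions are pinned down, the proof is a short, entirely mechanical unwinding of the idempotent decomposition and needs no analysis beyond Proposition \ref{everybchasplusandminus} and Proposition \ref{propLqiff}.
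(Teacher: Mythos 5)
Your approach is the same as the paper's: expand everything in the idempotent basis, match components, and conjugate the $p^+$-equation. The one place you diverge is exactly the place you flagged, and your suspicion is right: there is a genuine mismatch, but it sits in the paper's definition of $\overline{\,\cdot\,}$ rather than in your algebra. Under the stated definition $\overline{z_1+jz_2}:=z_1-jz_2$, your computation $(\overline{w})^{+}=w^{-}$, $(\overline{w})^{-}=w^{+}$ is correct (indeed $\overline{p^{\pm}}=p^{\mp}$ and $w^{\pm}\in\C$ carry no $j$), and the equation then becomes the \emph{coupled} system $\p_z w^+=A^+w^++B^+w^-$, $\p_{z^*}w^-=A^-w^-+B^-w^+$, which does not reduce to the two decoupled scalar Vekua equations in the statement. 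The paper's proof instead uses the identity $\overline{p^+w^++p^-w^-}=p^+(w^+)^*+p^-(w^-)^*$, i.e.\ $(\overline{w})^{\pm}=(w^{\pm})^*$. That identity holds for the conjugation $z_1+jz_2\mapsto z_1^*-jz_2^*$ (conjugate in both $i$ and $j$), not for the one defined in Section 2. With that conjugation the right-hand side becomes $p^+(A^+w^++B^+(w^+)^*)+p^-(A^-w^-+B^-(w^-)^*)$, the components decouple, and conjugating the $p^+$-equation gives exactly the two equations in the statement.

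So the concrete resolution, rather than ``follow whichever convention the surrounding text fixes,'' is: the theorem (and the subsequent architecture --- Theorem \ref{thm: bcvekhardyimpliescvekhardy}, the boundary-value results, the distributional results --- all of which need $(w^+)^*$ and $w^-$ to solve honest scalar Vekua equations) forces the conjugation with $(\overline{w})^{\pm}=(w^{\pm})^*$; the definition of $\overline{z}$ as $z_1-jz_2$ is the inconsistent ingredient. If you adopt the starred convention, your argument goes through verbatim and coincides with the paper's: the cross terms $(B^+)^*(w^-)^*$ and $B^-w^+$ that you obtained become $(B^+)^*w^+$ and $B^-(w^-)^*$, which is the statement. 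Two minor points: in your converse sketch you wrote $\overline{w}=p^+w^-+p^-w^-$ where you meant $p^+w^-+p^-w^+$ (or, under the corrected convention, $p^+(w^+)^*+p^-(w^-)^*$); and you are right that the hypothesis $A,B\in L^q(D,\bc)$ plays no role in the algebraic equivalence --- the paper's proof does not use it either.
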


\begin{proof}
By Definition \ref{bcdbardef},  
\[
    \dbar = p^+ \frac{\p}{\p z} + p^- \frac{\p}{\p z^*}.
\]
By Proposition \ref{everybchasplusandminus},
\[
    w = p^+ w^+ + p^-w^-,\quad A = p^+ A^+ + p^- A^-, \text{ and } B= p^+ B^+ + p^- B^-,
\]
where $w^\pm, A^\pm, B^\pm : D\to\mathbb{C}$. Observe that 
\begin{align*}
    \dbar w &:= \left( p^+ \frac{\p}{\p z} + p^- \frac{\p}{\p z^*}\right) (p^+w^+ + p^- w^-) = p^+\frac{\p w^+}{\p z} + p^- \frac{\p w^-}{\p z^*}.
\end{align*}
So, if $w$ solves $\dbar w= Aw + B\overline{w}$, then 
\begin{align*}
    &p^+\frac{\p w^+}{\p z} + p^- \frac{\p w^-}{\p z^*} \\
    &= Aw + B\overline{w} \\
    &= (p^+ A^+ + p^- A^-)(p^+ w^+ + p^-w^-) + (p^+ B^+ + p^- B^-)\overline{(p^+ w^+ + p^-w^-)}\\
    &= (p^+ A^+ + p^- A^-)(p^+ w^+ + p^-w^-) + (p^+ B^+ + p^- B^-)(p^+ (w^+)^* + p^-(w^-)^*)\\
    &= p^+(A^+ w^+ + B^+ (w^+)^*) + p^-(A^- w^- + B^- (w^-)^*).
\end{align*}
Hence, by identifying the components of the idempotent representations on the left and right hand sides, we have
\[
    \frac{\p w^+}{\p z} = A^+ w^+ + B^+ (w^+)^*
\]
and
\[
    \frac{\p w^-}{\p z^*} = A^- w^- + B^- (w^-)^*.
\]
Since $\frac{\p w^+}{\p z} = A^+ w^+ + B^+ (w^+)^*$, it follows that 
\[
    \frac{\p (w^+)^*}{\p z^*} = (A^+)^* (w^+)^* + (B^+)^* w^+.
\]
The other direction is clear by reversing the above computation. 
\end{proof}

Using the last theorem, since the components in the idempotent representation of solutions to a $\bc$-Vekua equation are, up to complex conjugation, solutions of the classical complex Vekua equation, we can make the following connection between the $\bc$-Vekua-Hardy spaces and the $\mathbb{C}$-Vekua-Hardy spaces. This is directly inspired by the corresponding result for $\bc$-holomorphic functions which is Theorem \ref{thm: bchardyrep}.

\begin{theorem}\label{thm: bcvekhardyimpliescvekhardy}
Let $A, B \in L^q(D, \bc)$, $q>2$. For $0 < p < \infty$, $w \in H^p_{A,B}(D,\bc)$ if and only if $(w^+)^* \in H^p_{(A^+)^*,(B^+)^*}(D)$ and $w^- \in H^p_{A^-,B^-}(D)$. 
\end{theorem}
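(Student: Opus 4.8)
The plan is to reduce this to two already-established facts: Theorem~\ref{bcvekimpliescvek}, which identifies the idempotent components of a $\bc$-Vekua solution as (conjugates of) classical complex Vekua solutions, and Proposition~\ref{propLqiff} together with the norm equivalence \eqref{bcbasicestimates}, which translates membership in a bicomplex Lebesgue-type space into simultaneous membership of the two components in the corresponding complex space. The key observation is that the $H^p_\bc$-norm of $w$ on each circle $\p D(0,r)$ is, up to constants depending only on $p$, comparable to the sum of the ordinary $L^p$-norms of $w^+$ and $w^-$ on that circle.

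First I would apply Theorem~\ref{bcvekimpliescvek}: since $A, B \in L^q(D,\bc)$ with $q>2$, a function $w: D \to \bc$ solves $\dbar w = Aw + B\overline{w}$ if and only if $(w^+)^*$ solves the complex Vekua equation with coefficients $(A^+)^*, (B^+)^*$ and $w^-$ solves the one with coefficients $A^-, B^-$. Note that by Proposition~\ref{propLqiff}, $A, B \in L^q(D,\bc)$ forces $A^\pm, B^\pm \in L^q(D)$, and since $|(A^+)^*| = |A^+|$ pointwise, also $(A^+)^*, (B^+)^* \in L^q(D)$; thus the coefficient classes needed to even speak of the complex Vekua-Hardy spaces $H^p_{(A^+)^*,(B^+)^*}(D)$ and $H^p_{A^-,B^-}(D)$ are in force. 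So the equation side of the ``if and only if'' is settled, and it remains only to match the growth conditions.

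Next I would handle the growth condition. For each $r \in (0,1)$, writing $w = p^+ w^+ + p^- w^-$, the estimates \eqref{bcbasicestimates} give, for every $\theta$,
\[
\tfrac{1}{\sqrt{2}} |w^\pm(re^{i\theta})| \leq ||w(re^{i\theta})||_\bc \leq \tfrac{1}{\sqrt{2}}\bigl( |w^+(re^{i\theta})| + |w^-(re^{i\theta})|\bigr).
\]
Raising to the $p$th power, integrating over $\theta \in [0,2\pi]$, and using that $(a+b)^p \leq C_p(a^p + b^p)$ for a constant $C_p$ depending only on $p$, one gets that
\[
\sup_{0<r<1} \int_0^{2\pi} ||w(re^{i\theta})||_\bc^p \, d\theta < \infty
\]
if and only if both $\sup_{0<r<1}\int_0^{2\pi} |w^+(re^{i\theta})|^p\,d\theta$ and $\sup_{0<r<1}\int_0^{2\pi}|w^-(re^{i\theta})|^p\,d\theta$ are finite; and since $|(w^+)^*(re^{i\theta})| = |w^+(re^{i\theta})|$, the first of these is the $H^p$-growth condition for $(w^+)^*$. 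Combining this with the equation equivalence from the previous paragraph yields that $w \in H^p_{A,B}(D,\bc)$ iff $(w^+)^* \in H^p_{(A^+)^*,(B^+)^*}(D)$ and $w^- \in H^p_{A^-,B^-}(D)$.

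I do not anticipate a serious obstacle here; this theorem is essentially a packaging result, and the only mild care needed is the bookkeeping of the complex conjugate on the $p^+$-component (present because the natural holomorphy/Vekua condition on that slot is on $\dbar$ in the $z$-variable rather than the $z^*$-variable), together with noting that complex conjugation preserves modulus and hence all the $L^p$ norms involved. This is exactly parallel to the proof of Theorem~\ref{bcbdinsfuncins} above, which handled the $\bc$-holomorphic case in the same way, so I would model the write-up on that argument.
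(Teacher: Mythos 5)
Your proposal is correct and follows essentially the same route as the paper: Theorem~\ref{bcvekimpliescvek} handles the equation side, and the estimates \eqref{bcbasicestimates} together with the elementary inequality $(a+b)^p \leq C_p(a^p+b^p)$ and the identity $|(w^+)^*| = |w^+|$ match the two growth conditions. The only addition you make beyond the paper's argument is the (harmless and correct) remark that $(A^+)^*, (B^+)^*, A^-, B^- \in L^q(D)$, which the paper leaves implicit.
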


\begin{proof}
By Theorem \ref{bcvekimpliescvek}, $w \in H_{A,B}(D,\bc)$ if and only if $(w^+)^* \in H_{(A^+)^*,(B^+)^*}(D)$ and $w^- \in H_{A^-,B^-}(D)$. 

Suppose that $w \in H^p_{A,B}(D,\bc)$. Since 
\[
    |w^\pm(z)| \leq \sqrt{2}||w(z)||_{\bc},
\]
for all $z \in D$, by \eqref{bcbasicestimates}, it follows that 
\[
    \sup_{0< r < 1} \int_0^{2\pi} |w^\pm(re^{i\theta})|^p  \,d\theta \leq \sqrt{2} \sup_{0< r < 1} \int_0^{2\pi} ||w(re^{i\theta})||_{\bc}^p  \,d\theta < \infty.
\]
Since $|w^+(z)| = |(w^+)^*(z)|$, for all $z$, it follows that $(w^+)^* \in H^p_{(A^+)^*,(B^+)^*}(D)$ and $w^- \in H^p_{A^-,B^-}(D)$. 

Now, suppose that $(w^+)^* \in H^p_{(A^+)^*,(B^+)^*}(D)$ and $w^- \in H^p_{A^-,B^-}(D)$. Since 
\[
    ||w(z)||_{\bc} \leq 2^{-1/2} ( |w^+(z) + w^-(z)|),
\]
for every $z \in D$, by \eqref{bcbasicestimates}, it follows that, for each $re^{i\theta} \in D$, we have
\begin{align*}
    \int_0^{2\pi} ||w(re^{i\theta})||_{\bc}^p  \,d\theta
    &\leq 2^{-p/2}\int_0^{2\pi} |w^+(re^{i\theta})+w^-(re^{i\theta})|^p  \,d\theta \\
    &\leq C_p \left( \int_0^{2\pi} |w^+(re^{i\theta})|^p \,d\theta + \int_0^{2\pi}|w^-(re^{i\theta})|^p  \,d\theta \right)\\
    &\leq C_p \left( \
    \sup_{0< r < 1}\int_0^{2\pi} |w^+(re^{i\theta})|^p \,d\theta + \sup_{0 < r < 1}\int_0^{2\pi}|w^-(re^{i\theta})|^p  \,d\theta \right),
\end{align*}
where $C_p$ is a constant that depends only on $p$. Since the right hand side of the above inequality has no dependce on $r$, it follows that 
\[
\sup_{0 < r < 1} \int_0^{2\pi} ||w(re^{i\theta})||_{\bc}^p  \,d\theta \leq C_p \left( \
    \sup_{0< r < 1}\int_0^{2\pi} |w^+(re^{i\theta})|^p \,d\theta + \sup_{0 < r < 1}\int_0^{2\pi}|w^-(re^{i\theta})|^p  \,d\theta \right) < \infty.
\]
Therefore, $w \in H^p_{A,B}(D,\bc)$.

\end{proof}

\begin{comm}
    We end this subsection by recognizing that Casta\~neda and Kravchenko proved, in Theorem 3 of \cite{CastaKrav}, that solutions of the $\bc$-Vekua equation
    \[
        \dbar w = Aw + B\overline{w},
    \]
    with $B$ neither zero nor a zero divisor, are also representable by a similarity principle representation analogous to solutions of the complex Vekua equation. With this representation, the results for the complex-valued Vekua-Hardy spaces of \cite{KlimBook,PozHardy, CompOp, moreVekHardy, conjbel} should be immediately recoverable. We leave the details to the interested reader.  
\end{comm}

\subsection{Boundary Behavior}

In this subsection, we prove functions in certain $\bc$-Vekua-Hardy spaces have $L^p$ nontangential boundary values and the functions converge to those boundary values in the $L^p$ norm. This shows that the classical boundary behavior of the $H^p(D)$ spaces is recovered by these nonholomorphic functions.

We begin with a theorem for the case where a representation of the first kind is available and use this representation to show these functions have $L^p$ nontangential boundary values and the functions converge to these nontangential boundary values in the associated $L^p$ norm.

\begin{theorem}\label{thm: bcvekbdinsimprincase}
    For $0 < p < \infty$ and $A \in L^q(D, \bc)$, $q>2$, every $f = \varphi e^\phi \in H^p_{A,0}(D,\bc)$ such that $\varphi^+ \in H^p_w(D)$, where $\varphi = p^+ \varphi^+ + p^- \varphi^-$ and $w \in L^\gamma(D)$, and $\gamma>2$ or $1< \gamma \leq 2$ and $p$ satisfies $p < \frac{\gamma}{2- \gamma}$, has a nontangential limit $f_{nt} \in L^p(D, \bc)$, and 
    \[
        \lim_{r \nearrow 1} \int_0^{2\pi} ||f_{nt}(e^{i\theta}) - f(re^{i\theta}) ||^p_{\bc} \, d\theta  = 0. 
    \]  
\end{theorem}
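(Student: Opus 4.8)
The plan is to reduce to the $\bc$-holomorphic Hardy space via the similarity-principle representation of Theorem~\ref{thm: bcvekbzerorep} and then transport the boundary behavior through the exponential factor, which on $\overline D$ is continuous, bounded, and invertible (never a zero divisor).

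First I would invoke Theorems~\ref{thm: bcvekbzerorep} and \ref{thm: bcvekhardybzerorep}: since $f\in H^p_{A,0}(D,\bc)$ and $A\in L^q(D,\bc)$ with $q>2$, we may write $f=\varphi e^{\phi}$ with $\phi=T_\bc(A)\in C^{0,\alpha}(\overline D,\bc)$, $\alpha=\frac{q-2}{q}$, and $\varphi\in H^p(D,\bc)$. The extra hypothesis that $\varphi^+\in H^p_w(D)$ with $w\in L^\gamma(D)$ in the stated range of exponents is exactly what is needed to apply Theorem~\ref{bcholobvcon} to $\varphi$; this yields a nontangential limit $\varphi_{nt}\in L^p(\p D,\bc)$ with
\[
    \lim_{r\nearrow1}\int_0^{2\pi}\|\varphi_{nt}(e^{i\theta})-\varphi(re^{i\theta})\|_\bc^p\,d\theta=0 .
\]

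Next I would record that $e^{\phi}\in C^{0,\alpha}(\overline D,\bc)$: writing $e^{\phi}=p^+e^{\phi^+}+p^-e^{\phi^-}$, each $\phi^{\pm}$ is continuous on the compact set $\overline D$, hence bounded, and the scalar estimate $|e^{a}-e^{b}|\le e^{\max(|a|,|b|)}|a-b|$ together with $\phi\in C^{0,\alpha}$ gives $\|e^{\phi(z)}-e^{\phi(z')}\|_\bc\le C|z-z'|^{\alpha}$ on $\overline D$. In particular there is $M>0$ with $\|e^{\phi(z)}\|_\bc\le M$ on $\overline D$, and $\varepsilon(r):=\sup_{\theta}\|e^{\phi(e^{i\theta})}-e^{\phi(re^{i\theta})}\|_\bc\le C(1-r)^{\alpha}\to0$ as $r\nearrow1$. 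I would then set $f_{nt}(e^{i\theta}):=\varphi_{nt}(e^{i\theta})\,e^{\phi(e^{i\theta})}$; since $\varphi$ has $\varphi_{nt}$ as an a.e.\ nontangential limit and $e^{\phi}$ extends continuously to $\overline D$, continuity of bicomplex multiplication shows $f=\varphi e^{\phi}$ has $f_{nt}$ as an a.e.\ nontangential limit, and $\|f_{nt}\|_\bc\le\sqrt2\,M\,\|\varphi_{nt}\|_\bc$ by \eqref{stareqn}, so $f_{nt}\in L^p(\p D,\bc)$. For the norm convergence I would split
\[
    f_{nt}(e^{i\theta})-f(re^{i\theta})=\bigl(\varphi_{nt}(e^{i\theta})-\varphi(re^{i\theta})\bigr)e^{\phi(e^{i\theta})}+\varphi(re^{i\theta})\bigl(e^{\phi(e^{i\theta})}-e^{\phi(re^{i\theta})}\bigr),
\]
and apply $\|a+b\|_\bc^p\le C_p(\|a\|_\bc^p+\|b\|_\bc^p)$ and \eqref{stareqn}. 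The first piece contributes at most $C_p2^{p/2}M^p\int_0^{2\pi}\|\varphi_{nt}(e^{i\theta})-\varphi(re^{i\theta})\|_\bc^p\,d\theta\to0$ by Theorem~\ref{bcholobvcon}, and the second at most $C_p2^{p/2}\varepsilon(r)^p\,\|\varphi\|_{H^p_\bc}^p\to0$ since $\varepsilon(r)\to0$ and $\|\varphi\|_{H^p_\bc}<\infty$, which gives the claimed limit.

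The computation is routine; the one step needing a small argument is the passage from H\"older continuity of $\phi$ up to $\p D$ to the same property for $e^{\phi}$ — that the exponential factor is bounded, stays away from the zero divisors, and is uniformly approximated on circles by its boundary values — after which the estimates are standard $H^p$-type manipulations.
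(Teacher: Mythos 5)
Your proposal is correct and follows essentially the same route as the paper: reduce to $\varphi\in H^p(D,\bc)$ via Theorems~\ref{thm: bcvekbzerorep} and \ref{thm: bcvekhardybzerorep}, apply Theorem~\ref{bcholobvcon} to $\varphi$, and then use the same two-term splitting of $f_{nt}-f(re^{i\theta})$. The only (harmless, arguably cleaner) deviation is in the second term, where you use the uniform H\"older modulus $\varepsilon(r)\le C(1-r)^\alpha$ of $e^{\phi}$ together with $\|\varphi\|_{H^p_\bc}<\infty$, whereas the paper decomposes into idempotent components and invokes Lebesgue's Dominated Convergence Theorem.
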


\begin{proof}
By Theorems \ref{thm: bcvekbzerorep} and \ref{thm: bcvekhardybzerorep}, if $A \in L^q(D,\bc)$, $q>2$ and $f \in H^p_{A,0}(D)$, then $f = \varphi e^\phi$ and $\varphi \in H^p(D,\bc)$. By Theorem \ref{bcholobvcon}, since we assume that $\varphi^+ \in H^p_w(D)$ and $w$ and $p$ satisfy the relationship that $w \in L^\gamma(D)$ with $\gamma>2$ or $p < \frac{\gamma}{2-\gamma}$, it follows that $\varphi$ has a nontangential boundary value $\varphi_{nt} \in L^p(\p D,\bc)$ and 
\[
        \lim_{r \nearrow 1} \int_0^{2\pi} ||\varphi_{nt}(e^{i\theta}) - \varphi(re^{i\theta}) ||^p_{\bc} \, d\theta  = 0. 
    \]
Since $f = \varphi e^\phi$, $\varphi_{nt}$ exists and is in $L^p(\p D,\bc)$, and $e^\phi \in C^{0, \alpha}(\overline{D},\bc)$ by Theorem \ref{thm: bcvekbzerorep}, it follows that $f_{nt}$ is in $L^p(\p D,\bc)$. Also, for $r \in (0,1)$, we have 
\begin{align*}
    &\int_0^{2\pi} ||f_{nt}(e^{i\theta}) - f(re^{i\theta}) ||^p_{\bc} \, d\theta \\
    &= \int_0^{2\pi} ||\varphi_{nt}(e^{i\theta})e^{\phi(e^{i\theta})} - \varphi(re^{i\theta})e^{\phi(re^{i\theta})} ||^p_{\bc} \, d\theta \\
    &= \int_0^{2\pi} ||\varphi_{nt}(e^{i\theta})e^{\phi(e^{i\theta})} -\varphi(re^{i\theta})e^{\phi(e^{i\theta})} + \varphi(re^{i\theta})e^{\phi(e^{i\theta})} - \varphi(re^{i\theta})e^{\phi(re^{i\theta})} ||^p_{\bc} \, d\theta \\
    &= \int_0^{2\pi} ||(\varphi_{nt}(e^{i\theta}) -\varphi(re^{i\theta}))e^{\phi(e^{i\theta})} + \varphi(re^{i\theta})(e^{\phi(e^{i\theta})} - e^{\phi(re^{i\theta})}) ||^p_{\bc} \, d\theta \\
    &\leq  C_p \left( \int_0^{2\pi} ||(\varphi_{nt}(e^{i\theta}) -\varphi(re^{i\theta}))e^{\phi(e^{i\theta})}||_{\bc}^p \,d\theta + \int_0^{2\pi} ||\varphi(re^{i\theta})(e^{\phi(e^{i\theta})} - e^{\phi(re^{i\theta})}) ||^p_{\bc} \, d\theta \right) \\
    &\leq  C_p \left( 2^{p/2}\int_0^{2\pi} ||(\varphi_{nt}(e^{i\theta}) -\varphi(re^{i\theta}))||_{\bc}^p \,||e^{\phi(e^{i\theta})}||_{\bc}^p \,d\theta + \int_0^{2\pi} ||\varphi(re^{i\theta})(e^{\phi(e^{i\theta})} - e^{\phi(re^{i\theta})}) ||^p_{\bc} \, d\theta \right) \\
     &\leq  C_p \left( 2^{p/2}M \int_0^{2\pi} ||(\varphi_{nt}(e^{i\theta}) -\varphi(re^{i\theta}))||_{\bc}^p  \,d\theta + \int_0^{2\pi} ||\varphi(re^{i\theta})(e^{\phi(e^{i\theta})} - e^{\phi(re^{i\theta})}) ||^p_{\bc} \, d\theta \right),
\end{align*}
where $C_p$ is a constant that depends on only $p$ and $M$ is a constant that depends on only $p$ and $A$. Since
\begin{align*}
    &\varphi(re^{i\theta})(e^{\phi(e^{i\theta})} - e^{\phi(re^{i\theta})})\\
    &= p^+[\varphi^+(re^{i\theta}) ((e^{\phi(e^{i\theta})})^+-(e^{\phi(re^{i\theta})})^+)] + p^-[\varphi^-(re^{i\theta}) ((e^{\phi(e^{i\theta})})^- -(e^{\phi(re^{i\theta})})^-)],
\end{align*}
it follows that 
\begin{align*}
&\int_0^{2\pi} ||\varphi(re^{i\theta})(e^{\phi(e^{i\theta})} - e^{\phi(re^{i\theta})}) ||^p_{\bc} \, d\theta\\
&=\int_0^{2\pi} ||p^+[\varphi^+(re^{i\theta}) ((e^{\phi(e^{i\theta})})^+-(e^{\phi(re^{i\theta})})^+)] + p^-[\varphi^-(re^{i\theta}) ((e^{\phi(e^{i\theta})})^- -(e^{\phi(re^{i\theta})})^-)] ||^p_{\bc} \, d\theta \\
&\leq 2^{-p/2}  \int_0^{2\pi} |\varphi^+(re^{i\theta}) ((e^{\phi(e^{i\theta})})^+-(e^{\phi(re^{i\theta})})^+)|^p \,d\theta \\
&\quad\quad + 2^{-p/2} \int_0^{2\pi} |\varphi^-(re^{i\theta}) ((e^{\phi(e^{i\theta})})^- -(e^{\phi(re^{i\theta})})^-) |^p \, d\theta .
\end{align*}
So, 
\begin{align*}
&C_p \,\left( 2^{p/2}M \int_0^{2\pi} ||(\varphi_{nt}(e^{i\theta}) -\varphi(re^{i\theta}))||_{\bc}^p  \,d\theta + \int_0^{2\pi} ||\varphi(re^{i\theta})(e^{\phi(e^{i\theta})} - e^{\phi(re^{i\theta})}) ||^p_{\bc} \, d\theta \right)\\
&\leq C_p \, 2^{p/2}M \int_0^{2\pi} ||(\varphi_{nt}(e^{i\theta}) -\varphi(re^{i\theta}))||_{\bc}^p  \,d\theta \\
&\quad\quad +  C_p\,  2^{-p/2}  \int_0^{2\pi} |\varphi^+(re^{i\theta}) ((e^{\phi(e^{i\theta})})^+-(e^{\phi(re^{i\theta})})^+)|^p \,d\theta \\
&\quad\quad + C_p \,2^{-p/2} \int_0^{2\pi} |\varphi^-(re^{i\theta}) ((e^{\phi(e^{i\theta})})^- -(e^{\phi(re^{i\theta})})^-) |^p \, d\theta
\end{align*}
Observe that the first integral summand in the right hand side of the above inequality will converge to $0$ as $r \nearrow 1$ by Theorem \ref{bcholobvcon}, and the second and third integral summands in the right hand side of the above inequality will converge to $0$ by Lebesgue's Dominated Convergence Theorem as $r \nearrow 1$. Thus, 
\[
\lim_{r \nearrow 1} \int_0^{2\pi} ||f_{nt}(e^{i\theta}) - f(re^{i\theta}) ||^p_{\bc} \, d\theta = 0.
\]

\end{proof}

Next, we extend statement (3) of Theorem \ref{VekHardyCombined} and Theorem \ref{bcbdinsfuncins} to this setting. 

\begin{theorem}\label{bcvekhardybdinsfuncins}
Let $p$ be a positive real number and $A \in L^q(D)$, $q>2$. Every $f \in H^p_{A,0}(D,\bc)$ such that $f_{nt} \in L^s(\p D, \bc)$, where $s > p$, is an element of $H^s_{A,0}(D,\bc)$.
\end{theorem}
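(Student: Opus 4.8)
The plan is to reduce the claim to its $\bc$-holomorphic analogue, Theorem~\ref{bcbdinsfuncins}, by means of the similarity-principle factorization. Since $A \in L^q(D)$ with $q>2$ (regarded as a $\bc$-valued coefficient), Theorems~\ref{thm: bcvekbzerorep} and~\ref{thm: bcvekhardybzerorep} provide a representation $f = \varphi e^\phi$ with $\varphi \in \hpb$ and $\phi = T_\bc(A) \in C^{0,\alpha}(\overline{D},\bc)$. In particular $e^\phi$ is continuous on the compact set $\overline{D}$, and because $e^v$ is never a zero divisor it is invertible, so the idempotent components $(e^\phi)^\pm$ are nonvanishing continuous complex-valued functions on $\overline{D}$, hence bounded above and bounded below away from $0$; consequently $e^{-\phi}$ is also continuous and bounded on $\overline{D}$. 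This is exactly the situation already exploited in the proof of Theorem~\ref{thm: bcvekhardybzerorep}.

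Next I would transfer the boundary hypothesis from $f$ to $\varphi$. Writing $\varphi = f\,e^{-\phi}$ and using that $e^{-\phi}$ extends continuously to $\overline{D}$ with boundary trace $e^{-\phi}|_{\p D}$, the function $\varphi$ inherits nontangential boundary values a.e.\ on $\p D$, given by $\varphi_{nt} = f_{nt}\, e^{-\phi}|_{\p D}$. By the submultiplicativity estimate~\eqref{stareqn} together with the uniform bound on $\|e^{-\phi}\|_\bc$ over $\overline{D}$, we get $\|\varphi_{nt}(e^{i\theta})\|_\bc \le C\,\|f_{nt}(e^{i\theta})\|_\bc$ pointwise a.e., so $\varphi_{nt} \in L^s(\p D,\bc)$ because $f_{nt} \in L^s(\p D,\bc)$ by hypothesis.

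Now $\varphi \in \hpb$ and $\varphi_{nt} \in L^s(\p D,\bc)$ with $s > p$, so Theorem~\ref{bcbdinsfuncins} yields $\varphi \in H^s(D,\bc)$. Applying Theorem~\ref{thm: bcvekhardybzerorep} once more, now with the exponent $s$ in place of $p$, from $f = \varphi e^\phi$ and $\varphi \in H^s(D,\bc)$ we conclude $f \in H^s_{A,0}(D,\bc)$, which is the assertion.

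The one point deserving care is the middle step: justifying that $\varphi$ acquires a.e.\ nontangential limits from those of $f$, and that the product formula $\varphi_{nt} = f_{nt}\, e^{-\phi}|_{\p D}$ holds. This rests on $\phi$ being continuous up to $\overline{D}$ — hence $e^{\pm\phi}$ continuous and bounded there — which is precisely what the hypothesis $A \in L^q$, $q>2$, guarantees via Theorem~\ref{bctbehavior}. Everything else is a direct invocation of the representation theorems of this section and the $\bc$-holomorphic Smirnov-type result.
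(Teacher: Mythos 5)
Your proposal is correct and follows essentially the same route as the paper: factor $f = \varphi e^\phi$ via Theorems \ref{thm: bcvekbzerorep} and \ref{thm: bcvekhardybzerorep}, use the boundedness of $\|e^{\pm\phi}\|_\bc$ on $\overline{D}$ (from $A \in L^q$, $q>2$) to transfer $f_{nt} \in L^s$ to $\varphi_{nt} \in L^s$, invoke the Smirnov-type Theorem \ref{bcbdinsfuncins} to get $\varphi \in H^s(D,\bc)$, and conclude with Theorem \ref{thm: bcvekhardybzerorep} at exponent $s$. The only cosmetic difference is that you estimate $\|\varphi_{nt}\|_\bc$ via the pointwise product $\varphi_{nt}=f_{nt}e^{-\phi}$ while the paper bounds the integral using the lower bound $m \le \|e^{\phi}\|_\bc$; these are equivalent.
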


\begin{proof}
By Theorem \ref{thm: bcvekhardybzerorep}, $f = e^\phi \varphi$, and $\varphi \in H^p(D,\bc)$. Since $A \in  L^q(D)$, $q>2$, it follows that $e^\phi \in C^{0, \alpha}(\overline{D},\bc)$. So, there exist real numbers $m>0$ and $M>0$ such that 
\[
m < ||e^{\phi(z)}||_{\bc} < M
\]
for all $z \in \overline{D}$. So, 
\begin{align*}
\int_0^{2\pi} ||\varphi_{nt}(e^{i\theta})||_{\bc}^s \,d\theta
&= \int_0^{2\pi} \left|\left|\frac{e^{\phi(e^{i\theta})}}{e^{\phi(e^{i\theta})}}\varphi_{nt}(e^{i\theta})\right|\right|_{\bc}^s \,d\theta\\
&\leq \frac{2^{s/2}}{m^s} \int_0^{2\pi} ||e^{\phi(e^{i\theta})}\varphi_{nt}(e^{i\theta})||^s_{\bc} \,d\theta\\
&= \frac{2^{s/2}}{m^s} \int_0^{2\pi} ||f_{nt}(e^{i\theta})||^s_{\bc} \,d\theta < \infty
\end{align*}
and $\varphi_{nt} \in L^s(\p D, \bc)$. By Theorem \ref{bcbdinsfuncins}, $\varphi \in H^s(D, \bc)$. By Theorem \ref{thm: bcvekhardybzerorep}, $f \in H^s_{A,0}(D,\bc)$. 
\end{proof}

Now, using the representation of the second kind from the previous subsection, we show the Hardy space boundary behavior extends to the $B \not\equiv 0$ case also. 

\begin{theorem}\label{Thm: bcvekhardybvcon}
Let $0 < p < \infty$. Every $f \in \hpab$ such that $\dbar f= Af + B\overline{f} \in L^q(D, \bc)$, $q>2$, with representation $f = \varphi + T_\bc(Af + B\overline{f})$ and $\varphi^+ \in H^p_w(D)$, where $w \in L^\gamma$, $\gamma>2$ or $1 < \gamma\leq 2$ and $p < \frac{\gamma}{2-\gamma}$, has a nontangential boundary value $f_{nt} \in L^p(\p D)$ and 
\[
\lim_{r \nearrow 1} \int_0^{2\pi} ||f_{nt}(e^{i\theta}) - f(re^{i\theta}) ||^p_{\bc} \, d\theta = 0
\]
The result holds when $1 < q \leq 2$ so long as $p$ satisfies $p < \frac{q}{2-q}$. 
\end{theorem}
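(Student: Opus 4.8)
The plan is to mirror the argument for the $B\equiv 0$ case (Theorem~\ref{thm: bcvekbdinsimprincase}), replacing the similarity-principle representation by the representation of the second kind. Set $g:=Af+B\overline f$, which by hypothesis lies in $L^q(D,\bc)\subseteq L^1(D,\bc)$. First I would apply Proposition~\ref{Prop: bcrepsecond} to write $f=\varphi+T_\bc(g)$ with $\varphi\in\holb$, and then Theorem~\ref{bcvekhardyrepgeneral} (whose hypotheses — including the restriction $p<\frac{q}{2-q}$ in the range $1<q\le 2$ — are exactly those assumed here) to conclude $\varphi\in\hpb$. The problem then decouples: one establishes the asserted boundary behavior separately for $\varphi$ and for $T_\bc(g)$, sets $f_{nt}:=\varphi_{nt}+T_\bc(g)\big|_{\p D}$, and recombines using
\[
\int_0^{2\pi}||f_{nt}(e^{i\theta})-f(re^{i\theta})||_\bc^p\,d\theta \le C_p\left(\int_0^{2\pi}||\varphi_{nt}(e^{i\theta})-\varphi(re^{i\theta})||_\bc^p\,d\theta+\int_0^{2\pi}||T_\bc(g)(e^{i\theta})-T_\bc(g)(re^{i\theta})||_\bc^p\,d\theta\right),
\]
where $C_p$ depends only on $p$; the nontangential limit of $f$ is then $f_{nt}$ almost everywhere, since both summands have the stated nontangential limits and nontangential limits add.

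For the holomorphic summand, the hypothesis $\varphi^+\in H^p_w(D)$ together with the stated relation between $w$ and $p$ is precisely the input required by Theorem~\ref{bcholobvcon}, which yields a nontangential limit $\varphi_{nt}\in L^p(\p D,\bc)$ with $\int_0^{2\pi}||\varphi_{nt}(e^{i\theta})-\varphi(re^{i\theta})||_\bc^p\,d\theta\to 0$ as $r\nearrow 1$.

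For the operator summand I would split on the regularity of $g$. If $q>2$, Theorem~\ref{bctbehavior} gives $T_\bc(g)\in C^{0,\alpha}(\overline D,\bc)$; then $T_\bc(g)$ extends continuously to $\p D$, its boundary restriction lies in $L^p(\p D,\bc)$ and is its nontangential limit at every boundary point, and uniform continuity on the compact $\overline D$ forces the second integral above to tend to $0$. If $1<q\le 2$, choose $\gamma\in[\max\{1,p\},\frac{q}{2-q})$ (a nonempty interval since $p<\frac{q}{2-q}$ and $q>1$); Theorem~\ref{bctbehavior} then gives $T_\bc(g)|_{\p D(0,r)}\in L^\gamma$ and $\int_0^{2\pi}||T_\bc(g)(e^{i\theta})-T_\bc(g)(re^{i\theta})||_\bc^\gamma\,d\theta\to 0$, and since $\p D$ has finite measure and $\gamma\ge p$ this descends to an $L^p(\p D,\bc)$ boundary function with $L^p$-mean convergence. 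To see that this boundary function is genuinely the nontangential limit, note $T_\bc(g)=f-\varphi$ has finite bicomplex-$H^p$ norm, hence so do its idempotent components $(T_\bc(g))^{\pm}$; from $\dbar=p^+\frac{\p}{\p z}+p^-\frac{\p}{\p z^*}$ and $\dbar T_\bc(g)=g$ one reads off $\big((T_\bc(g))^+\big)^*\in H^p_{(g^+)^*}(D)$ and $(T_\bc(g))^-\in H^p_{g^-}(D)$, with $(g^+)^*,g^-\in L^q(D)$ by Proposition~\ref{propLqiff}; Theorem~\ref{thm: nonhomogHpbvcon} (whose $1<q\le 2$ clause needs exactly $p<\frac{q}{2-q}$) then supplies nontangential $L^p$ boundary values for these complex-valued components, and reassembling via $||v||_\bc\le 2^{-1/2}(|v^+|+|v^-|)$ recovers the nontangential $L^p$ boundary value of $T_\bc(g)$, necessarily agreeing with the $L^\gamma$-limit already identified. (One could equally run this component argument directly on $f$ and bypass the representation for this part.)

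I expect the only real obstacle to be this last step in the range $1<q\le 2$: Theorem~\ref{bctbehavior} hands over convergence only in $L^\gamma$-mean, not the existence of an honest pointwise nontangential limit, so one is forced down to the idempotent components of $T_\bc(g)$ (or of $f$) and must invoke the complex-valued nonhomogeneous-Hardy theory of Theorem~\ref{thm: nonhomogHpbvcon} there before reassembling. The remaining ingredients — the representation, the decoupling, the continuity argument when $q>2$, and the final quasi-triangle estimate — are routine.
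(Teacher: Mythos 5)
Your proposal is correct and follows essentially the same route as the paper's proof: decompose $f=\varphi+T_\bc(Af+B\overline f)$ via the representation of the second kind and Theorem \ref{bcvekhardyrepgeneral}, handle $\varphi$ with Theorem \ref{bcholobvcon}, handle the $T_\bc$ term with Theorem \ref{bctbehavior}, and recombine with the quasi-triangle inequality. The only difference is that you supply an extra sub-argument (via the idempotent components and Theorem \ref{thm: nonhomogHpbvcon}) to upgrade the $L^\gamma$-mean convergence of $T_\bc(g)$ on circles to a genuine pointwise nontangential limit in the range $1<q\le 2$ — a point the paper's proof leaves implicit.
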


\begin{proof}
By Theorem \ref{bcvekhardyrepgeneral}, if $f \in H^p_{A,B}(D,\bc)$ with $\dbar f = Af + B\overline{f} \in L^q(D)$, where $q$ and $p$ satisfy the hypothesis, then $f = \varphi + T_\bc(Af + B\overline{f})$ and $\varphi \in H^p(D,\bc)$. By Theorem \ref{bcholobvcon}, $\varphi$ has a nontangential boundary value $\varphi_{nt} \in L^p(\p D,\bc)$ and 
\[
        \lim_{r \nearrow 1} \int_0^{2\pi} ||\varphi_{nt}(e^{i\theta}) - \varphi(re^{i\theta}) ||^p_{\bc} \, d\theta  = 0. 
    \]
By Theorem \ref{bctbehavior}, if $p$ and $q$ satisfy the hypothesis, then $T_\bc(Af + B\overline{f}) \in L^p(\p D,\bc)$ and 
\[
\lim_{r \nearrow 1} \int_0^{2\pi} ||T_\bc(Af + B\overline{f})(e^{i\theta}) - T_\bc(Af + B\overline{f})(re^{i\theta}) ||^p_{\bc} \, d\theta  = 0.
\]
So, $f_{nt}$ exists and is in $L^p(\p D,\bc)$. Also, for $r \in (0,1)$, we have
\begin{align*}
    &\int_0^{2\pi} ||f_{nt}(e^{i\theta}) - f(re^{i\theta}) ||^p_{\bc} \, d\theta \\
    &= \int_0^{2\pi} ||\varphi_{nt}(e^{i\theta}) + T_\bc(Af + B\overline{f})(e^{i\theta}) - (\varphi(re^{i\theta}) + T_\bc(Af + B\overline{f})(re^{i\theta})) ||^p_{\bc} \, d\theta \\
    &\leq C_p \left( \int_0^{2\pi} ||\varphi_{nt}(e^{i\theta}) - \varphi(re^{i\theta})||_\bc^p\,d\theta  + \int_0^{2\pi} || T_\bc(Af + B\overline{f})(e^{i\theta}) - T_\bc(Af + B\overline{f})(re^{i\theta}) ||^p_{\bc} \, d\theta\right) .
\end{align*}
Therefore, 
\begin{align*}
& \lim_{r \nearrow 1} \int_0^{2\pi} ||f_{nt}(e^{i\theta}) - f(re^{i\theta}) ||^p_{\bc} \, d\theta \\
&\leq \lim_{r \nearrow 1} C_p \left( \int_0^{2\pi} ||\varphi_{nt}(e^{i\theta}) - \varphi(re^{i\theta})||_\bc^p\,d\theta  + \int_0^{2\pi} || T_\bc(Af + B\overline{f})(e^{i\theta}) - T_\bc(Af + B\overline{f})(re^{i\theta}) ||^p_{\bc} \, d\theta\right)\\
&\leq  C_p \left( \lim_{r \nearrow 1}\int_0^{2\pi} ||\varphi_{nt}(e^{i\theta}) - \varphi(re^{i\theta})||_\bc^p\,d\theta  + \lim_{r \nearrow 1}\int_0^{2\pi} || T_\bc(Af + B\overline{f})(e^{i\theta}) - T_\bc(Af + B\overline{f})(re^{i\theta}) ||^p_{\bc} \, d\theta\right)\\
&= 0.
\end{align*}

\end{proof}

In \cite{BCAtomic}, $\bc$-holomorphic Hardy space functions were shown to recover the boundary behavior of Hardy-type classes by appealing to the idempotent representation for those functions. These results are Theorems \ref{thm: bchardyrep} and \ref{bcholobvcon} above, see \cite{BCAtomic} for the proofs. Using the idempotent representation from Theorem \ref{thm: bcvekhardyimpliescvekhardy} above for $\bc$-Vekua-Hardy space functions, we appeal to the same argument as in the $\bc$-holomorphic case to show that Hardy-type boundary behavior extends in another case for $\bc$-Vekua-Hardy spaces. Specifically, we need not assume that $B \equiv 0$ as in Theorem \ref{thm: bcvekbdinsimprincase} or that the entire right hand side of the equation is in $L^q(D,\bc)$, $q>2$, as in Theorem \ref{Thm: bcvekhardybvcon}. We need only that the $p^+$ component function from the idempotent representation be an element of a certain generalized Hardy class, as defined in Definition \ref{complexnonhomoghardydeff}. 

\begin{theorem}\label{bcvekhardybvfromidempotent}
 For $A,B \in L^q(D,\bc)$, $q>2$, and $0 < p < \infty$, every $w = p^+ w^+ + p^- w^-\in H^p_{A,B}(D,\bc)$ such that $w^+ \in H^p_g(D)$, where $g \in L^\ell(D)$, and $\ell >2$ or $1 < \ell \leq 2$ and $p$ satisfies $p < \frac{\ell}{2-\ell}$, has a nontangential limit $w_{nt} \in L^p(\p D, \bc)$, and 
    \[
        \lim_{r \nearrow 1} \int_0^{2\pi} ||w_{nt}(e^{i\theta}) - w(re^{i\theta}) ||^p_{\bc} \, d\theta  = 0. 
    \] 
\end{theorem}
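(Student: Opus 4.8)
The plan is to reduce this statement, via the idempotent decomposition from Theorem \ref{thm: bcvekhardyimpliescvekhardy}, to the known boundary behavior of complex-valued generalized Hardy classes, exactly as was done for the $\bc$-holomorphic Hardy spaces in Theorem \ref{bcholobvcon}. First I would write $w = p^+ w^+ + p^- w^-$ and invoke Theorem \ref{thm: bcvekhardyimpliescvekhardy} to conclude that $(w^+)^* \in H^p_{(A^+)^*,(B^+)^*}(D)$ and $w^- \in H^p_{A^-,B^-}(D)$. Since $A, B \in L^q(D,\bc)$ with $q>2$, Proposition \ref{propLqiff} gives $A^\pm, B^\pm \in L^q(D)$, and likewise their complex conjugates are in $L^q(D)$, so the coefficients of both complex Vekua equations lie in $L^q(D)$, $q>2$. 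Hence Theorem \ref{theoremtwopointonepointsix} (together with Theorem \ref{bdinsfuncins}) applies to $w^-$: it has a nontangential boundary value $w^-_{nt} \in L^p(\p D)$ with $\lim_{r\nearrow1}\int_0^{2\pi}|w^-(re^{i\theta}) - w^-_{nt}(e^{i\theta})|^p\,d\theta = 0$.

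For the $p^+$ component I would use the extra hypothesis that $w^+ \in H^p_g(D)$ with $g \in L^\ell(D)$ satisfying $\ell>2$, or $1<\ell\le2$ and $p<\frac{\ell}{2-\ell}$. By Theorem \ref{thm: nonhomogHpbvcon}, this forces $w^+$ to have a nontangential boundary value $w^+_{nt} \in L^p(\p D)$ with $\lim_{r\nearrow1}\int_0^{2\pi}|w^+(re^{i\theta}) - w^+_{nt}(e^{i\theta})|^p\,d\theta = 0$. Taking complex conjugates (and using $|(w^+)^*| = |w^+|$ pointwise, so the $L^p$ difference is unchanged under conjugation), the same convergence holds for $(w^+)^*$ to $(w^+_{nt})^*$. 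Thus each idempotent component of $w$ converges nontangentially in $L^p$ to a boundary function in $L^p(\p D)$.

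Finally I would reassemble: set $w_{nt} := p^+ (w^+_{nt}) + p^- (w^-_{nt})$ — equivalently, $w_{nt}$ is the function whose $p^\pm$ components are $w^\pm_{nt}$ — which lies in $L^p(\p D, \bc)$ by Proposition \ref{propLqiff}. Using the estimate $\|v\|_\bc \le 2^{-1/2}(|v^+| + |v^-|)$ from \eqref{bcbasicestimates} applied to $v = w(re^{i\theta}) - w_{nt}(e^{i\theta})$, whose components are $w^\pm(re^{i\theta}) - w^\pm_{nt}(e^{i\theta})$, and the standard $|a+b|^p \le C_p(|a|^p + |b|^p)$ inequality, we get
\[
\int_0^{2\pi} \|w(re^{i\theta}) - w_{nt}(e^{i\theta})\|_\bc^p\,d\theta \le C_p\left( \int_0^{2\pi} |w^+(re^{i\theta}) - w^+_{nt}(e^{i\theta})|^p\,d\theta + \int_0^{2\pi} |w^-(re^{i\theta}) - w^-_{nt}(e^{i\theta})|^p\,d\theta \right),
\]
and both terms on the right vanish as $r\nearrow1$. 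Existence of the nontangential limit of $w$ itself (rather than just of the $L^p$-mean convergence) follows because nontangential convergence of $w^\pm$ implies nontangential convergence of the bicomplex combination. The main obstacle is purely bookkeeping: correctly tracking the complex-conjugation twist on the $p^+$ slot (the component that actually solves a Vekua equation is $(w^+)^*$, not $w^+$) and checking that the hypothesis $w^+ \in H^p_g(D)$ is the right one to feed into Theorem \ref{thm: nonhomogHpbvcon} after conjugation; there is no serious analytic difficulty once the decomposition is set up.
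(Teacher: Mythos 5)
Your proposal is correct and follows essentially the same route as the paper's own proof: decompose via Theorem \ref{thm: bcvekhardyimpliescvekhardy}, obtain the $L^p$ nontangential boundary value of $w^-$ from Theorems \ref{bdinsfuncins} and \ref{theoremtwopointonepointsix} and that of $w^+$ from the hypothesis $w^+ \in H^p_g(D)$ via Theorem \ref{thm: nonhomogHpbvcon}, then reassemble using \eqref{bcbasicestimates} and the $|a+b|^p \leq C_p(|a|^p+|b|^p)$ inequality. Your explicit tracking of the conjugation twist on the $p^+$ slot is a slightly more careful bookkeeping of a point the paper leaves implicit, but the argument is identical in substance.
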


\begin{proof}
Let $w \in H^p_{A,B}(D,\bc)$. By Theorem \ref{thm: bcvekhardyimpliescvekhardy}, $w = p^+w^+ + p^- w^-$, where $(w^+)^* \in H^p_{(A^+)^*,(B^+)^*}(D,\bc)$ and $w^- \in H^p_{A^-,B^-}(D)$. Suppose also that $w^+ \in H^p_g(D)$, for some $g \in L^\ell(D)$, $\ell > 2$ (or if $1 < \ell \leq 2$, then $p$ satisfies $p < \frac{\ell}{2-\ell}$). By Theorem \ref{thm: nonhomogHpbvcon} and statement (3) of Theorem \ref{VekHardyCombined}, it follows that $w^+$ and $w^-$ have nontangential boundary values $w^+_{nt}$ and $w^-_{nt}$, respectively, $w^+_{nt}, w^-_{nt} \in L^p(\p D)$, and,
\[
\lim_{r \nearrow 1} \int_0^{2\pi} |w^+_{nt}(e^{i\theta}) - w^+(re^{i\theta}) |^p \, d\theta  = 0,
\]
and 
\[
\lim_{r \nearrow 1} \int_0^{2\pi} |w^-_{nt}(e^{i\theta}) - w^-(re^{i\theta}) |^p \, d\theta  = 0.
\]
By linearity, $w_{nt} = p^+ w^+_{nt} + p^- w^-_{nt}$ exists and is in $L^p(\p D,\bc)$. Observe that, for each $r \in (0,1)$, 
\begin{align*}
&\int_0^{2\pi} ||w_{nt}(e^{i\theta}) - w(re^{i\theta}) ||^p_{\bc} \, d\theta \\
&= \int_0^{2\pi} ||p^+ w^+_{nt}(e^{i\theta}) + p^- w^-_{nt}(e^{i\theta}) - (p^+ w^+(re^{i\theta}) + p^- w^-(re^{i\theta})) ||^p_{\bc} \, d\theta \\
&= \int_0^{2\pi} ||p^+ (w^+_{nt}(e^{i\theta})-w^+(re^{i\theta})) + p^- (w^-_{nt}(e^{i\theta}) - w^-(re^{i\theta})) ||^p_{\bc} \, d\theta \\
&\leq \int_0^{2\pi} \left(\frac{1}{\sqrt{2}}(|w^+_{nt}(e^{i\theta})-w^+(re^{i\theta})| + |w^-_{nt}(e^{i\theta}) - w^-(re^{i\theta}) |)\right)^p \, d\theta \\
&\leq \frac{C_p}{2^{p/2}}\left(\int_0^{2\pi} |w^+_{nt}(e^{i\theta})-w^+(re^{i\theta})|^p \,d\theta + \int_0^{2\pi} |w^-_{nt}(e^{i\theta}) - w^-(re^{i\theta}) |^p \, d\theta \right), 
\end{align*}
where $C_p$ is a constant that depends only on $p$. Since 
\[
\lim_{r \nearrow 1} \int_0^{2\pi} |w^+_{nt}(e^{i\theta}) - w^+(re^{i\theta}) |^p \, d\theta  = 0
\]
and 
\[
\lim_{r \nearrow 1} \int_0^{2\pi} |w^-_{nt}(e^{i\theta}) - w^-(re^{i\theta}) |^p \, d\theta  = 0,
\]
it follows that 
\[
\lim_{r \nearrow 1}\frac{C_p}{2^{p/2}}\left(\int_0^{2\pi} |w^+_{nt}(e^{i\theta})-w^+(re^{i\theta})|^p \,d\theta + \int_0^{2\pi} |w^-_{nt}(e^{i\theta}) - w^-(re^{i\theta}) |^p \, d\theta \right) = 0.
\]
Therefore, 
\[
\lim_{r \nearrow 1} \int_0^{2\pi} ||w_{nt}(e^{i\theta}) - w(re^{i\theta}) ||^p_{\bc} \, d\theta = 0.
\]

\end{proof}

\subsection{Distributional Boundary Values}

In this subsection, we show that certain existence results for boundary values in the sense of distributions of complex holomorphic (see \cite{GHJH2, Straube}), complex generalized analytic (see \cite{BerHou, WBD}), and bicomplex holomorphic (see \cite{BCAtomic}) functions extend to bicomplex generalized analytic functions.

We begin with a theorem that extends statement (5) of Theorem \ref{VekHardyCombined}, one direction of which was proved in \cite{BerHou}, to solutions of $\bc$-Vekua equations.

\begin{theorem}
    For $p \geq 1$ and $A, B \in L^q(D)$, $q>2$, $w \in H^p_{A,B}(D,\bc)$ if and only if $w \in H_{A,B}(D,\bc)$, $w_b$ exists, and $w_b \in L^p(\p D, \bc)$. 
\end{theorem}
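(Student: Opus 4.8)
The plan is to reduce the bicomplex statement to the corresponding complex statement (Proposition \ref{berhouextfromwbd}) by passing to the idempotent components, exactly as in the proofs of Theorems \ref{thm: bcvekhardyimpliescvekhardy} and \ref{bcvekhardybvfromidempotent}. Write $w = p^+ w^+ + p^- w^-$, $A = p^+ A^+ + p^- A^-$, $B = p^+ B^+ + p^- B^-$. By Theorem \ref{bcvekimpliescvek}, $w \in H_{A,B}(D,\bc)$ if and only if $(w^+)^* \in H_{(A^+)^*,(B^+)^*}(D)$ and $w^- \in H_{A^-,B^-}(D)$; note that the coefficient functions $(A^+)^*,(B^+)^*,A^-,B^-$ all lie in $L^q(D)$, $q>2$, by Proposition \ref{propLqiff}, so Proposition \ref{berhouextfromwbd} applies to each complex component.

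The key steps, in order, would be: (1) For the forward direction, assume $w \in H^p_{A,B}(D,\bc)$. By Theorem \ref{thm: bcvekhardyimpliescvekhardy}, $(w^+)^* \in H^p_{(A^+)^*,(B^+)^*}(D)$ and $w^- \in H^p_{A^-,B^-}(D)$; then Proposition \ref{berhouextfromwbd} gives that $((w^+)^*)_b$ and $(w^-)_b$ exist and lie in $L^p(\p D)$. Since $w^+ = ((w^+)^*)^*$ and the limit defining the distributional boundary value commutes with complex conjugation when tested against real-valued $\phi \in C^\infty(\p D,\R)$ (as used in the proof of Theorem \ref{bcGHJH23point1}), $(w^+)_b$ also exists, and $|w^+(z)| = |(w^+)^*(z)|$ gives $(w^+)_b \in L^p(\p D)$. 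Writing the pairing $\langle \int_0^{2\pi} w(re^{i\theta})\phi(\theta)\,d\theta\rangle = p^+\int_0^{2\pi} w^+(re^{i\theta})\phi(\theta)\,d\theta + p^-\int_0^{2\pi} w^-(re^{i\theta})\phi(\theta)\,d\theta$ and taking $r \nearrow 1$ shows $w_b = p^+ (w^+)_b + p^- (w^-)_b$ exists; membership $w_b \in L^p(\p D,\bc)$ then follows from Proposition \ref{propLqiff}. (2) For the reverse direction, assume $w \in H_{A,B}(D,\bc)$, $w_b$ exists, and $w_b \in L^p(\p D,\bc)$. Using the estimate $|w^\pm(z)| \leq \sqrt 2 \|w(z)\|_\bc$ from \eqref{bcbasicestimates} applied to the integrals $\int_0^{2\pi} w^\pm(re^{i\theta})\phi(\theta)\,d\theta$ (as in the second paragraph of the proof of Theorem \ref{bcGHJH23point1}), deduce that $(w^+)_b$ and $(w^-)_b$ exist, hence also $((w^+)^*)_b$ exists by conjugation, and by Proposition \ref{propLqiff} they are in $L^p(\p D)$. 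By Theorem \ref{bcvekimpliescvek}, $(w^+)^* \in H_{(A^+)^*,(B^+)^*}(D)$ and $w^- \in H_{A^-,B^-}(D)$, so Proposition \ref{berhouextfromwbd} gives $(w^+)^* \in H^p_{(A^+)^*,(B^+)^*}(D)$ and $w^- \in H^p_{A^-,B^-}(D)$. Finally Theorem \ref{thm: bcvekhardyimpliescvekhardy} yields $w \in H^p_{A,B}(D,\bc)$.

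The only subtlety — and the step I would treat most carefully — is the interchange of distributional boundary values with complex conjugation and with extraction of the $p^\pm$ components. Both are linear-algebraic facts about the fixed idempotent decomposition of $\bc$, combined with the observation that a limit of bicomplex numbers exists iff the limits of its two complex components exist (equivalently, via \eqref{bcbasicestimates}, the sequence is Cauchy componentwise). Testing against $C^\infty(\p D)$ versus $C^\infty(\p D,\R)$ is handled by splitting a general complex-valued test function into real and imaginary parts. No genuinely new estimate is needed beyond \eqref{bcbasicestimates} and \eqref{stareqn}; the content is entirely the translation between the bicomplex picture and the two complex pictures, for which Proposition \ref{berhouextfromwbd} supplies the analytic input.
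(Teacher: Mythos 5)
Your proposal is correct and follows essentially the same route as the paper: decompose $w$ into its idempotent components, transfer the problem to the complex Vekua--Hardy setting via Theorems \ref{bcvekimpliescvek} and \ref{thm: bcvekhardyimpliescvekhardy}, invoke Proposition \ref{berhouextfromwbd} on each component, and handle the complex conjugation of the $p^+$ component when passing distributional boundary values back and forth. The only divergence is in the forward direction, where the paper re-derives the existence of $(w^+)^*_b$ and $w^-_b$ through nontangential boundary values, Theorems \ref{inHardypinLebm}, \ref{theoremtwopointonepointsix}, and \ref{lonedistbv}, whereas you apply the forward implication of Proposition \ref{berhouextfromwbd} directly; both are valid, and your version is slightly more economical while also making explicit the recombination $w_b = p^+(w^+)_b + p^-(w^-)_b$ and the $L^q$ membership of the component coefficients, which the paper leaves implicit.
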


\begin{proof}
First, suppose $w \in H_{A,B}(D,\bc)$, $w_b$ exists, and $w_b \in L^p(\p D)$. Since $w_b$ exists and is in $L^p(\p D,\bc)$, it follows, by Proposition \ref{propLqiff}, that $w_b^\pm$ exist and $w^\pm \in L^p(\p D, \mathbb{C})$. Since 
\begin{align*}
\int_0^{2\pi} (w^+(re^{i\theta}))^*\,\phi(\theta) \,d\theta = \left(\int_0^{2\pi} w^+(re^{i\theta})\,\phi(\theta) \,d\theta \right)^*,
\end{align*}
for every real-valued $\phi \in C^\infty(\p D)$ and $re^{i\theta} \in D$, and $w^+_b$ exists, it follows that $(w^+)^*_b$ exists. By Theorem \ref{thm: bcvekhardyimpliescvekhardy}, $(w^+)^* \in H_{(A^+)^*, (B^+)^*}(D)$ and $w^- \in H_{A^-,B^-}(D)$. Since $(w^+)^* \in H_{(A^+)^*, (B^+)^*}(D)$ and $w^- \in H_{A^-,B^-}(D)$, $(w^+)^*_b$ and $w^-_b$ both exist, and $(w^+)^*_b,w^-_b \in L^p(\p D)$, it follows by statement (5) of Theorem \ref{VekHardyCombined} that $(w^+)^* \in H^p_{(A^+)^*, (B^+)^*}(D)$ and $w^- \in H^p_{A^-,B^-}(D)$. By Theorem \ref{thm: bcvekhardyimpliescvekhardy}, $w \in H^p_{A,B}(D,\bc)$. 

Now, suppose $w \in H^p_{A,B}(D,\bc)$. By Theorem \ref{thm: bcvekhardyimpliescvekhardy}, $(w^+)^* \in H^p_{(A^+)^*, (B^+)^*}(D)$ and $w^- \in H^p_{A^-,B^-}(D)$. By statement (3) of Theorem \ref{VekHardyCombined}, the nontangential boundary values $(w^+)^*_{nt}$ and $w^-_{nt}$ exist and are in $L^p(\p D)$. By statement (4) of Theorem \ref{VekHardyCombined},  $(w^+)^*, w^- \in L^1(D)$. By part (3) of Theorem \ref{VekHardyCombined} again, 
\[
\lim_{r\nearrow 1}\int_0^{2\pi} |(w^+)^*(re^{i\theta}) - (w^+)^*_{nt}(e^{i\theta})|^p \,d\theta = 0
\]
and
\[
\lim_{r\nearrow 1}\int_0^{2\pi} |w^-(re^{i\theta}) - w^-_{nt}(e^{i\theta})|^p \,d\theta = 0.
\]
By Theorem \ref{lonedistbv}, since $(w^+)^*, w^- \in L^1(D)$, $(w^+)^*_{nt}, w^-_{nt} \in L^p(\p D) \subset L^1(\p D)$, and 
\[
\lim_{r\nearrow 1}\int_0^{2\pi} |(w^+)^*(re^{i\theta}) - (w^+)^*_{nt}(e^{i\theta})| \,d\theta = 0
\]
and
\[
\lim_{r\nearrow 1}\int_0^{2\pi} |w^-(re^{i\theta}) - w^-_{nt}(e^{i\theta})| \,d\theta = 0.
\]
it follows that $(w^+)^*_{b}$ and $w^-_{b}$ exist. Also, by Theorem \ref{lonedistbv} $(w^+)^*_{nt} = (w^+)^*_{b}$ and $w^-_{nt} = w^-_{b}$.

\end{proof}

Next, we show that in the $A \in L^q(D,\bc)$, $q>2$ and $ B\equiv 0$ case, i.e., where we have a representation of the first kind, there is a condition that mirrors the statement of Theorem \ref{bcGHJH23point1} for existence of distributional boundary values for solutions to the corresponding $\bc$-Vekua equation. This directly extends Lemma 5.9 from \cite{WBD} to the bicomplex setting, and the proof closely follows the argument of that result.

\begin{prop}
Let $A \in L^q(D, \bc)$, $q>2$, and $w \in H_{A,0}(D,\bc)$. The $\bc$-holomorphic factor $\varphi$ (appearing in the representation $w = \varphi e^\phi$ provided by Theorem \ref{thm: bcvekbzerorep}) has a boundary value in the sense of distributions $\varphi_b$ if and only if there exist constants $C>0$ and $\alpha \geq 0$ such that 
\[
    ||w(re^{i\theta})||_{\bc} \leq \frac{C}{(1-r)^\alpha},
\]
for all $re^{i\theta} \in D$.
\end{prop}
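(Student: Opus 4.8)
The plan is to reduce the claim to Theorem~\ref{bcGHJH23point1} applied to the $\bc$-holomorphic factor $\varphi$, after observing that $w$ and $\varphi$ have comparable bicomplex norms on $D$. First I would invoke Theorem~\ref{thm: bcvekbzerorep} to write $w = \varphi e^\phi$ with $\varphi \in \holb$ and $\phi = T_\bc(A) \in C^{0,\alpha}(\overline{D},\bc)$, where $\alpha = \frac{q-2}{q}$; this is legitimate since $A \in L^q(D,\bc)$ with $q > 2$.

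The heart of the argument is a two-sided bound on $e^\phi$. Writing $e^\phi = p^+ (e^\phi)^+ + p^- (e^\phi)^-$ as in the proof of Theorem~\ref{thm: bcvekhardybzerorep}, the components $(e^\phi)^\pm$ are continuous on the compact set $\overline{D}$ — hence bounded — and, since $e^v$ is never a zero divisor, they are also bounded away from $0$. Thus there exist constants $0 < m \leq M$ with $m \leq |(e^{\phi(z)})^\pm| \leq M$ for all $z \in \overline{D}$. Because multiplication in $\bc$ is componentwise in the idempotent representation, $w^\pm = \varphi^\pm (e^\phi)^\pm$, so $m\,|\varphi^\pm(z)| \leq |w^\pm(z)| \leq M\,|\varphi^\pm(z)|$, and from the definition of $||\cdot||_\bc$ this yields
\[
    m\,||\varphi(z)||_\bc \;\leq\; ||w(z)||_\bc \;\leq\; M\,||\varphi(z)||_\bc, \qquad z \in D.
\]

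From this comparison it is immediate that $w$ satisfies $||w(re^{i\theta})||_\bc \leq C(1-r)^{-\alpha}$ on $D$ for some $C > 0$ and $\alpha \geq 0$ if and only if $\varphi$ satisfies a bound of the same form (only the constant changes, the exponent $\alpha$ is preserved). Then I would apply Theorem~\ref{bcGHJH23point1} to $\varphi$, which is legitimate because $\varphi \in Hol(D,\bc)$: that theorem tells us the growth bound for $\varphi$ holds if and only if $\varphi_b$ exists. Chaining these two equivalences gives the statement. I do not expect a genuine obstacle here; the only substantive ingredient is the boundedness of $e^\phi$ above and below, which is already exploited in the proof of Theorem~\ref{thm: bcvekhardybzerorep}, and the remaining work is purely the dictionary furnished by Theorem~\ref{bcGHJH23point1}.
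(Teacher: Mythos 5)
Your proposal is correct and follows essentially the same route as the paper: both reduce the claim to Theorem \ref{bcGHJH23point1} applied to $\varphi \in Hol(D,\bc)$ after establishing that $||w||_\bc$ and $||\varphi||_\bc$ are comparable on $D$, using that $e^\phi$ is continuous on $\overline{D}$ and never a zero divisor. The only (immaterial) difference is that the paper obtains the two-sided comparison from the submultiplicative estimate \eqref{stareqn} applied to $e^{\phi}$ and $e^{-\phi}$, whereas you derive it componentwise in the idempotent representation.
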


\begin{proof}
By Theorem \ref{thm: bcvekbzerorep}, every $w \in H_{A,0}(D,\bc)$, where $A \in L^1(D)$, has a representation 
\[
    w = \varphi e^\phi,
\]
where $\varphi \in Hol(D,\bc)$ and, if $q>2$, $\phi \in C^{0, \alpha}(\overline{D},\bc) \subset L^\infty(\overline{D},\bc)$. 

Suppose $\varphi_b$ exists. By Theorem \ref{bcGHJH23point1}, there exists $C> 0$ and $\alpha \geq 0$ such that 
\[
        ||\varphi(re^{i\theta})||_{\bc} \leq \frac{C}{(1-r)^\alpha},
\]
for all $re^{i\theta} \in D$. Observe that 
\begin{align*}
    ||w(re^{i\theta})||_{\bc} 
    &= ||\varphi(re^{i\theta}) e^{\phi(re^{i\theta})}||_{\bc} \\
    &\leq \sqrt{2}||\varphi(re^{i\theta})||_{\bc}\,||e^{\phi(re^{i\theta})}||_{\bc}\\
    &\leq \sqrt{2}||\varphi(re^{i\theta})||_{\bc}\,||e^{\phi}||_{L^\infty}\\
    &\leq \sqrt{2}\,||e^{\phi}||_{L^\infty}\,\frac{C}{(1-r)^\alpha},
\end{align*}
for every $re^{i\theta} \in D$. Letting $\widetilde{C}:= C\sqrt{2}\,||e^{\phi}||_{L^\infty}$, we have
\[
||w(re^{i\theta})||_{\bc}  \leq \frac{\widetilde{C}}{(1-r)^\alpha}
\]
for every $re^{i\theta} \in D$.

Now, assume there are constants $C>0$ and $\alpha \geq 0$ such that 
\[
||w(re^{i\theta})||_{\bc}  \leq \frac{C}{(1-r)^\alpha}
\]
for every $re^{i\theta} \in D$. Since $e^\phi \in C^{0,\alpha}(\overline{D},\bc)$ and $e^\phi$ is neither zero nor a zero divisor, it follows that $\left|\left|\frac{1}{e^{\phi(re^{i\theta})}}\right|\right|_{\bc}$ is bounded, i.e., there exists $N>0$ such that
\[
    \left|\left|\frac{1}{e^{\phi(re^{i\theta})}}\right|\right|_{\bc} < N,
\]
for all $re^{i\theta} \in D$. Therefore, 
\begin{align*}
||\varphi(re^{i\theta})||_{\bc} 
&= \left|\left|\frac{w(re^{i\theta})}{e^{\phi(re^{i\theta})}}\right|\right|_{\bc} \\
&\leq \sqrt{2}\,\left|\left|\frac{1}{e^{\phi(re^{i\theta})}}\right|\right|_{\bc}\, ||w(re^{i\theta})||_{\bc} \\
&\leq \sqrt{2}\,N\, ||w(re^{i\theta})||_{\bc}\\
&\leq \sqrt{2}\,N\, \frac{C}{(1-r)^\alpha},
\end{align*}
for every $re^{i\theta} \in D$. By Theorem \ref{bcGHJH23point1}, $\varphi_b$ exists. 
\end{proof}

\section{Bicomplex Polyanalytic Hardy Spaces}\label{BCPolyHardyspaces}

In this section, we consider solutions to a higher-order bicomplex differential equation that generalizes the polyanalytic functions studied by Balk \cite{Balk} (see also \cite{NVPolyText}). Specifically, we show that certain properties of the poly-Hardy spaces of \cite{polyhardy} extend to the bicomplex setting. 

\subsection{Definition}

We begin by defining the bicomplex polyanalytic functions and their associated Hardy spaces.

\begin{deff}
    Let $n$ be a positive integer. We define the bicomplex polyanalytic functions $P^n(D,\bc)$ to be the set of functions $f: D \to\bc$ such that 
    \[
        \dbar^n f = 0. 
    \]
\end{deff}

\begin{deff}
    Let $0 < p < \infty$ and $n$ be a positive integer. We define the bicomplex polyanalytic Hardy spaces $P^{n,p}(D,\bc)$ to be the set of functions $f \in P^n(D,\bc)$ such that 
    \[
        \sum_{k = 0}^{n-1}||\dbar^k f||_{H^p_{\bc}} < \infty. 
    \]
\end{deff}

\subsection{Representation}

It is well known that the complex polyanalytic functions are precisely the polynomials in $z^*$ with coefficients in $Hol(D)$. We show that this structure is inherited by $\bc$-polyanalytic functions where the role of $z^*$ is played by its bicomplexification $\widehat{z^*}$ and the coefficients are elements of $Hol(D,\bc)$. The structure of the proof follows that of the complex case from \cite{Balk}.

\begin{theorem}\label{bcpolyrep}
    Let $n$ be a positive integer. Every $f \in P^{n}(D,\bc)$ is representable as 
    \[
        f = \sum_{k = 0}^{n-1} \widehat{z^*}^k \varphi_k,
    \]
    where $\varphi_k \in \holb$, for each $k$. 
\end{theorem}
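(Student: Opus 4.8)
The plan is to induct on $n$, following Balk's argument for complex polyanalytic functions while tracking the bicomplex structure. For the base case $n=1$, the equation $\dbar f = 0$ is by definition the statement $f \in \holb$, so $f = \widehat{z^*}^{\,0}\varphi_0$ with $\varphi_0 := f$. For the inductive step, assume the statement holds for $n-1$ and take $f \in P^n(D,\bc)$. Since $\dbar^{\,n-1}(\dbar f) = \dbar^n f = 0$, we have $\dbar f \in P^{n-1}(D,\bc)$, so the inductive hypothesis gives $\dbar f = \sum_{k=0}^{n-2}\widehat{z^*}^{\,k}\psi_k$ with each $\psi_k \in \holb$.

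Next I would exhibit an explicit $\dbar$-antiderivative of this expression. Set
\[
g := \sum_{k=0}^{n-2}\frac{1}{k+1}\,\widehat{z^*}^{\,k+1}\psi_k .
\]
Because $\dbar = \tfrac12(\p_x + j\,\p_y)$ has constant coefficients and $j$ is central in $\bc$, the operator $\dbar$ obeys the Leibniz rule on products of $\bc$-valued functions; together with $\dbar\,\widehat{z^*}^{\,m} = m\,\widehat{z^*}^{\,m-1}$ (recorded in Section \ref{background}) and $\dbar\psi_k = 0$, this yields $\dbar g = \sum_{k=0}^{n-2}\widehat{z^*}^{\,k}\psi_k = \dbar f$. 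Hence $\dbar(f-g) = 0$, so $f - g =: \varphi_0 \in \holb$, and reindexing $g$ gives
\[
f \;=\; \varphi_0 + \sum_{k=0}^{n-2}\frac{1}{k+1}\,\widehat{z^*}^{\,k+1}\psi_k \;=\; \sum_{k=0}^{n-1}\widehat{z^*}^{\,k}\varphi_k ,
\]
where $\varphi_k := \tfrac{1}{k}\psi_{k-1}$ for $1 \le k \le n-1$. Each $\varphi_k$ lies in $\holb$ because $\holb$ is closed under multiplication by real (hence bicomplex) scalars, completing the induction.

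The only points needing care are routine: one must verify the Leibniz rule for $\dbar$ on bicomplex-valued products, which reduces to the centrality of $j$ and the bilinearity of the $\bc$-multiplication; and one must note that each integer denominator $k+1$ is a nonzero real number, hence not a zero divisor in $\bc$, so the scalar factors $\tfrac{1}{k+1}$ are legitimate. I do not expect any genuine obstacle beyond this bookkeeping.

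As an alternative route one could instead pass to idempotent components: writing $f = p^+f^+ + p^-f^-$, one computes $\dbar^n f = p^+\p_z^{\,n}f^+ + p^-\p_{z^*}^{\,n}f^-$, so $\dbar^n f = 0$ forces $(f^+)^*$ and $f^-$ to be ordinary complex polyanalytic functions of order $n$; applying the classical (Balk) representation componentwise and recombining via the identity $\widehat{z^*} = p^+ z + p^- z^*$ recovers the claimed form with $\varphi_k = p^+(a_k)^* + p^- b_k$. I would carry out the induction as the primary argument and mention this idempotent version as a remark.
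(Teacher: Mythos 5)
Your proposal is correct and follows essentially the same route as the paper: induction on $n$, applying the inductive hypothesis to $\dbar f$, integrating term by term via $g = \sum_{k}\frac{1}{k+1}\widehat{z^*}^{\,k+1}\psi_k$ so that $f-g \in Hol(D,\bc)$, and reindexing. The idempotent-component argument you sketch as an alternative is a reasonable aside, but the primary induction matches the paper's proof in both structure and detail.
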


\begin{proof}
Observe that functions of the form $f(z) = \sum_{k = 0}^{n-1} \widehat{z^*}^k \varphi_k(z)$, where $\varphi_k \in \holb$, for every $k$, are elements of $P^n(D,\bc)$ by direct computation. 

We prove the other direction by induction. The result is clear for $n = 1$. Suppose that, for every $n$ satisfying $1 \leq n \leq m-1$, that if $f \in P^n(D, \bc)$, then there exists a collection $\{\varphi_k\}_{k=0}^{n-1}$ such that $\varphi_k\in \holb$, for every $k$, and
\[
    f = \sum_{k = 0}^{n-1} \widehat{z^*}^k \varphi_k
\]
Let $g \in P^m(D, \bc)$. So, $\dbar g \in P^{m-1}(D, \bc)$, and there exists a collection $\{\varphi_k\}_{k=0}^{m-2}$ such that $\varphi_k \in \holb$, for every $k$, and 
\[
    \dbar g = \sum_{k=0}^{m-2} \widehat{z^*}^k \varphi_k.
\]
Observe that 
\begin{align*}
    \dbar\left( g - \sum_{k=0}^{m-2} \frac{1}{k+1}\widehat{z^*} ^{k+1} \varphi_k\right)  
    &= \dbar g - \dbar \left( \sum_{k=0}^{m-2} \frac{1}{k+1}\widehat{z^*} ^{k+1} \varphi_k\right) \\
    &= \sum_{k=0}^{m-2} \widehat{z^*}^k \varphi_k - \sum_{k=0}^{m-2} \widehat{z^*}^k \varphi_k = 0.
\end{align*}
Thus, $g - \sum_{k=0}^{m-2} \frac{1}{k+1}\widehat{z^*} ^{k+1} \varphi_k \in \holb$. Let 
\[
\gamma = g - \sum_{k=0}^{m-2} \frac{1}{k+1}\widehat{z^*} ^{k+1} \varphi_k.
\]
Let $\phi_0 := \gamma$ and $\phi_{k+1} := \frac{1}{k+1} \varphi_k$, for every $0 \leq k \leq m-2$. Then we rearrange to have
\[
    g = \sum_{j = 0}^{m-1} \widehat{z^*}^j \phi_j.
\]

\end{proof}

The next theorem generalizes Theorem 2.1 of \cite{polyhardy} which shows that a function is in the complex poly-Hardy space if and only if the coefficients in its polynomial representation are in $H^p(D)$. 

\begin{theorem}\label{thm: polyhardyrep}
    Let $0 < p < \infty$ and $n$ be a positive integer. A function $f$ is in $P^{n,p}(D,\bc)$ with representation 
    \[
        f = \sum_{k = 0}^{n-1} \widehat{z^*}^k \varphi_k
    \]
    if and only if $\varphi_k  \in \hpb$, for each $k$. 
\end{theorem}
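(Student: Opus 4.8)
The plan is to prove both implications by relating the $\bc$-polyanalytic setup to the complex-valued poly-Hardy theory of \cite{polyhardy} through the idempotent decomposition, mirroring the argument used for Theorem \ref{thm: bchardyrep} and its Vekua analogue Theorem \ref{thm: bcvekhardyimpliescvekhardy}. First I would write $f = p^+ f^+ + p^- f^-$ and $\varphi_k = p^+\varphi_k^+ + p^-\varphi_k^-$, and note that since $\widehat{z^*} = p^+ z^* + p^- z^*$ (because $\widehat{z^*} = x - jy$ has scalar part $x$ and vector part $-y$, so by Proposition \ref{everybchasplusandminus} its $\pm$ components are $x \pm iy \cdot(\mp1)$... in fact $\widehat{z^*}^\pm = z^*$ in both slots — this is the key structural fact I would verify carefully), the representation $f = \sum_{k=0}^{n-1}\widehat{z^*}^k\varphi_k$ separates into $f^+ = \sum_{k=0}^{n-1}(z^*)^k\varphi_k^+$ and $f^- = \sum_{k=0}^{n-1}(z^*)^k\varphi_k^-$. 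Taking complex conjugates, $(f^+)^* = \sum_{k=0}^{n-1}z^k(\varphi_k^+)^*$ is a polynomial in $z$ with coefficients $(\varphi_k^+)^* \in Hol(D)$, hence $(f^+)^*$ is a (conjugate-)polyanalytic function in the ordinary complex sense, and $f^-$ is genuinely polyanalytic (a polynomial in $z^*$).

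Next I would connect the $\bc$-poly-Hardy norm to the complex poly-Hardy norms. Using the basic estimates \eqref{bcbasicestimates} together with the fact that $\dbar^k f = p^+ \partial_z^k f^+ + p^- \partial_{z^*}^k f^-$ (from the idempotent form of $\dbar$ in Definition \ref{bcdbardef}), one sees $\|\dbar^k f\|_{H^p_\bc}$ is comparable, up to $p$-dependent constants, to $\|\partial_z^k f^+\|_{H^p} + \|\partial_{z^*}^k f^-\|_{H^p}$; and since $|\partial_z^k f^+| = |(\partial_z^k f^+)^*| = |\partial_{z^*}^k (f^+)^*|$ pointwise, this equals $\|\partial_{z^*}^k (f^+)^*\|_{H^p} + \|\partial_{z^*}^k f^-\|_{H^p}$. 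Summing over $k$ from $0$ to $n-1$ shows that $f \in P^{n,p}(D,\bc)$ if and only if both $(f^+)^* \in P^{n,p}(D)$ (the complex conjugate-polyanalytic poly-Hardy class) and $f^- \in P^{n,p}(D)$ in the complex sense. I would then invoke Theorem 2.1 of \cite{polyhardy}: the complex poly-Hardy membership is equivalent to each coefficient $(\varphi_k^+)^* \in H^p(D)$ and each $\varphi_k^- \in H^p(D)$. Finally, by Theorem \ref{thm: bchardyrep}, $\varphi_k = p^+\varphi_k^+ + p^-\varphi_k^- \in \hpb$ precisely when $(\varphi_k^+)^*, \varphi_k^- \in H^p(D)$, closing the loop in both directions.

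One technical point I would be careful about: the one-variable poly-Hardy norm in \cite{polyhardy} is $\sum_{k=0}^{n-1}\|\partial_{z^*}^k(\cdot)\|_{H^p}$ for functions satisfying $\partial_{z^*}^n(\cdot) = 0$; for $(f^+)^*$ this matches directly, but for $f^+$ itself (a polynomial in $z$, i.e. satisfying $\partial_z^n f^+ = 0$) I apply the theory to its conjugate, which is the standard trick and is harmless since conjugation is an isometry on $H^p$ and intertwines $\partial_z$ with $\partial_{z^*}$. The main obstacle — really the only nonroutine step — is verifying that $\widehat{z^*}$ has the same $p^+$ and $p^-$ components (both equal to $z^*$), since this is what makes the idempotent decomposition of the polynomial representation clean; once that identity is in hand, everything else is bookkeeping with \eqref{bcbasicestimates}, \eqref{stareqn}, Theorem \ref{thm: bchardyrep}, and the cited result from \cite{polyhardy}.
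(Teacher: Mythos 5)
Your overall strategy (reduce to the complex poly-Hardy theory via the idempotent decomposition, then quote Theorem 2.1 of \cite{polyhardy} and Theorem \ref{thm: bchardyrep}) is viable and genuinely different from the paper's proof, but the identity you single out as ``the key structural fact'' is false as you state it. You claim $\widehat{z^*}^{\pm} = z^*$ in both slots. Computing with Proposition \ref{everybchasplusandminus}: $\widehat{z^*} = x - jy$ has $\sca\widehat{z^*} = x$ and $\vect\widehat{z^*} = -y$, so $\widehat{z^*}^{\pm} = x \mp i(-y) = x \pm iy$, i.e.\ $\widehat{z^*} = p^+ z + p^- z^*$ --- the $p^+$ component is $z$, not $z^*$. (Sanity check: with your version, $\dbar\,\widehat{z^*} = p^+\partial_z z^* + p^-\partial_{z^*}z^* = p^-$, contradicting the paper's stated identity $\dbar\,\widehat{u^*}^n = n\widehat{u^*}^{n-1}$; with the corrected version one gets $p^+ + p^- = 1$ as required.) Consequently your intermediate decomposition is backwards: the correct one is $f^+ = \sum_k z^k\varphi_k^+$ and $f^- = \sum_k (z^*)^k\varphi_k^-$, so that $(f^+)^* = \sum_k (z^*)^k(\varphi_k^+)^*$ is the genuinely polyanalytic object (with holomorphic coefficients $(\varphi_k^+)^*$) and $f^+$ itself is the conjugate-polyanalytic one. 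With that correction the rest of your bookkeeping --- $\dbar^k f = p^+\partial_z^k f^+ + p^-\partial_{z^*}^k f^-$, the norm comparison via \eqref{bcbasicestimates}, the conjugation trick, and the two cited theorems --- goes through and closes both implications. As written, though, the proof fails exactly at the step you flagged as the crux, and your follow-on claim that $f^+$ is ``a polynomial in $z^*$'' propagates the error.

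For comparison, the paper does not pass through \cite{polyhardy} at all: it argues directly in the bicomplex norm. The forward implication (coefficients in $H^p(D,\bc)$ imply $f \in P^{n,p}(D,\bc)$) is a direct estimate of $\int_0^{2\pi}\|f(re^{i\theta})\|_\bc^p\,d\theta$ using the quasi-triangle inequality, \eqref{stareqn}, and $\|\widehat{(re^{i\theta})^*}\|_\bc \le 1$; the reverse implication uses the explicit inversion formula $\varphi_k = \frac{1}{k!}\sum_{j=0}^{n-1-k}\frac{(-1)^j}{j!}\widehat{z^*}^j\,\dbar^{k+j}f$ to bound each $\|\varphi_k\|_{H^p_\bc}$ by the sum $\sum_\ell\|\dbar^\ell f\|_{H^p_\bc}$ defining the space. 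Your route, once repaired, would buy a cleaner transfer of the full one-variable theory (and automatically controls all the $\dbar^k f$ norms simultaneously), at the cost of leaning on the external result from \cite{polyhardy}; the paper's route is self-contained and parallels its treatment of the meta-analytic and HOIV cases, where no idempotent splitting of the polynomial variable is available.
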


\begin{proof}
Suppose that $\{\varphi_k\}_{k=0}^{n-1}\subset H^p(D,\bc)$. Then 
\[
    f = \sum_{k=0}^{n-1}\widehat{z^*}^k\varphi_k
\]
solves
\[
    \frac{\p^n f}{\p\z^n} = 0,
\]
and, for each $r \in (0,1)$,  
\begin{align*}
    \int_0^{2\pi} ||f(re^{i\theta})||_{\bc}^p \,d\theta 
    &= \int_0^{2\pi} \left|\left|\sum_{k=0}^{n-1}\widehat{(re^{i\theta})^*}^k\varphi_k(re^{i\theta})\right|\right|_{\bc}^p \,d\theta\\
    &\leq C\sum_{k=0}^{n-1}\int_0^{2\pi} \left|\left|\widehat{(re^{i\theta})^*}^k\varphi_k(re^{i\theta})\right|\right|_{\bc}^p \,d\theta\\
    &\leq C\sum_{k=0}^{n-1}2^{p/2}\int_0^{2\pi} \left|\left|\widehat{(re^{i\theta})^*}^k\right|\right|_\bc^p\left|\left|\varphi_k(re^{i\theta})\right|\right|_{\bc}^p \,d\theta\\
    &\leq C \sum_{k=0}^{n-1}2^{p/2}\int_0^{2\pi} \left|\left|\varphi_k(re^{i\theta})\right|\right|_{\bc}^p \,d\theta < \infty,
\end{align*}
where $C$ is a constant that depends on only $p$ and $n$. Thus, 
\[
 \sup_{0 < r < 1}\int_0^{2\pi} ||f(re^{i\theta})||_{\bc}^p \,d\theta \leq  C \sum_{k=0}^{n-1}2^{p/2}\int_0^{2\pi} \left|\left|\varphi_k(re^{i\theta})\right|\right|_{\bc}^p \,d\theta < \infty,
\]
and $f \in P^{n,p}(D, \bc)$. 

Now, suppose that $f \in P^{n,p}(D,\bc)$. By Theorem \ref{bcpolyrep}, it follows that there exist $\varphi_k \in \holb$, for every $k$, such that 
\[
    f = \sum_{k=0}^{n-1}\widehat{z^*}^k\varphi_k.
\]  
Observe that the explicit form of $\varphi_k$ can be given by 
\begin{align*}
    \varphi_k(z) = \dfrac{1}{k!}\sum_{j=0}^{n-1-k}\dfrac{(-1)^j}{j!}\widehat{z^*}^j\, \dbar^{k+j}f, 
\end{align*}
for each $k$ (see \cite{l2poly} or \cite{itvek, itvekbvp, WBD} for the motivation for this formula in the complex setting). Hence, for each $k$ and $r \in (0,1)$, we have
\begin{align*}
\int_0^{2\pi} \left|\left|\varphi_k(re^{i\theta})\right|\right|_{\bc}^p \,d\theta
&= \int_0^{2\pi} \left|\left|\dfrac{1}{k!}\sum_{j=0}^{n-1-k}\dfrac{(-1)^j}{j!}\widehat{(re^{i\theta})^*}^j\, \dbar^{k+j}f(re^{i\theta})\right|\right|_{\bc}^p \,d\theta \\
&\leq C_p \sum_{j=0}^{n-1-k}\int_0^{2\pi} \left|\left|\dbar^{k+j}f(re^{i\theta})\right|\right|_{\bc}^p \,d\theta\\
&\leq C_p \sum_{j=0}^{n-1-k}\sup_{0 < r < 1}\int_0^{2\pi} \left|\left|\dbar^{k+j}f(re^{i\theta})\right|\right|_{\bc}^p \,d\theta,
\end{align*}
where $C_p$ is a constant that depends on only $p$ and $n$. Since $f \in P^{n,p}(D, \bc)$, it follows that 
\[
\sup_{0 < r < 1}\int_0^{2\pi} \left|\left|\dbar^{\ell}f(re^{i\theta})\right|\right|_{\bc}^p \,d\theta < \infty,
\]
for all $\ell$ satisfying $0 \leq \ell \leq n-1$. Thus, 
\[
C_p \sum_{j=0}^{n-1-k}\sup_{0 < r < 1}\int_0^{2\pi} \left|\left|\dbar^{k+j}f(re^{i\theta})\right|\right|_{\bc}^p \,d\theta < \infty,
\]
so
\[
\sup_{0 < r < 1} \int_0^{2\pi}\left|\left|\varphi_k(re^{i\theta})\right|\right|_{\bc}^p \,d\theta \leq C_p \sum_{j=0}^{n-1-k}\sup_{0 < r < 1}\int_0^{2\pi} \left|\left|\dbar^{k+j}f(re^{i\theta})\right|\right|_{\bc}^p \,d\theta < \infty.
\]
Therefore, $\varphi_k \in H^p(D,\bc)$, for every $k$. 

\end{proof}

Using the representation from the last theorem, we now show that Proposition \ref{Prop: bchardymlessthan2p} extends to the bicomplex-poly-Hardy spaces. 

\begin{prop}
For $0 < p < \infty$ and $n$ a positive integer, every $f \in P^{n,p}(D,\bc)$ is an element of $L^m(D,\bc)$, for all $0 < m < 2p$. 
\end{prop}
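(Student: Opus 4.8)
The plan is to use the polynomial representation $f = \sum_{k=0}^{n-1} \widehat{z^*}^k \varphi_k$ provided by Theorem \ref{bcpolyrep}, and then invoke Theorem \ref{thm: polyhardyrep} to conclude that each coefficient $\varphi_k$ lies in $H^p(D,\bc)$. From there, Proposition \ref{Prop: bchardymlessthan2p} immediately gives $\varphi_k \in L^m(D,\bc)$ for all $0 < m < 2p$ and each $k$. The only remaining work is to control the multiplication by the powers $\widehat{z^*}^k$, which is harmless on the bounded domain $D$.

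First I would write $f = \sum_{k=0}^{n-1} \widehat{z^*}^k \varphi_k$ using Theorem \ref{bcpolyrep}, and observe by Theorem \ref{thm: polyhardyrep} that $\varphi_k \in H^p(D,\bc)$ for every $k$. Next I would fix $m$ with $0 < m < 2p$ and apply Proposition \ref{Prop: bchardymlessthan2p} to each $\varphi_k$ to get $\varphi_k \in L^m(D,\bc)$. Then I would note that for $z \in D$ we have $\|\widehat{z^*}\|_\bc = |z| < 1$ (the bicomplexification of a complex number $u=x+iy$ is $x+jy$, and a direct computation of the idempotent components gives $\|\widehat{u}\|_\bc = |u|$), so $\|\widehat{z^*}^k\|_\bc \le (\sqrt 2)^{k-1} |z|^k \le (\sqrt 2)^{n}$ for all $k \le n-1$, using \eqref{stareqn} repeatedly. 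Combining these with \eqref{stareqn} once more yields, for a constant $C_{m,n}$ depending only on $m$ and $n$,
\[
\iint_D \|f(z)\|_\bc^m \,dx\,dy \le C_{m,n} \sum_{k=0}^{n-1} \iint_D \|\widehat{z^*}^k\|_\bc^m \, \|\varphi_k(z)\|_\bc^m \,dx\,dy \le C_{m,n} (\sqrt 2)^{nm} \sum_{k=0}^{n-1} \iint_D \|\varphi_k(z)\|_\bc^m \,dx\,dy < \infty,
\]
where the first inequality also uses the elementary bound $\|\sum_{k} a_k\|_\bc^m \le C_{m,n}\sum_k \|a_k\|_\bc^m$ valid for any finite sum. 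This shows $f \in L^m(D,\bc)$.

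I do not expect any real obstacle here: the argument is a direct transcription of the proof of Theorem \ref{inBCVekHardypinLebm} (which handled the $e^\phi$ factor) with the bounded continuous factor $e^\phi$ replaced by the bounded polynomial factors $\widehat{z^*}^k$. The only mildly delicate point is keeping track of the constants introduced by the bicomplex multiplication estimate \eqref{stareqn} when expanding the $m$-th power of a sum and when iterating products, but these are all finite and depend only on $p$, $m$, and $n$, so they cause no difficulty.
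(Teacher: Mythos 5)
Your proposal is correct and follows essentially the same route as the paper: represent $f = \sum_{k=0}^{n-1} \widehat{z^*}^k \varphi_k$ via Theorem \ref{thm: polyhardyrep}, apply Proposition \ref{Prop: bchardymlessthan2p} to each coefficient, and absorb the bounded factors $\widehat{z^*}^k$ into a constant depending on $n$ and $m$. Your explicit verification that $\|\widehat{u}\|_\bc = |u|$ is a nice touch the paper leaves implicit, but it is the same argument.
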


\begin{proof}
 By Theorem \ref{thm: polyhardyrep}, every $f \in P^{n,p}(D,\bc)$ is representable as $f = \sum_{k = 0}^{n-1} \widehat{z^*}^k f_k$, where $f_k \in H^p(D,\bc)$, for every $k$. Since $f_k \in H^p(D,\bc)$, it follows by Proposition \ref{Prop: bchardymlessthan2p} that $f_k \in L^m(D,\bc)$, for every $0 < m < 2p$. Observe that 
 \begin{align*}
    \iint_D ||f(z)||_{\bc}^m \,dx\,dy 
    &= \iint_D \left|\left|\sum_{k = 0}^{n-1} \widehat{z^*}^k f_k(z)\right|\right|_{\bc}^m \,dx\,dy \\
    &\leq C\sum_{k = 0}^{n-1}\iint_D \left|\left|  f_k(z)\right|\right|_{\bc}^m \,dx\,dy < \infty,
 \end{align*}
 where $C$ is a constant that depends on $n$ and $m$. Thus, $f \in L^m(D,\bc)$. 
 
\end{proof}

\subsection{Boundary Behavior}

In \cite{polyhardy}, the complex poly-Hardy spaces were shown to exhibit the boundary behavior of functions in $H^p(D)$. In the bicomplex poly-Hardy setting, we show that this boundary behavior is recovered with an appeal to the generalized Hardy classes, as was needed for the $\bc$-Vekua-Hardy classes.

\begin{theorem}
    For $n$ a positive integer, $0 < p < \infty$, and $w_k \in L^{q_k}(D,\bc)$, $0 \leq k \leq n-1$, such that $p$ and $q_k$ satisfy $q_k>2$ or $1 < q_k \leq 2$ and $p < \frac{q_k}{2-q_k}$, for all $k$, every $f \in P^{n,p}(D,\bc)$ with representation
    \[
            f = \sum_{k = 0}^{n-1} \widehat{z^*}^k \varphi_k
    \]
    such that $\varphi_k^+ \in H^p_{w_k}(D)$, for each $k$, has a nontangential limit $f_{nt} \in L^p(\p D, \bc)$, and 
    \[
        \lim_{r \nearrow 1} \int_0^{2\pi} ||f_{nt}(e^{i\theta}) - f(re^{i\theta}) ||^p_{\bc} \, d\theta  = 0. 
    \]  
\end{theorem}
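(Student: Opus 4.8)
The plan is to reduce the claim to the coefficient functions $\varphi_k$ and invoke Theorem~\ref{bcholobvcon}. Since $f \in P^{n,p}(D,\bc)$ has the representation $f = \sum_{k=0}^{n-1}\widehat{z^*}^k\varphi_k$, Theorem~\ref{thm: polyhardyrep} gives $\varphi_k \in H^p(D,\bc)$ for each $k$. Writing $\varphi_k = p^+\varphi_k^+ + p^-\varphi_k^-$, the hypothesis that $\varphi_k^+ \in H^p_{w_k}(D)$ with $w_k \in L^{q_k}(D)$ and $q_k,p$ in the stated range places each $\varphi_k$ exactly under the hypotheses of Theorem~\ref{bcholobvcon}, so each $\varphi_k$ has a nontangential boundary value $(\varphi_k)_{nt} \in L^p(\p D,\bc)$ with
\[
\lim_{r\nearrow 1}\int_0^{2\pi}||(\varphi_k)_{nt}(e^{i\theta}) - \varphi_k(re^{i\theta})||_\bc^p\,d\theta = 0.
\]

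Next I would record the elementary facts about $\widehat{z^*}^k$. In idempotent form $\widehat{(re^{i\theta})^*}^k$ has components $r^k e^{ik\theta}$ and $r^k e^{-ik\theta}$, so $||\widehat{(re^{i\theta})^*}^k||_\bc = r^k \le 1$ uniformly in $r,\theta$, while $\widehat{(e^{i\theta})^*}^k - \widehat{(re^{i\theta})^*}^k$ has components $(1-r^k)e^{\pm ik\theta}$, hence $||\widehat{(e^{i\theta})^*}^k - \widehat{(re^{i\theta})^*}^k||_\bc = 1 - r^k \to 0$ uniformly in $\theta$; moreover $z \mapsto \widehat{z^*}$ is continuous on $\overline{D}$. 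From these facts and \eqref{stareqn} it follows that $f_{nt} := \sum_{k=0}^{n-1}\widehat{(e^{i\theta})^*}^k(\varphi_k)_{nt}$ lies in $L^p(\p D,\bc)$, and a pointwise argument---continuity of $\widehat{z^*}^k$ combined with the almost-everywhere nontangential convergence $\varphi_k(z)\to(\varphi_k)_{nt}$ guaranteed by Theorem~\ref{bcholobvcon}---shows that $f$ has nontangential boundary value $f_{nt}$ almost everywhere on $\p D$.

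For the $L^p$ convergence, for each $k$ I would use the splitting
\[
\widehat{(e^{i\theta})^*}^k(\varphi_k)_{nt}(e^{i\theta}) - \widehat{(re^{i\theta})^*}^k\varphi_k(re^{i\theta}) = \widehat{(e^{i\theta})^*}^k\big[(\varphi_k)_{nt}(e^{i\theta}) - \varphi_k(re^{i\theta})\big] + \big[\widehat{(e^{i\theta})^*}^k - \widehat{(re^{i\theta})^*}^k\big]\varphi_k(re^{i\theta}),
\]
apply the quasi-triangle inequality for $||\cdot||_\bc^p$ (with a constant $C_p$ depending only on $p$, which covers the range $0<p<1$) together with \eqref{stareqn}, and estimate the two pieces separately: the integral of the first is $\le 2^{p/2}\int_0^{2\pi}||(\varphi_k)_{nt}(e^{i\theta}) - \varphi_k(re^{i\theta})||_\bc^p\,d\theta \to 0$ by Theorem~\ref{bcholobvcon}, and the integral of the second is $\le 2^{p/2}(1-r^k)^p\int_0^{2\pi}||\varphi_k(re^{i\theta})||_\bc^p\,d\theta \le 2^{p/2}(1-r^k)^p||\varphi_k||_{H^p_\bc}^p \to 0$ since $\varphi_k \in H^p(D,\bc)$. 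Summing over the finitely many $k$ and applying the quasi-triangle inequality once more with a constant depending only on $p$ and $n$ yields
\[
\lim_{r\nearrow 1}\int_0^{2\pi}||f_{nt}(e^{i\theta}) - f(re^{i\theta})||_\bc^p\,d\theta = 0.
\]
The only mildly delicate point is the bookkeeping with the constants $\sqrt2$ from \eqref{stareqn} and $C_p$ from the quasi-triangle inequality when $0<p<1$; otherwise this is just the single-coefficient boundary-behavior result summed over a finite polynomial, in the same spirit as Theorems~\ref{thm: bcvekbdinsimprincase} and~\ref{bcvekhardybvfromidempotent}.
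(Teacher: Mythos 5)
Your proposal is correct and follows essentially the same route as the paper: reduce to the coefficients $\varphi_k$ via Theorem~\ref{bcholobvcon}, define $f_{nt}=\sum_k \widehat{(e^{i\theta})^*}^k\varphi_{k,nt}$, and use the identical two-term splitting of $f_{nt}-f$. The only (harmless) difference is that where the paper disposes of the second term $\bigl(\widehat{(e^{i\theta})^*}^k-\widehat{(re^{i\theta})^*}^k\bigr)\varphi_k(re^{i\theta})$ by Lebesgue's Dominated Convergence Theorem, you use the explicit uniform bound $2^{p/2}(1-r^k)^p\,||\varphi_k||_{H^p_\bc}^p\to 0$, which is a slightly cleaner quantitative version of the same step.
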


\begin{proof}
If $\varphi_k^+ \in H^p_{w_k}(D)$ and $w_k \in L^{q_k}(D)$, where $q_k>2$ or $1 < q_k \leq 2$ and $p < \frac{q_k}{2-q_k}$, then, by Theorem \ref{bcholobvcon}, $\varphi_k$ has a nontangential boundary value $\varphi_{k,nt} \in L^p(\p D, \bc)$ and 
\[
    \lim_{r \nearrow 1} \int_0^{2\pi} ||\varphi_{k,nt}(e^{i\theta}) - \varphi_k(re^{i\theta})||_{\bc}^p \,d\theta = 0.
\]
Since each $\varphi_k$ has a nontangential boundary value $\varphi_{k,nt}$ and $f = \sum_{k = 0}^{n-1} \widehat{z^*}^k \varphi_k$, it follows that $f_{nt}$ exists and is 
\[
    f_{nt} = \sum_{k = 0}^{n-1} \widehat{(e^{i\theta})^*}^k \varphi_{k,nt}.
\]
Observe that 
\begin{align*}
    \int_0^{2\pi} ||f_{nt}(e^{i\theta})||_{\bc}^p\,d\theta
    &= \int_0^{2\pi} \left|\left|\sum_{k = 0}^{n-1} \widehat{(e^{i\theta})^*}^k \varphi_{k,nt}(e^{i\theta})\right|\right|_{\bc}^p\,d\theta\\
    &\leq \sum_{k = 0}^{n-1} C_p \int_0^{2\pi} \left|\left|\widehat{(e^{i\theta})^*}^k \varphi_{k,nt}(e^{i\theta})\right|\right|_{\bc}^p\,d\theta \\
    &\leq \sum_{k = 0}^{n-1} C_p \, 2^{p/2}\int_0^{2\pi} \left|\left| \varphi_{k,nt}(e^{i\theta})\right|\right|_{\bc}^p\,d\theta < \infty,
\end{align*}
where $C_p$ is a constant that depends on only $p$ and $n$. Hence, $f_{nt} \in L^p(\p D, \bc)$. Now, for $r \in (0,1)$, we have
\begin{align*}
    &\int_0^{2\pi} \left|\left| f_{nt}(e^{i\theta}) - f(re^{i\theta})\right|\right|_{\bc}^p \,d\theta \\
    &= \int_0^{2\pi} \left|\left| \sum_{k = 0}^{n-1} \widehat{(e^{i\theta})^*}^k \varphi_{k,nt}(e^{i\theta}) - \sum_{k = 0}^{n-1} \widehat{(re^{i\theta})^*}^k \varphi_k(re^{i\theta}) \right|\right|_{\bc}^p \,d\theta\\
    &\leq C_p \int_0^{2\pi} \left|\left| \sum_{k = 0}^{n-1} \widehat{(e^{i\theta})^*}^k \varphi_{k,nt}(e^{i\theta}) - \sum_{k = 0}^{n-1} \widehat{(e^{i\theta})^*}^k \varphi_k(re^{i\theta}) \right|\right|_{\bc}^p \,d\theta\\
    &\quad\quad +  C_p\int_0^{2\pi} \left|\left| \sum_{k = 0}^{n-1} \widehat{(e^{i\theta})^*}^k \varphi_{k}(re^{i\theta}) - \sum_{k = 0}^{n-1} \widehat{(re^{i\theta})^*}^k \varphi_k(re^{i\theta}) \right|\right|_{\bc}^p \,d\theta \\
    &= C_p \sum_{k = 0}^{n-1}\int_0^{2\pi} \left|\left|  \widehat{(e^{i\theta})^*}^k (\varphi_{k,nt}(e^{i\theta}) -  \varphi_k(re^{i\theta}) )\right|\right|_{\bc}^p \,d\theta\\
    &\quad\quad +  C_p\sum_{k = 0}^{n-1}\int_0^{2\pi} \left|\left|  (\widehat{(e^{i\theta})^*}^k - \widehat{(re^{i\theta})^*}^k )\varphi_k(re^{i\theta}) \right|\right|_{\bc}^p \,d\theta \\
    &\leq C_p \sum_{k = 0}^{n-1}\int_0^{2\pi} \left|\left|  \varphi_{k,nt}(e^{i\theta}) -  \varphi_k(re^{i\theta} )\right|\right|_{\bc}^p \,d\theta +  C_p\sum_{k = 0}^{n-1}\int_0^{2\pi} \left|\left|  (\widehat{(e^{i\theta})^*}^k - \widehat{(re^{i\theta})^*}^k )\varphi_k(re^{i\theta}) \right|\right|_{\bc}^p \,d\theta,
\end{align*}
where $C_p$ is a constant that depends only on $p$ and is not necessarily the same from line to line. Observe that, for every $k$, 
\begin{align*}
(\widehat{(e^{i\theta})^*}^k - \widehat{(re^{i\theta})^*}^k )\varphi_k(re^{i\theta})
&= (1-r)^k e^{-jk\theta} (p^+ \varphi_k^+(re^{i\theta}) + p^- \varphi_k^-(re^{i\theta}))
\end{align*}
By Lebesgue's Dominated Convergence Theorem, 
\begin{align*}
&\lim_{r \nearrow 1}\int_0^{2\pi} \left|\left|  (\widehat{(e^{i\theta})^*}^k - \widehat{(re^{i\theta})^*}^k )\varphi_k(re^{i\theta}) \right|\right|_{\bc}^p \,d\theta \\
&= \lim_{r \nearrow 1}\int_0^{2\pi} \left|\left| (1-r)^k e^{-jk\theta} (p^+ \varphi_k^+(re^{i\theta}) + p^- \varphi_k^-(re^{i\theta})) \right|\right|_{\bc}^p \,d\theta\\
&\leq C_{p,k} \left( \lim_{r \nearrow 1}\int_0^{2\pi} | (1-r)^{k} |^p\, | \varphi_k^+(re^{i\theta})|^p \,d\theta + \lim_{r \nearrow 1}\int_0^{2\pi} | (1-r)^{k} |^p\,|\varphi_k^-(re^{i\theta})) |^p \,d\theta\right) = 0,
\end{align*}
for every $k$. Since 
\[
\lim_{r \nearrow 1}\int_0^{2\pi} \left|\left|  \varphi_{k,nt}(e^{i\theta}) -  \varphi_k(re^{i\theta} )\right|\right|_{\bc}^p \,d\theta = 0,
\]
for every $k$, by Theorem \ref{bcholobvcon}, and 
\[
 \lim_{r \nearrow 1} \int_0^{2\pi} \left|\left|  (\widehat{(e^{i\theta})^*}^k - \widehat{(re^{i\theta})^*}^k )\varphi_k(re^{i\theta}) \right|\right|_{\bc}^p \,d\theta = 0,
\]
where $C_{p,k}$ is a constant that depends on only $p$ and $k$, for every $k$, it follows that 
\begin{align*}
&\lim_{r \nearrow 1} C_p\left[ \sum_{k = 0}^{n-1}\int_0^{2\pi} \left|\left|  \varphi_{k,nt}(e^{i\theta}) -  \varphi_k(re^{i\theta} )\right|\right|_{\bc}^p \,d\theta +  \sum_{k = 0}^{n-1}\int_0^{2\pi} \left|\left|  (\widehat{(e^{i\theta})^*}^k - \widehat{(re^{i\theta})^*}^k )\varphi_k(re^{i\theta}) \right|\right|_{\bc}^p \,d\theta \right]\\
&= 0.
\end{align*}
Thus, 
\[
\lim_{r \nearrow 1} \int_0^{2\pi} \left|\left| f_{nt}(e^{i\theta}) - f(re^{i\theta})\right|\right|_{\bc}^p \,d\theta = 0.
\]

\end{proof}

\section{Bicomplex HOIV-Hardy Spaces}\label{BCHOIVHardyspaces}

In this final section, we combine the previous results concerning $\bc$-poly-Hardy and $\bc$-Vekua-Hardy classes to produce a class of functions that solve higher-order variants of the $\bc$-Vekua equation and exhibit many of the properties associated with a Hardy-type class of functions. This continues the work initiated by the author and B. B. Delgado in \cite{WBD} in the complex setting.

\subsection{Definitions}

\begin{deff}
    Let $0 < p < \infty$, $n$ a positive integer, and $A,B:D \to \bc$. We define the class of bicomplex-HOIV (\textit{higher-order iterated Vekua}) functions $V^n_{A,B}(D,\bc)$ to be those functions $f: D \to \bc$ that solve the bicomplex HOIV-equation
    \[
        (\dbar - A- BC(\cdot))^n f = 0,
    \]
    where $C(\cdot)$ is the operator that applies bicomplex conjugation.
\end{deff}

\begin{deff}
    Let $0 < p < \infty$, $n$ a positive integer, and $A,B \in W^{n-1,q}(D,\bc)$, $q>2$. We define the class of bicomplex HOIV-Hardy functions $V^{n,p}_{A,B}(D,\bc)$ to be those functions $f \in V^n_{A,B}(D,\bc)$ such that 
    \[
        \sum_{k = 0}^{n-1} ||(\dbar - A- BC(\cdot))^k f||_{H^p_{\bc}} < \infty.
    \]
\end{deff}

\subsection{Bicomplex Meta-Hardy Spaces}

    \subsubsection{Definition}

In \cite{WB3}, the author showed that functions which solve the iterated Vekua-type equation
\[
    \left( \frac{\p}{\p z^*} - A \right)^n w = 0
\]
exhibit similarities to the complex polyanalytic functions. This is the special case of the complex HOIV-equation where $B\equiv 0$. In \cite{WB3}, these functions were called meta-analytic functions to associate them with the previously studied special case where $A \in \C$, see \cite{metahardy}, or $A \in Hol(D)$, see \cite{itvek}. Here we show that the connection between polyanalytic and meta-analytic persists into the setting of $\bc$-valued functions.  

\begin{theorem}\label{Thm: bcmetarep}
    Let $n$ be a positive integer and $A \in W^{n-1,q}(D,\bc)$, $q>2$. Every $f \in V^{n}_{A,0}(D,\bc)$ is representable as 
    \[
        f = \sum_{k = 0}^{n-1} \widehat{z^*}^k \varphi_k,
    \]
    where $\varphi_k \in H_{A,0}(D,\bc)$, for each $k$. 
\end{theorem}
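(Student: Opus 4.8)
The plan is to argue by induction on $n$, mirroring the proof of Theorem~\ref{bcpolyrep} for the $\bc$-polyanalytic case (and following the complex meta-analytic argument of \cite{WB3}). Since $B \equiv 0$, the conjugation operator $C(\cdot)$ drops out of the defining equation, so $V^n_{A,0}(D,\bc)$ is precisely $\ker L^n$ for the first-order operator $L := \dbar - A$, while $H_{A,0}(D,\bc) = \ker L$. The mechanism driving the induction is a Leibniz-type identity: for $k \geq 1$ and $\varphi$ in the relevant function class,
\[
  L(\widehat{z^*}^k\varphi) = \dbar(\widehat{z^*}^k\varphi) - A\widehat{z^*}^k\varphi = k\widehat{z^*}^{k-1}\varphi + \widehat{z^*}^k(\dbar\varphi - A\varphi) = k\widehat{z^*}^{k-1}\varphi + \widehat{z^*}^k L\varphi,
\]
where I use $\dbar\,\widehat{z^*}^{\,k} = k\,\widehat{z^*}^{\,k-1}$ (recorded in Section~\ref{background}), the Leibniz rule for the derivation $\dbar$ on the commutative algebra of $\bc$-valued functions, and commutativity of bicomplex multiplication. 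In particular, if $\varphi \in H_{A,0}(D,\bc)$ then $L(\widehat{z^*}^k\varphi) = k\,\widehat{z^*}^{\,k-1}\varphi$.

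For the base case $n=1$ we have $V^1_{A,0}(D,\bc) = H_{A,0}(D,\bc)$, so $f = \widehat{z^*}^{\,0}\varphi_0$ with $\varphi_0 = f$. For the inductive step, assume the statement for all orders up to $m-1$ (with the weaker hypothesis $A \in W^{m-2,q}(D,\bc)$, which follows from $A \in W^{m-1,q}(D,\bc)$), and take $f \in V^m_{A,0}(D,\bc)$. Then $L^{m-1}(Lf) = L^m f = 0$, so $Lf \in V^{m-1}_{A,0}(D,\bc)$ and the inductive hypothesis gives $Lf = \sum_{k=0}^{m-2}\widehat{z^*}^{\,k}\psi_k$ with each $\psi_k \in H_{A,0}(D,\bc)$. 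Applying the identity above termwise,
\[
  L\!\left(\sum_{k=0}^{m-2}\tfrac{1}{k+1}\,\widehat{z^*}^{\,k+1}\psi_k\right) = \sum_{k=0}^{m-2}\widehat{z^*}^{\,k}\psi_k = Lf,
\]
so $g := f - \sum_{k=0}^{m-2}\tfrac{1}{k+1}\,\widehat{z^*}^{\,k+1}\psi_k$ lies in $\ker L = H_{A,0}(D,\bc)$. Setting $\varphi_0 := g$ and $\varphi_{k+1} := \tfrac{1}{k+1}\psi_k$ for $0 \leq k \leq m-2$ and reindexing yields $f = \sum_{j=0}^{m-1}\widehat{z^*}^{\,j}\varphi_j$ with each $\varphi_j \in H_{A,0}(D,\bc)$, completing the induction.

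The step that needs genuine care — and the main obstacle — is justifying that $L$ may be applied iteratively to $f$ and that the Leibniz computation and the telescoping identity are legitimate for the intermediate functions $L^k f$ and for the products $\widehat{z^*}^{\,k+1}\psi_k$. This is exactly what the hypothesis $A \in W^{n-1,q}(D,\bc)$, $q>2$, secures: by Sobolev embedding $W^{n-1,q}(D) \hookrightarrow C^{n-2,\alpha}(\overline{D})$ with $\alpha = (q-2)/q$, so $A$ is smooth enough that one can bootstrap the regularity of $f$ through the equations $\dbar(L^k f) = A\,L^k f + L^{k+1}f$ — or, concretely, describe each $L^k f$ via the similarity-principle decomposition of Theorem~\ref{thm: bcvekbzerorep} together with $\dbar\,T_\bc(A) = A$ from Theorem~\ref{Thm: bctoperator} — and thereby make sense of all the operations, either classically on the appropriate Sobolev/Hölder scale or in the sense of distributions on $D$, where every identity above holds since $\widehat{z^*}^{\,k} \in C^\infty(\overline{D},\bc)$. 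A short regularity lemma to this effect, or the remark that the chain of equalities is read distributionally, is all that is needed to make the argument rigorous; the algebraic heart of the proof is the three displayed identities.
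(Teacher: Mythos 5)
Your proof is correct and follows essentially the same route as the paper's: induction on $n$, with the Leibniz identity $L(\widehat{z^*}^{\,k+1}\varphi) = (k+1)\widehat{z^*}^{\,k}\varphi + \widehat{z^*}^{\,k+1}L\varphi$ driving the telescoping step that shows $f - \sum_k \tfrac{1}{k+1}\widehat{z^*}^{\,k+1}\psi_k \in \ker(\dbar - A)$, followed by reindexing. Your closing remarks on regularity address a point the paper passes over silently, but the algebraic core of the two arguments is identical.
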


\begin{proof}
Observe that functions of the form $f(z) = \sum_{k = 0}^{n-1} \widehat{z^*}^k \varphi_k(z)$, where $\varphi_k \in H_{A,0}(D,\bc)$, for every $k$, are elements of $V^n_{A,0}(D,\bc)$ by direct computation. 

We prove the other direction by induction. The result is clear for $n = 1$. Suppose that, for every $n$ satisfying $1 \leq n \leq m-1$, that if $f \in V^n_{A,0}(D, \bc)$, then there exists a collection $\{\varphi_k\}_{k=0}^{n-1}$ such that $\varphi_k\in H_{A,0}(D,
\bc)$, for every $k$, and
\[
    f = \sum_{k = 0}^{n-1} \widehat{z^*}^k \varphi_k
\]
Let $g \in V^m_{A,0}(D, \bc)$. So, $(\dbar-A) g \in V^{m-1}_{A,0}(D, \bc)$, and there exists a collection $\{\varphi_k\}_{k=0}^{m-2}$ such that $\varphi_k \in H_{A,0}(D,\bc)$, for every $k$, and 
\[
    (\dbar-A) g = \sum_{k=0}^{m-2} \widehat{z^*}^k \varphi_k.
\]
Observe that 
\begin{align*}
    &(\dbar-A)\left( g - \sum_{k=0}^{m-2} \frac{1}{k+1}\widehat{z^*} ^{k+1} \varphi_k\right) \\  
    &= \left(\dbar -A\right)g - \dbar \left( \sum_{k=0}^{m-2} \frac{1}{k+1}\widehat{z^*} ^{k+1} \varphi_k\right) + A  \left( \sum_{k=0}^{m-2} \frac{1}{k+1}\widehat{z^*} ^{k+1} \varphi_k\right)\\
    &= \sum_{k=0}^{m-2} \widehat{z^*}^k \varphi_k - \sum_{k=0}^{m-2} \widehat{z^*}^k \varphi_k  - \sum_{k=0}^{m-2} \frac{1}{k+1}\widehat{z^*} ^{k+1} A\varphi_k +    \sum_{k=0}^{m-2} \frac{1}{k+1}\widehat{z^*} ^{k+1} A\varphi_k \\
    &= 0.
\end{align*}
Thus, $g - \sum_{k=0}^{m-2} \frac{1}{k+1}\widehat{z^*} ^{k+1} \varphi_k \in H_{A,0}(D,\bc)$. Let 
\[
\gamma = g - \sum_{k=0}^{m-2} \frac{1}{k+1}\widehat{z^*} ^{k+1} \varphi_k.
\]
Let $\phi_0 := \gamma$ and $\phi_{k+1} := \frac{1}{k+1} \varphi_k$, for every $0 \leq k \leq m-2$. Then we rearrange to have
\[
    g = \sum_{j = 0}^{m-1} \widehat{z^*}^j \phi_j.
\]

\end{proof}

    \subsubsection{Representation}

    In \cite{WB3}, the author showed that functions in Hardy spaces of meta-analytic function are not only representable as a polynomial in $z^*$ with generalized analytic function coefficients but that the coefficients are elements of the corresponding Vekua-Hardy space. This behavior persists for bicomplex-valued functions.

    \begin{theorem}\label{Thm: bcmetahardyrep}
         For $0 < p < \infty$, $n$ a positive integer, and $A \in W^{n-1, q}(D,\bc)$, $q>2$, a function $f$ is in $V^{n,p}_{A,0}(D,\bc)$ with representation 
        \[
            f = \sum_{k = 0}^{n-1}  \widehat{z^*}^k \varphi_k
        \]
        if and only if $\varphi_k \in H^p_{A,0}(D,\bc)$, for every $k$.
    \end{theorem}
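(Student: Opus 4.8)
The plan is to follow the template of Theorem~\ref{thm: polyhardyrep}, with the operator $\dbar$ replaced by $\dbar - A$, the space $H^p(D,\bc)$ replaced by $H^p_{A,0}(D,\bc)$, and Theorem~\ref{Thm: bcmetarep} playing the role that Theorem~\ref{bcpolyrep} played there. The one algebraic fact driving everything is that since $\bc$ is commutative and $\dbar\,\widehat{z^*}^{\,k} = k\,\widehat{z^*}^{\,k-1}$, for any $\psi : D \to \bc$ one has $(\dbar - A)\big(\widehat{z^*}^{\,j}\psi\big) = \widehat{z^*}^{\,j}(\dbar - A)\psi + j\,\widehat{z^*}^{\,j-1}\psi$; in particular, if $\varphi \in H_{A,0}(D,\bc)$ then $(\dbar - A)\big(\widehat{z^*}^{\,k}\varphi\big) = k\,\widehat{z^*}^{\,k-1}\varphi$. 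Iterating, for $f = \sum_{k=0}^{n-1}\widehat{z^*}^{\,k}\varphi_k$ with each $\varphi_k \in H_{A,0}(D,\bc)$ one gets
\[
(\dbar - A)^{\ell}f \;=\; \sum_{k=\ell}^{n-1}\frac{k!}{(k-\ell)!}\,\widehat{z^*}^{\,k-\ell}\varphi_k, \qquad 0 \le \ell \le n-1,
\]
and $(\dbar - A)^{n}f = 0$, so $f \in V^{n}_{A,0}(D,\bc)$.

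For the ``if'' direction, assume $\varphi_k \in H^p_{A,0}(D,\bc)$ for every $k$. A one-line computation via Proposition~\ref{everybchasplusandminus} gives $\|\widehat{(re^{i\theta})^*}\|_\bc = r \le 1$, so by \eqref{stareqn} each $\|\widehat{(re^{i\theta})^*}^{\,m}\|_\bc$ is bounded on $\overline{D}$ by a constant depending only on $m$. Using the displayed identity for $(\dbar-A)^\ell f$ together with the quasi-triangle inequality for $\|\cdot\|_\bc^p$ (absorbing a constant $C_{p,n}$ when $0<p<1$),
\[
\int_0^{2\pi}\|(\dbar-A)^{\ell}f(re^{i\theta})\|_\bc^{p}\,d\theta \;\le\; C_{p,n}\sum_{k=\ell}^{n-1}\int_0^{2\pi}\|\varphi_k(re^{i\theta})\|_\bc^{p}\,d\theta \;\le\; C_{p,n}\sum_{k=\ell}^{n-1}\|\varphi_k\|_{H^p_\bc}^{p},
\]
which is finite and independent of $r$. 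Summing over $\ell$ gives $\sum_{\ell=0}^{n-1}\|(\dbar-A)^{\ell}f\|_{H^p_\bc} < \infty$, hence $f \in V^{n,p}_{A,0}(D,\bc)$.

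For the ``only if'' direction, let $f \in V^{n,p}_{A,0}(D,\bc)$. By Theorem~\ref{Thm: bcmetarep} there are $\varphi_k \in H_{A,0}(D,\bc)$ with $f = \sum_{k=0}^{n-1}\widehat{z^*}^{\,k}\varphi_k$, so it remains only to bound $\|\varphi_k\|_{H^p_\bc}$. The relation above is triangular in the $\varphi_k$, so it can be inverted explicitly; as in the polyanalytic case (cf.\ the formula for $\varphi_k$ in the proof of Theorem~\ref{thm: polyhardyrep} and \cite{l2poly, itvek, itvekbvp, WBD}), using $(\dbar - A)\big(\widehat{z^*}^{\,j}\psi\big) = \widehat{z^*}^{\,j}(\dbar-A)\psi + j\,\widehat{z^*}^{\,j-1}\psi$ one checks
\[
\varphi_k \;=\; \frac{1}{k!}\sum_{j=0}^{n-1-k}\frac{(-1)^{j}}{j!}\,\widehat{z^*}^{\,j}\,(\dbar - A)^{k+j}f .
\]
Estimating exactly as before, $\int_0^{2\pi}\|\varphi_k(re^{i\theta})\|_\bc^{p}\,d\theta \le C_{p,n}\sum_{j=0}^{n-1-k}\|(\dbar-A)^{k+j}f\|_{H^p_\bc}^{p} < \infty$, uniformly in $r$, so $\|\varphi_k\|_{H^p_\bc}<\infty$; combined with $\varphi_k \in H_{A,0}(D,\bc)$ this yields $\varphi_k \in H^p_{A,0}(D,\bc)$. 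I expect the only point genuinely needing care to be the verification of the explicit inversion formula for $\varphi_k$ (equivalently, solving the triangular system with the stated coefficients); one can either check it directly from the commutation identity above or bypass it entirely by running an induction on $n$ parallel to the proof of Theorem~\ref{Thm: bcmetarep}. Everything else is a routine repetition of the norm estimates already used for the $\bc$-poly-Hardy and $\bc$-Vekua-Hardy spaces.
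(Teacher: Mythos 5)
Your proposal is correct and follows essentially the same route as the paper: obtain the representation from Theorem \ref{Thm: bcmetarep}, use the explicit inversion formula $\varphi_k = \frac{1}{k!}\sum_{j=0}^{n-1-k}\frac{(-1)^j}{j!}\widehat{z^*}^{\,j}(\dbar-A)^{k+j}f$ for one direction and the triangular expansion $(\dbar-A)^{\ell}f=\sum_{k\ge \ell}\frac{k!}{(k-\ell)!}\widehat{z^*}^{\,k-\ell}\varphi_k$ for the other, then conclude with the boundedness of $\|\widehat{z^*}^{\,m}\|_\bc$ on $\overline{D}$ and the quasi-triangle inequality. The only cosmetic difference is that you spell out the commutation identity behind these formulas, whereas the paper cites the complex-setting references for them.
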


    \begin{proof}
    First, suppose that $f \in V^{n,p}_{A,0}(D,\bc)$. By Theorem \ref{Thm: bcmetarep}, there exists $\varphi_{k} \in H_{A,0}(D,\bc)$ such that
    \[
        f = \sum_{k = 0}^{n-1}  \widehat{z^*}^k \varphi_k.
    \]
    Observe that 
    \[
        \varphi_k = \frac{1}{k!}\sum_{j=0}^{n-1-k} \frac{(-1)^{j}}{j!} \widehat{z^*}^j (\dbar - A)^{k+j}f,
    \]
    for every $k$. For this formula in the complex setting, see Remark 3.2 of \cite{WBD} (or Theorem 1 of \cite{itvek} or Theorem 4.1 of \cite{itvekbvp}). Now, we have
    \begin{align*}
        &\sup_{0< r < 1} \int_0^{2\pi} ||\varphi_k(re^{i\theta})||_{\bc}^p\,d\theta\\
        &= \sup_{0< r < 1} \int_0^{2\pi} \left|\left|\frac{1}{k!}\sum_{j=0}^{n-1-k} \frac{(-1)^{j}}{j!} \widehat{(re^{i\theta})^*}^j (\dbar - A)^{k+j}f(re^{i\theta})\right|\right|_{\bc}^p\,d\theta \\
        &\leq C_{n,p} \sum_{j=0}^{n-1-k}\sup_{0< r < 1} \int_0^{2\pi} \left|\left|(\dbar - A)^{k+j}f(re^{i\theta})\right|\right|_{\bc}^p\,d\theta< \infty,
    \end{align*}
    where $C_{n,p}$ is a constant that depends only on $n$ and $p$. Hence, $\varphi_k \in H^p_{A,0}(D)$. 

    Next, suppose that $\varphi_k \in H^p_{A,0}(D)$, for every $k$. Since 
    \[
        (\dbar - A)^k f = \sum_{j=k}^{n-1} C_{kj} \widehat{z^*}^{j-k} \varphi_j,
    \]
    for every $0 \leq k \leq n-1$, where $C_{kj}= k!$$ {j}\choose{j-k}$, it follows that 
    \begin{align*}
        &\sup_{0 < r < 1} \int_0^{2\pi} ||(\dbar - A)^k f(re^{i\theta})||_{\bc}^p \,d\theta \\
        &= \sup_{0 < r < 1} \int_0^{2\pi} \left|\left|\sum_{j=k}^{n-1} C_{kj} \widehat{(re^{i\theta})^*}^{j-k} \varphi_j(re^{i\theta})\right|\right|_{\bc}^p \,d\theta \\
        &\leq C_{n,p} \sum_{j=k}^{n-1} \sup_{0 < r < 1} \int_0^{2\pi} \left|\left|\varphi_j(re^{i\theta})\right|\right|_{\bc}^p \,d\theta < \infty,
    \end{align*}
    where $C_{n,p}$ is a constant that depends on only $n$ and $p$. 

    \end{proof}

    Using the representation above, we extend Theorem \ref{inBCVekHardypinLebm} to the bicomplex-meta-Hardy spaces. 

\begin{theorem}
Let $p$ be a positive real number, $n$ a positive integer, and $A\in W^{n-1,q}(D)$, $q>2$. Every function $w \in V^{n,p}_{A,0}(D,\bc)$ is an element of $L^m(D,\bc)$, for every $0 < m < 2p$. 
\end{theorem}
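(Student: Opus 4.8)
The plan is to combine the representation theorem for bicomplex-meta-Hardy functions with the corresponding Lebesgue-inclusion result for the bicomplex Vekua-Hardy spaces. First I would note that $W^{n-1,q}(D,\bc) \subset L^q(D,\bc)$, so the hypothesis $A \in L^q(D)$, $q>2$, of Theorem \ref{inBCVekHardypinLebm} is satisfied, and then invoke Theorem \ref{Thm: bcmetahardyrep} to write every $w \in V^{n,p}_{A,0}(D,\bc)$ as
\[
    w = \sum_{k=0}^{n-1} \widehat{z^*}^k \varphi_k,
\]
with each $\varphi_k \in H^p_{A,0}(D,\bc)$. Theorem \ref{inBCVekHardypinLebm} then gives $\varphi_k \in L^m(D,\bc)$ for every $k$ and every $0 < m < 2p$.

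Next I would observe that $\widehat{z^*}^k$ is bounded in bicomplex norm on $\overline{D}$: the idempotent components of $\widehat{z^*}$ are $(\widehat{z^*})^{\pm} = x \pm i y$, each of modulus $|z| \le 1$, so by \eqref{bcbasicestimates} $\|\widehat{z^*}\|_\bc \le 1$ on $\overline{D}$, whence by iterating \eqref{stareqn} there is a constant $M_k$ (depending only on $k$) with $\|\widehat{z^*}^k\|_\bc \le M_k$ for all $z \in \overline{D}$. Combining this with the elementary inequality $\left\| \sum_{k=0}^{n-1} a_k \right\|_\bc^m \le C_{n,m} \sum_{k=0}^{n-1} \|a_k\|_\bc^m$, valid for all $m>0$ with $C_{n,m}$ depending only on $n$ and $m$, together with \eqref{stareqn} once more, I would estimate
\begin{align*}
    \iint_D \|w(z)\|_\bc^m \, dx\, dy
    &= \iint_D \left\| \sum_{k=0}^{n-1} \widehat{z^*}^k \varphi_k(z) \right\|_\bc^m \, dx\, dy \\
    &\le C_{n,m} \sum_{k=0}^{n-1} \iint_D \left\| \widehat{z^*}^k \varphi_k(z) \right\|_\bc^m \, dx\, dy \\
    &\le C_{n,m}\, 2^{m/2} \sum_{k=0}^{n-1} M_k^m \iint_D \| \varphi_k(z) \|_\bc^m \, dx\, dy < \infty,
\end{align*}
using in the last step that each $\varphi_k \in L^m(D,\bc)$. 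Hence $w \in L^m(D,\bc)$ for every $0 < m < 2p$, which is the claim.

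I do not expect a genuine obstacle here; the argument is a direct parallel of the proof of the previous theorem (the $\bc$-meta-Hardy analogue), now leaning on Theorem \ref{inBCVekHardypinLebm} instead of Proposition \ref{Prop: bchardymlessthan2p}. The only point that warrants a sentence of care is matching hypotheses — verifying that membership of $A$ in the Sobolev space $W^{n-1,q}(D,\bc)$ with $q>2$ is enough to apply Theorem \ref{inBCVekHardypinLebm}, which it is since $W^{n-1,q}(D,\bc)\hookrightarrow L^q(D,\bc)$ (and, for $n-1\ge 1$, the Sobolev embedding in the plane even gives $A \in C^{0,\alpha}(\overline{D},\bc)$, though this stronger fact is not needed).
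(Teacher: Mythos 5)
Your proposal is correct and follows essentially the same route as the paper: apply Theorem \ref{Thm: bcmetahardyrep} to get the representation $w = \sum_{k=0}^{n-1} \widehat{z^*}^k \varphi_k$ with $\varphi_k \in H^p_{A,0}(D,\bc)$, then Theorem \ref{inBCVekHardypinLebm} to put each $\varphi_k$ in $L^m(D,\bc)$, and finally the quasi-triangle inequality together with the boundedness of $\widehat{z^*}^k$ on $\overline{D}$ to conclude. Your explicit verification that $\|\widehat{z^*}\|_\bc \le 1$ via the idempotent components and the remark on the embedding $W^{n-1,q}(D,\bc)\subset L^q(D,\bc)$ are details the paper leaves implicit, but the argument is the same.
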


\begin{proof}
 By Theorem \ref{Thm: bcmetahardyrep}, every $f \in V^{n,p}_{A,0}(D,\bc)$ is representable as $f = \sum_{k = 0}^{n-1}  \widehat{z^*}^k f_k$, where $f_k \in H^p_{A,0}(D,\bc)$. By Theorem \ref{inBCVekHardypinLebm}, $f_k \in L^m(D,\bc)$, for every $0 < m < 2p$ and every $k$. Hence, we have
 \begin{align*}
    \iint_D ||f(z)||_{\bc}^m \,dx\,dy 
    &= \iint_D \left|\left|\sum_{k = 0}^{n-1} \widehat{z^*}^k f_k(z)\right|\right|_{\bc}^m \,dx\,dy \\
    &\leq C\sum_{k = 0}^{n-1}\iint_D \left|\left|  f_k(z)\right|\right|_{\bc}^m \,dx\,dy < \infty,
 \end{align*}
 where $C$ is a constant that depends on $n$ and $m$. Therefore, $f \in L^m(D,\bc)$. 
\end{proof}

    \subsubsection{Boundary Behavior}

    Next, we show that $\bc$-meta-Hardy classes also share the boundary behavior associated with Hardy classes.

    \begin{theorem}
        For $0 < p < \infty$, $n$ a positive integer, and $A \in W^{n-1, q}(D,\bc)$, $q>2$, every $f =\sum_{k=0}^{n-1} \widehat{z^*}^k \varphi_k \in V^{n,p}_{A,0}(D,\bc)$ such that $\varphi^+_k \in H^p_{w_k}(D)$, for $w_k \in L^{q_k}(D)$ with $q_k>2$ or $1 < q_k \leq 2$ and $p< \frac{q_k}{2-q_k}$, for every $k$, has a nontangential limit $f_{nt} \in L^p(D, \bc)$, and 
        \[
            \lim_{r \nearrow 1} \int_0^{2\pi} ||f_{nt}(e^{i\theta}) - f(re^{i\theta}) ||^p_{\bc} \, d\theta  = 0. 
        \]  
    \end{theorem}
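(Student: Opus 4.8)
The plan is to reduce to the $\bc$-Vekua-Hardy case one coefficient at a time, mirroring the boundary-behavior argument for the $\bc$-polyanalytic Hardy spaces. First I would note that $A \in W^{n-1,q}(D,\bc) \subseteq L^q(D,\bc)$ with $q>2$ (since $n\geq 1$), so Theorems~\ref{Thm: bcmetahardyrep} and~\ref{bcvekhardybvfromidempotent} are available. By Theorem~\ref{Thm: bcmetahardyrep}, the hypothesis $f = \sum_{k=0}^{n-1}\widehat{z^*}^k\varphi_k \in V^{n,p}_{A,0}(D,\bc)$ forces $\varphi_k \in H^p_{A,0}(D,\bc)$ for every $k$. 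Since moreover $\varphi_k^+ \in H^p_{w_k}(D)$ with $w_k\in L^{q_k}(D)$ satisfying the stated compatibility with $p$, Theorem~\ref{bcvekhardybvfromidempotent} (taken with $B\equiv 0$) will yield a nontangential boundary value $\varphi_{k,nt}\in L^p(\p D,\bc)$ of each $\varphi_k$ with
\[
\lim_{r\nearrow 1}\int_0^{2\pi}||\varphi_{k,nt}(e^{i\theta}) - \varphi_k(re^{i\theta})||_\bc^p\,d\theta = 0 .
\]

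Next I would observe that $\widehat{(re^{i\theta})^*}^k = r^k e^{-jk\theta}$, so each $\widehat{z^*}^k$ extends continuously to $\overline{D}$, is bounded by $1$ in $||\cdot||_\bc$ there, and has boundary values $\widehat{(e^{i\theta})^*}^k = e^{-jk\theta}$ with $||e^{-jk\theta}||_\bc = 1$. Consequently the nontangential limit of $f$ exists and equals $f_{nt} := \sum_{k=0}^{n-1}\widehat{(e^{i\theta})^*}^k\varphi_{k,nt}$, which lies in $L^p(\p D,\bc)$ by \eqref{stareqn}, the triangle inequality, and the finiteness of the sum. For the $L^p$-convergence I would split, for $r\in(0,1)$,
\[
f_{nt}(e^{i\theta}) - f(re^{i\theta}) = \sum_{k=0}^{n-1}\widehat{(e^{i\theta})^*}^k\left(\varphi_{k,nt}(e^{i\theta}) - \varphi_k(re^{i\theta})\right) + \sum_{k=0}^{n-1}(1-r^k)e^{-jk\theta}\varphi_k(re^{i\theta}) .
\]
Raising $||\cdot||_\bc$ to the $p$-th power, using the elementary inequality $(\sum_{k}|a_k|)^p \le C_{n,p}\sum_k|a_k|^p$, and then \eqref{stareqn} together with $||e^{-jk\theta}||_\bc=1$, the integral of the first sum is dominated by $C_{n,p}\sum_k\int_0^{2\pi}||\varphi_{k,nt}(e^{i\theta}) - \varphi_k(re^{i\theta})||_\bc^p\,d\theta$, which tends to $0$ by the displayed limit; for the second sum each term satisfies $\int_0^{2\pi}||(1-r^k)e^{-jk\theta}\varphi_k(re^{i\theta})||_\bc^p\,d\theta \le 2^{p/2}(1-r^k)^p\,||\varphi_k||_{H^p_\bc}^p$, which tends to $0$ as $r\nearrow 1$ because $\varphi_k \in H^p_{A,0}(D,\bc)$ has finite $H^p_\bc$-norm. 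Adding the two estimates gives the claimed limit.

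The computations here are routine. The one step I would check carefully is that the hypothesis $\varphi_k^+\in H^p_{w_k}(D)$ on the idempotent component of the $\bc$-Vekua-Hardy coefficient $\varphi_k$ is precisely the input required by Theorem~\ref{bcvekhardybvfromidempotent}, so that the coefficientwise reduction is legitimate; everything after that is bookkeeping of finitely many summands together with the vanishing of $1-r^k$ as $r\nearrow 1$.
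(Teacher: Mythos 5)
Your proposal is correct and follows essentially the same route as the paper: reduce via Theorem~\ref{Thm: bcmetahardyrep} to coefficients $\varphi_k \in H^p_{A,0}(D,\bc)$, apply Theorem~\ref{bcvekhardybvfromidempotent} (with $B\equiv 0$) coefficientwise, and split $f_{nt}-f$ into the same two sums. Your handling of the second sum (pulling out $(1-r^k)^p$ and bounding by the finite $H^p_\bc$-norm) is a slight streamlining of the paper's appeal to dominated convergence, but it is the same argument in substance.
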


    \begin{proof}
        Let $f \in V^{n,p}_{A,0}(D,\bc)$. By Theorem \ref{Thm: bcmetahardyrep}, 
        \[
            f = \sum_{k = 0}^{n-1}  \widehat{z^*}^k \varphi_k,
        \]
        where $\varphi_k \in H^p_{A,0}(D,\bc)$, for every $k$. If $\varphi_k^+ \in H^p_{w_k}(D)$, where $w_k \in L^{q_k}(D)$ with $q_k>2$ or $1 < q_k \leq 2$ and $p< \frac{q_k}{2-q_k}$, for every $k$, it follows that $\varphi_{k,nt}$ exists, is in $L^p(\p D)$, and 
        \begin{equation}\label{convfork}
            \lim_{r \nearrow 1} \int_0^{2\pi} ||\varphi_{k,nt}(e^{i\theta}) - \varphi_k(re^{i\theta}) ||^p_{\bc} \, d\theta  = 0. 
        \end{equation}
        Since $f = \sum_{k = 0}^{n-1}  \widehat{z^*}^k \varphi_k$, it follows that 
        \[
            f_{nt} = \sum_{k = 0}^{n-1}  \widehat{(e^{i\theta})^*}^k \varphi_{k,nt}
        \]
        exists, and, for $r \in (0,1)$, we have
        \begin{align*}
            &\int_0^{2\pi} ||f_{nt}(e^{i\theta}) - f(re^{i\theta}) ||^p_{\bc} \, d\theta \\
            &= \int_0^{2\pi} ||\sum_{k = 0}^{n-1}  \widehat{(e^{i\theta})^*}^k \varphi_{k,nt}(e^{i\theta}) - \sum_{k = 0}^{n-1}  \widehat{(re^{i\theta})^*}^k \varphi_k(re^{i\theta}) ||^p_{\bc} \, d\theta \\
            &\leq C_p \int_0^{2\pi} ||\sum_{k = 0}^{n-1}  \widehat{(e^{i\theta})^*}^k \varphi_{k,nt}(e^{i\theta}) - \sum_{k = 0}^{n-1}  \widehat{(e^{i\theta})^*}^k \varphi_k(re^{i\theta}) ||^p_{\bc} \, d\theta \\
            &\quad\quad + C_p\int_0^{2\pi} ||\sum_{k = 0}^{n-1}  \widehat{(e^{i\theta})^*}^k \varphi_k(re^{i\theta}) - \sum_{k = 0}^{n-1}  \widehat{(re^{i\theta})^*}^k \varphi_k(re^{i\theta}) ||^p_{\bc} \, d\theta \\
            &\leq C_{p,n} \sum_{k = 0}^{n-1} \int_0^{2\pi} ||  \widehat{(e^{i\theta})^*}^k (\varphi_{k,nt}(e^{i\theta}) - \varphi_k(re^{i\theta})) ||^p_{\bc} \, d\theta \\
            &\quad\quad + C_{p,n} \sum_{k = 0}^{n-1} \int_0^{2\pi}  ||  (\widehat{(e^{i\theta})^*}^k -  \widehat{(re^{i\theta})^*}^k )\varphi_k(re^{i\theta}) ||^p_{\bc} \, d\theta \\
            &\leq \widetilde{C_{p,n}} \sum_{k = 0}^{n-1} \int_0^{2\pi} ||  \varphi_{k,nt}(e^{i\theta}) - \varphi_k(re^{i\theta}) ||^p_{\bc} \, d\theta \\
            &\quad\quad + C_{p,n} \sum_{k = 0}^{n-1} \int_0^{2\pi}  ||  (\widehat{(e^{i\theta})^*}^k -  \widehat{(re^{i\theta})^*}^k )\varphi_k(re^{i\theta}) ||^p_{\bc} \, d\theta ,
        \end{align*}
        where $C_{p,n}$ and $\widetilde{C_{p,n}}$ are constants that depend on only $p$ and $n$. Observe that 
        \begin{align*}
            & (\widehat{(e^{i\theta})^*}^k -  \widehat{(re^{i\theta})^*}^k )\varphi_k(re^{i\theta}) \\
            &= (e^{-jk\theta} - r^k e^{-jk\theta}) (p^+ \varphi_k^+(re^{i\theta}) + p^- \varphi_k^-(re^{i\theta})) \\
            &= (p^+ e^{ik\theta} + p^- e^{-ik\theta} - r^k (p^+ e^{ik\theta} + p^- e^{-ik\theta})) (p^+ \varphi_k^+(re^{i\theta}) + p^- \varphi_k^-(re^{i\theta})) \\
            &= (p^+ (1-r^k)e^{ik\theta} + p^- (1-r^k)e^{-ik\theta} ) (p^+ \varphi_k^+(re^{i\theta}) + p^- \varphi_k^-(re^{i\theta})) \\
            &= p^+ (1-r^k)e^{ik\theta}\varphi_k^+(re^{i\theta}) + p^- (1-r^k)e^{-ik\theta} \varphi_k^-(re^{i\theta}) .
        \end{align*}

        So, 
        \begin{align*}
&           \widetilde{C_{p,n}} \sum_{k = 0}^{n-1} \int_0^{2\pi}                ||  \varphi_{k,nt}(e^{i\theta}) -                                   \varphi_k(re^{i\theta}) ||^p_{\bc} \, d\theta \\
            &\quad\quad + C_{p,n} \sum_{k = 0}^{n-1} \int_0^{2\pi}  ||  (\widehat{(e^{i\theta})^*}^k -  \widehat{(re^{i\theta})^*}^k )\varphi_k(re^{i\theta}) ||^p_{\bc} \, d\theta\\
            &= \widetilde{C_{p,n}} \sum_{k = 0}^{n-1} \int_0^{2\pi} ||  \varphi_{k,nt}(e^{i\theta}) - \varphi_k(re^{i\theta}) ||^p_{\bc} \, d\theta \\
            &\quad\quad + C_{p,n} \sum_{k = 0}^{n-1} \int_0^{2\pi}  || p^+ (1-r^k)e^{ik\theta}\varphi_k^+(re^{i\theta}) + p^- (1-r^k)e^{-ik\theta} \varphi_k^-(re^{i\theta}) ||^p_{\bc} \, d\theta\\
            &\leq \widetilde{C_{p,n}} \sum_{k = 0}^{n-1} \int_0^{2\pi} ||  \varphi_{k,nt}(e^{i\theta}) - \varphi_k(re^{i\theta}) ||^p_{\bc} \, d\theta \\
            &\quad\quad + C_{p,n} 2^{-p/2}\sum_{k = 0}^{n-1} \int_0^{2\pi}  \left( |(1-r^k)e^{ik\theta}\varphi_k^+(re^{i\theta})| + |(1-r^k)e^{-ik\theta} \varphi_k^-(re^{i\theta}) | \right)^p \, d\theta\\
            &\leq \widetilde{C_{p,n}} \sum_{k = 0}^{n-1} \int_0^{2\pi} ||  \varphi_{k,nt}(e^{i\theta}) - \varphi_k(re^{i\theta}) ||^p_{\bc} \, d\theta \\
            &\quad\quad + \widetilde{\widetilde{C_{p,n}}}\sum_{k = 0}^{n-1} \int_0^{2\pi}  |(1-r^k)|^p\, |\varphi_k^+(re^{i\theta})|^p\,d\theta \\
            &\quad\quad + \widetilde{\widetilde{C_{p,n}}}\sum_{k = 0}^{n-1} \int_0^{2\pi} |(1-r^k)|^p \, | \varphi_k^-(re^{i\theta}) |^p  \, d\theta ,
        \end{align*}
        where $C_{p,n}$, $\widetilde{C_{p,n}}$, and $\widetilde{\widetilde{C_{p,n}}}$ are constants that depend on only $p$ and $n$. Observe that, as $r \nearrow 1$, the first sum in the last line of the inequality converges to zero because of (\ref{convfork}), and the second and third sums in the last line of the inequality converge to zero by Lebesgue's Dominated Convergence Theorem. Therefore, 
        \[
            \lim_{r \nearrow 1} \int_0^{2\pi} ||f_{nt}(e^{i\theta}) - f(re^{i\theta}) ||^p_{\bc} \, d\theta  = 0. 
        \]  
    \end{proof}

We also show that the result of Theorem \ref{bcvekhardybdinsfuncins} extends to the bicomplex-meta-Hardy spaces. This is a bicomplexification of Corollary 5.2 in \cite{WBD}.

\begin{theorem}
    For $0 < p < \infty$, $n$ a positive integer, and $A \in W^{n-1, q}(D,\bc)$, $q>2$, every function $f \in V^{n,p}_{A,0}(D,\bc)$ with representation 
        \[
            f = \sum_{k = 0}^{n-1}  \widehat{z^*}^k f_k
        \]
        such that $f_{k,nt} \in L^{s_k}(\p D, \bc)$, where $s_k > p$, for every $k$, is in $V^{n,s}_{A,0}(D,\bc)$, where $s:=\min_{k}\{s_k\}$.
\end{theorem}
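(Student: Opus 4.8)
The plan is to push the whole question down to the level of the coefficients $f_k$ appearing in the representation $f=\sum_{k=0}^{n-1}\widehat{z^*}^k f_k$, and then apply Theorem~\ref{bcvekhardybdinsfuncins} to each coefficient one at a time, exactly mirroring the reduction used for Corollary~5.2 of \cite{WBD}.

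First I would note that, by Theorem~\ref{Thm: bcmetarep}, the $f_k$ occurring in the statement are the (uniquely determined) coefficients of the meta-analytic representation of $f$, so Theorem~\ref{Thm: bcmetahardyrep} applies to them: from $f\in V^{n,p}_{A,0}(D,\bc)$ we get $f_k\in H^p_{A,0}(D,\bc)$ for every $k$. Next, set $s:=\min_k s_k$; since each $s_k>p$ and there are only finitely many indices, $s>p$. Because $\p D$ has finite measure and $s\le s_k$, we have the inclusion $L^{s_k}(\p D,\bc)\subseteq L^{s}(\p D,\bc)$, so the hypothesis $f_{k,nt}\in L^{s_k}(\p D,\bc)$ yields $f_{k,nt}\in L^{s}(\p D,\bc)$ for every $k$. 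Since $A\in W^{n-1,q}(D,\bc)$ with $q>2$ embeds into $L^q(D,\bc)$, the hypotheses of Theorem~\ref{bcvekhardybdinsfuncins} are met for each $f_k$ with boundary exponent $s$, and that theorem gives $f_k\in H^{s}_{A,0}(D,\bc)$ for every $k$. Finally, the ``if'' direction of Theorem~\ref{Thm: bcmetahardyrep}, applied now with the exponent $s$ in place of $p$, shows $f=\sum_{k=0}^{n-1}\widehat{z^*}^k f_k\in V^{n,s}_{A,0}(D,\bc)$, which is the assertion.

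I do not expect a genuine obstacle: the substantive content has already been established in Theorems~\ref{Thm: bcmetahardyrep} and~\ref{bcvekhardybdinsfuncins}. The only points needing (routine) care are the verification that $s>p$ when $s$ is a finite minimum of exponents each exceeding $p$, the finite-measure inclusion $L^{s_k}(\p D,\bc)\subseteq L^{s}(\p D,\bc)$ that lets one pass to a single boundary exponent $s$ uniformly in $k$, the observation that $W^{n-1,q}(D,\bc)\subset L^q(D,\bc)$ so that the coefficient hypothesis on $A$ in Theorem~\ref{bcvekhardybdinsfuncins} is satisfied, and the remark that the $f_k$ in the statement coincide with the coefficients supplied by Theorem~\ref{Thm: bcmetarep}, so that the representation theorems genuinely apply to them. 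An equivalent route is to apply Theorem~\ref{bcvekhardybdinsfuncins} to $f_k$ with its own exponent $s_k$ and then invoke the nesting $H^{s_k}_{A,0}(D,\bc)\subseteq H^{s}_{A,0}(D,\bc)$; I would use whichever phrasing reads more cleanly.
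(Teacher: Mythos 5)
Your proposal is correct and follows essentially the same route as the paper: reduce to the coefficients via Theorem \ref{Thm: bcmetahardyrep}, upgrade each $f_k$ using Theorem \ref{bcvekhardybdinsfuncins}, and reassemble with Theorem \ref{Thm: bcmetahardyrep} at exponent $s$. The only cosmetic difference is that the paper applies Theorem \ref{bcvekhardybdinsfuncins} with each $s_k$ and then passes to the minimum via the nesting of Hardy spaces, which is precisely the ``equivalent route'' you mention at the end.
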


\begin{proof}
By Theorem \ref{Thm: bcmetahardyrep}, if $f \in V^{n,p}_{A,0}(D,\bc)$, then $f = \sum_{k = 0}^{n-1}  \widehat{z^*}^k f_k$ and $f_k \in H^p_{A,0}(D,\bc)$, for every $k$. Since every $f_k \in H^p_{A,0}(D,\bc)$ and each $f_{k,nt} \in L^{s_k}(\p D,\bc)$, for some $s_k > p$, it follows, by Theorem \ref{bcvekhardybdinsfuncins}, that $f_k \in H^{s_k}_{A,0}(D,\bc)$, for each $k$. Hence, every $f_k \in H^s_{A,0}(D,\bc)$, for every $k$, where $s:= \min_{k}\{s_k\}$. By Theorem \ref{Thm: bcmetahardyrep}, since every $f_k \in H^s_{A,0}(D,\bc)$, it follows that $f \in V^{n,s}_{A,0}(D,\bc)$. 
\end{proof}

\subsection{Representation}

We now return to the general case of solutions to HOIV-equations. First, we prove that in the general setting the $\bc$-HOIV-Hardy class functions have a polynomial representation. Unlike in the special case, these polynomials will be in terms of $\widehat{z}+\widehat{z^*} = 2\,\sca{z}$.

\begin{theorem}\label{Thm: bchoivrep}
    For $n$ a positive integer and $A,B \in W^{n-1,q}(D,\bc)$, $q>2$, every $f \in V^{n}_{A,B}(D,\bc)$ has the form 
    \[
        f = \sum_{k = 0}^{n-1} (\widehat{z} + \widehat{z^*})^k \varphi_k,
    \]
    where $\varphi_k \in \hab$, for every $k$. 
\end{theorem}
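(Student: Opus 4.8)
The plan is to run the same induction on $n$ that underlies Theorems \ref{bcpolyrep} and \ref{Thm: bcmetarep}, the essential new ingredient being that the quantity $\widehat z+\widehat{z^*}=2\,\sca{z}$ is \emph{real-valued}, hence fixed by bicomplex conjugation, so it interacts cleanly with the conjugation operator $C(\cdot)$ appearing in the $\bc$-HOIV operator $L:=\dbar-A-BC(\cdot)$. First I would record the needed identities: $\dbar$ is a derivation on $\bc$-valued functions (because $\bc$ is commutative), and $\dbar(\widehat z+\widehat{z^*})=\dbar\widehat{z^*}=1$ by the formulas recalled just before Theorem \ref{Thm: bctoperator}, whence $\dbar\big((\widehat z+\widehat{z^*})^{k+1}\big)=(k+1)(\widehat z+\widehat{z^*})^k$; moreover $\overline{(\widehat z+\widehat{z^*})^{k+1}}=(\widehat z+\widehat{z^*})^{k+1}$ since this element has vanishing vector part. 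Combining these with the Leibniz rule gives the pivotal computation
\[
    L\big((\widehat z+\widehat{z^*})^{k+1}\psi\big)=(k+1)(\widehat z+\widehat{z^*})^{k}\,\psi+(\widehat z+\widehat{z^*})^{k+1}\,L\psi ,
\]
valid for any sufficiently regular $\bc$-valued $\psi$; in particular, when $\psi\in\hab$ the right-hand side collapses to $(k+1)(\widehat z+\widehat{z^*})^{k}\psi$. I would also note that $L$ is additive and commutes with multiplication by real constants, so $L^n$ is additive and, iterating the displayed identity, $L^n\big((\widehat z+\widehat{z^*})^k\varphi_k\big)=0$ for every $\varphi_k\in\hab$ and $0\le k\le n-1$; this disposes of the easy direction that every function of the claimed form lies in $V^n_{A,B}(D,\bc)$.

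For the converse I would induct on $n$. The case $n=1$ is the definition of $\hab$. Assuming the claim for $1,\dots,m-1$, let $g\in V^m_{A,B}(D,\bc)$; then $Lg\in V^{m-1}_{A,B}(D,\bc)$, so by the inductive hypothesis $Lg=\sum_{k=0}^{m-2}(\widehat z+\widehat{z^*})^k\varphi_k$ with each $\varphi_k\in\hab$. Applying the displayed identity with $\psi=\frac{1}{k+1}\varphi_k$ together with additivity of $L$, the function
\[
    \gamma:=g-\sum_{k=0}^{m-2}\frac{1}{k+1}(\widehat z+\widehat{z^*})^{k+1}\varphi_k
\]
satisfies $L\gamma=Lg-\sum_{k=0}^{m-2}(\widehat z+\widehat{z^*})^k\varphi_k=0$, i.e.\ $\gamma\in\hab$. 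Relabeling $\phi_0:=\gamma$ and $\phi_{k+1}:=\frac{1}{k+1}\varphi_k$ for $0\le k\le m-2$ then yields $g=\sum_{j=0}^{m-1}(\widehat z+\widehat{z^*})^j\phi_j$ with all coefficients in $\hab$, completing the induction. This is exactly the template of Theorem \ref{Thm: bcmetarep}, carried out for the real-linear operator $L$ with $\widehat{z^*}$ replaced throughout by $\widehat z+\widehat{z^*}$.

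The step I expect to require the most care — and the reason for the hypothesis $A,B\in W^{n-1,q}(D,\bc)$ with $q>2$ — is the regularity bookkeeping: at each stage one must know that $L^{j}g$ is a genuine function to which $L$ may again be applied, and that the Leibniz rule and the passage $Lg\in V^{m-1}_{A,B}(D,\bc)$ are legitimate at the available level of smoothness. Since $q>2$ forces $W^{1,q}$ functions to be (indeed Hölder) continuous, products such as $A\varphi_k$, $B\overline{\varphi_k}$ and their further $\dbar$-derivatives are meaningful, and one peels off one derivative of the coefficients per application of $L$, precisely as in the meta-analytic case of Theorem \ref{Thm: bcmetarep}. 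The algebra itself, on the other hand, is forced the moment one observes that $\widehat z+\widehat{z^*}$ is real-valued and is a $\dbar$-primitive of $1$.
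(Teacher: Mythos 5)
Your proposal is correct and follows essentially the same route as the paper: the same induction on $n$, the same subtraction of $\sum_{k}\frac{1}{k+1}(\widehat z+\widehat{z^*})^{k+1}\varphi_k$ in the inductive step, and the same two key observations that $\dbar(\widehat z+\widehat{z^*})^{k+1}=(k+1)(\widehat z+\widehat{z^*})^{k}$ and that $\widehat z+\widehat{z^*}$ is real, hence fixed by bicomplex conjugation so that it passes through $C(\cdot)$. The only difference is presentational: you package the cancellation as a single Leibniz-type identity for $L=\dbar-A-BC(\cdot)$, whereas the paper expands the terms explicitly inside the sum.
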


\begin{proof}
Observe that functions of the form $f(z) = \sum_{k = 0}^{n-1} (\widehat{z} + \widehat{z^*})^k \varphi_k(z)$, where $\varphi_k \in \hab$, for every $k$, are elements of $V^n_{A,B}(D,\bc)$ by direct computation. 

We prove the other direction by induction. The result is clear for $n = 1$. Suppose that, for every $n$ satisfying $1 \leq n \leq m-1$, that if $f \in V^n_{A,B}(D, \bc)$, then there exists a collection $\{\varphi_k\}_{k=0}^{n-1}$ such that $\varphi_k\in \hab$, for every $k$, and
\[
    f = \sum_{k = 0}^{n-1} (\widehat{z} + \widehat{z^*})^k \varphi_k
\]
Let $g \in V^m_{A,B}(D, \bc)$. So, $(\dbar-A-BC(\cdot)) g \in V^{m-1}_{A,B}(D, \bc)$, and there exists a collection $\{\varphi_k\}_{k=0}^{m-2}$ such that $\varphi_k \in H_{A,B}(D,\bc)$, for every $k$, and 
\[
    (\dbar-A-BC(\cdot)) g = \sum_{k=0}^{m-2} (\widehat{z} + \widehat{z^*})^k \varphi_k.
\]
Observe that 
\begin{align*}
    &(\dbar-A- BC(\cdot))\left( g - \sum_{k=0}^{m-2} \frac{1}{k+1}(\widehat{z} + \widehat{z^*})^{k+1} \varphi_k\right) \\  
    &= \left(\dbar -A- BC(\cdot)\right)g - (\dbar-A- BC(\cdot)) \sum_{k=0}^{m-2} \frac{1}{k+1}(\widehat{z} + \widehat{z^*})^{k+1} \varphi_k \\
    &= \sum_{k=0}^{m-2} (\widehat{z} + \widehat{z^*})^k \varphi_k j- \sum_{k=0}^{m-2} (\widehat{z} + \widehat{z^*})^k \varphi_k- \sum_{k=0}^{m-2} \frac{1}{k+1}(\widehat{z} + \widehat{z^*})^{k+1} (A\varphi_k + BC(\varphi_k)) \\
    &\quad\quad + A \sum_{k=0}^{m-2} \frac{1}{k+1}(\widehat{z} + \widehat{z^*})^{k+1} \varphi_k + BC(\sum_{k=0}^{m-2} \frac{1}{k+1}(\widehat{z} + \widehat{z^*})^{k+1} \varphi_k)\\
    &=- \sum_{k=0}^{m-2} \frac{1}{k+1}(\widehat{z} + \widehat{z^*})^{k+1} (A\varphi_k + BC(\varphi_k)) \\
    &\quad\quad + \sum_{k=0}^{m-2} \frac{1}{k+1}(\widehat{z} + \widehat{z^*})^{k+1} A\varphi_k + \sum_{k=0}^{m-2} \frac{1}{k+1}(\widehat{z} + \widehat{z^*})^{k+1} BC(\varphi_k)\\
    &= 0.
\end{align*}

Thus, $g - \sum_{k=0}^{m-2} \frac{1}{k+1}(\widehat{z} + \widehat{z^*})^{k+1} \varphi_k \in H_{A,B}(D,\bc)$. Let 
\[
\gamma = g - \sum_{k=0}^{m-2} \frac{1}{k+1}(\widehat{z} + \widehat{z^*}) ^{k+1} \varphi_k.
\]
Let $\phi_0 := \gamma$ and $\phi_{k+1} := \frac{1}{k+1} \varphi_k$, for every $0 \leq k \leq m-2$. Then we rearrange to have
\[
    g = \sum_{j = 0}^{m-1} (\widehat{z} + \widehat{z^*})^j \phi_j.
\]

\end{proof}

Similarly to Theorem \ref{Thm: bcmetahardyrep} and extending Theorem 4.1 of \cite{WBD} to the bicomplex setting, we show that if a function is in a $\bc$-HOIV-Hardy class, then the coefficients of the polynomial representation are in the associated $\bc$-Vekua-Hardy space. 

\begin{theorem}\label{Thm: bchoivhardyrep}
    For $0 < p < \infty$, $n$ a positive integer, and $A,B \in W^{n-1, q}(D,\bc)$, $q>2$, a function $f$ is in $V^{n,p}_{A,B}(D,\bc)$ with representation 
    \[
        f = \sum_{k = 0}^{n-1} (\widehat{z} + \widehat{z^*})^k \varphi_k
    \]
    if and only if $\varphi_k \in \hpab$, for every $k$. 
\end{theorem}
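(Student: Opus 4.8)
The plan is to imitate the proof of Theorem~\ref{Thm: bcmetahardyrep}, replacing $\widehat{z^*}$ by $\widehat z + \widehat{z^*} = 2\,\sca z$ and the operator $\dbar - A$ by $L := \dbar - A - BC(\cdot)$. By Theorem~\ref{Thm: bchoivrep}, every $f \in V^{n}_{A,B}(D,\bc)$ has the representation $f = \sum_{k=0}^{n-1}(\widehat z + \widehat{z^*})^k\varphi_k$ with $\varphi_k \in \hab$, and conversely every such finite sum lies in $V^n_{A,B}(D,\bc)$; so the task is only to match the $H^p_\bc$ growth condition on the two sides. The arithmetic core is the identity
\[
L\big((\widehat z + \widehat{z^*})^m\psi\big) = m\,(\widehat z + \widehat{z^*})^{m-1}\psi + (\widehat z + \widehat{z^*})^m\,L\psi,
\]
valid for $m \ge 1$ and arbitrary $\psi$, which follows from the Leibniz rule for $\dbar$ together with $\dbar(\widehat z + \widehat{z^*}) = 1$ and the fact that $\widehat z + \widehat{z^*}$ is fixed by bicomplex conjugation, so that $C(\cdot)$ passes through the polynomial factor without producing a stray conjugate. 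Iterating, if $\psi \in \hab$ (so $L\psi = 0$) then $L^{k}\big((\widehat z + \widehat{z^*})^m\psi\big) = \frac{m!}{(m-k)!}(\widehat z + \widehat{z^*})^{m-k}\psi$ for $0 \le k \le m$ and $0$ for $k > m$.

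Applying this termwise to the representation of $f$ gives
\[
L^k f = \sum_{j=k}^{n-1}\frac{j!}{(j-k)!}(\widehat z + \widehat{z^*})^{j-k}\varphi_j, \qquad 0 \le k \le n-1,
\]
a triangular system whose standard binomial inversion yields the explicit formula
\[
\varphi_k = \frac{1}{k!}\sum_{j=0}^{n-1-k}\frac{(-1)^j}{j!}(\widehat z + \widehat{z^*})^j\, L^{k+j}f,
\]
exactly as in Remark 3.2 of \cite{WBD} and Theorem~\ref{Thm: bcmetahardyrep}. With these two formulas both implications are routine. If $f \in V^{n,p}_{A,B}(D,\bc)$, then for each $k$ and each $r\in(0,1)$ one expands $\int_0^{2\pi}\|\varphi_k(re^{i\theta})\|_\bc^p\,d\theta$ via the inversion formula and bounds it using the quasi-triangle inequality for the $p$-th power (constant depending only on $n$ and $p$), the submultiplicativity \eqref{stareqn}, and the bound $\|(\widehat z + \widehat{z^*})^j\|_\bc = |2\,\sca z|^j \le 2^j$ on $\overline D$; taking the supremum over $r$ controls $\|\varphi_k\|_{H^p_\bc}^p$ by a constant times $\sum_{j=0}^{n-1-k}\sup_{0<r<1}\int_0^{2\pi}\|L^{k+j}f(re^{i\theta})\|_\bc^p\,d\theta$, which is finite by the definition of $V^{n,p}_{A,B}(D,\bc)$, so $\varphi_k \in \hpab$. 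Conversely, if every $\varphi_k \in \hpab$, the same estimate applied to $L^k f = \sum_{j=k}^{n-1}\frac{j!}{(j-k)!}(\widehat z + \widehat{z^*})^{j-k}\varphi_j$ bounds each $\sup_{0<r<1}\int_0^{2\pi}\|L^k f(re^{i\theta})\|_\bc^p\,d\theta$ by a constant times $\sum_{j=k}^{n-1}\|\varphi_j\|_{H^p_\bc}^p < \infty$; summing over $k$ gives the defining finiteness condition, and since $f$ already solves the HOIV-equation by Theorem~\ref{Thm: bchoivrep}, we conclude $f \in V^{n,p}_{A,B}(D,\bc)$.

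The only genuinely new point, relative to the $B \equiv 0$ case of Theorem~\ref{Thm: bcmetahardyrep}, is verifying that the conjugation operator $C(\cdot)$ does not obstruct the displayed operator identity; this is precisely where the choice of $\widehat z + \widehat{z^*}$ rather than $\widehat{z^*}$ is essential, since $\overline{\widehat z + \widehat{z^*}} = \widehat{z^*} + \widehat z$. Once that commutation is checked, the combinatorics of the triangular inversion are identical to the scalar and meta cases, and every integral estimate reduces to the boundedness-of-polynomial-factor arguments already used throughout Sections~\ref{BCVekuaHardyspaces}--\ref{BCHOIVHardyspaces}. I expect the verification of the $C(\cdot)$-commutation and the bookkeeping in the inversion formula to be the main (and fairly modest) obstacle; the remainder is mechanical.
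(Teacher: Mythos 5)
Your proposal is correct and follows essentially the same route as the paper: the same triangular system $L^k f = \sum_{j\ge k}\frac{j!}{(j-k)!}(\widehat z+\widehat{z^*})^{j-k}\varphi_j$, the same binomial inversion formula for $\varphi_k$, and the same quasi-triangle-inequality estimates using the boundedness of $(\widehat z+\widehat{z^*})^j$ on $\overline D$. The only difference is that you derive the commutation identity for $L=\dbar-A-BC(\cdot)$ explicitly (correctly noting that $\widehat z+\widehat{z^*}=2x$ is fixed by bicomplex conjugation), whereas the paper imports the formulas by citation to \cite{WBD}.
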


\begin{proof}
First, suppose that $f \in V^{n,p}_{A,B}(D,\bc)$. By Theorem \ref{Thm: bchoivrep}, 
\[
    f = \sum_{k = 0}^{n-1} (\widehat{z} + \widehat{z^*})^k \varphi_k,
\]
where
\[
    \dbar\varphi_k = A\varphi_k + B\overline{\varphi_k},
\]
for every $k$. Observe that each $\varphi_k$ is representable as
\[
    \varphi_k = \frac{1}{k!}\sum_{j=0}^{n-1-k} \frac{(-1)^{j}}{j!} (\widehat{z}+\widehat{z^*})^j (\dbar - A-BC(\cdot))^{k+j}f,
\]
    for every $k$. For this formula in the complex setting, see Remark 3.2 of \cite{WBD} (or Theorem 1 of \cite{itvek} or Theorem 4.1 of \cite{itvekbvp}). Using this representation, we can see that, for $r \in (0,1)$, 
\begin{align*}
    &\int_0^{2\pi} ||\varphi_k(re^{i\theta})||_{\bc}^p \,d\theta\\
    &= \int_0^{2\pi} ||\frac{1}{k!}\sum_{j=0}^{n-1-k} \frac{(-1)^{j}}{j!} (\widehat{re^{i\theta}}+\widehat{(re^{i\theta})^*})^j (\dbar - A-BC(\cdot))^{k+j}f(re^{i\theta})||_{\bc}^p \,d\theta\\
    &\leq C_{n,k,p} \sum_{j=0}^{n-1-k}  \int_0^{2\pi} ||(\widehat{re^{i\theta}}+\widehat{(re^{i\theta})^*})^j (\dbar - A-BC(\cdot))^{k+j}f(re^{i\theta})||_{\bc}^p \,d\theta\\
    &\leq C_{n,p} \sum_{j=0}^{n-1-k} ||(\dbar - A-BC(\cdot))^{k+j}f(re^{i\theta})||_{H^p}^p <\infty,
\end{align*}
where $C_{n,k,p}$ and $C_{n,p}$ are constants that depend on only $n$, $k$, and $p$. Hence, 
\[
    \sup_{0 < r < 1} \int_0^{2\pi} ||\varphi_k(re^{i\theta})||_{\bc}^p \,d\theta < \infty,
\]
for every $k$, and $\varphi_k \in H^p_{A,B}(D, \bc)$, for every $k$.

Now, suppose that $f=\sum_{k = 0}^{n-1} (\widehat{z} + \widehat{z^*})^k \varphi_k$ solves $(\dbar - A - BC(\cdot))^n f = 0$ and $\varphi_k \in H^p_{A,B}(D,\bc)$, for every $k$. Since 
\[
    (\dbar - A- BC(\cdot))^j f = \sum_{k=j}^{n-1} C_{j,n}(\widehat{z} + \widehat{z^*})^{k-j}\varphi_k,
\]
where $C_{j,n}$ is a constant that depends on only $j$ and $k$, for every $0 \leq j \leq n-1$, it follows that, for $r \in (0,1)$, 
\begin{align*}
    &\int_0^{2\pi} ||(\dbar - A- BC(\cdot))^j f(re^{i\theta})||_{\bc}^p \,d\theta\\
    &= \int_0^{2\pi} ||\sum_{k=j}^{n-1} C_{j,n}(\widehat{re^{i\theta}} + \widehat{(re^{i\theta})^*})^{k-j}\varphi_k(re^{i\theta})||_{\bc}^p \,d\theta \\
    &\leq C_{n,p} \sum_{k=j}^{n-1}\int_0^{2\pi}||(\widehat{re^{i\theta}} + \widehat{(re^{i\theta})^*})^{k-j}\varphi_k(re^{i\theta})||_{\bc}^p\\
    &\leq C \sum_{k=j}^{n-1} ||\varphi_k||^{p}_{H^p} < \infty,
\end{align*}
where $C_{n,p}$ and $C$ are constants that depend on only $n$ and $p$. Thus, 
\[
 \sup_{0 < r < 1} \int_0^{2\pi} ||(\dbar - A- BC(\cdot))^j f(re^{i\theta})||_{\bc}^p \,d\theta < \infty,
\]
for every $0 \leq j\leq n-1$, and $f \in V^{n,p}_{A,B}(D,\bc)$. 
\end{proof}

Now, we show that for a specific case of the most general bicomplex-HOIV-Hardy spaces that inclusion in a Hardy space implies inclusion in a range of Lebesgue spaces. This is a bicomplexification of Theorem 4.2 from \cite{WBD}.

\begin{theorem}
For $0 < p < \infty$, $n$ a positive integer, and $A,B \in W^{n-1, q}(D,\bc)$, $q>2$, every function $f \in V^{n,p}_{A,B}(D,\bc)$ with representation $f = \sum_{k = 0}^{n-1} (\widehat{z} + \widehat{z^*})^k \varphi_k$ such that $\dbar \varphi_k \in L^{q_k}(D,\bc)$, where $q_k>2$ or $1 < q_k \leq 2$ and $p < \frac{q_k}{2 - q_k}$, for every $k$, is in $L^m(D,\bc)$, for every $0 < m < 2p$. 
\end{theorem}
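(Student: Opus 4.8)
The plan is to mimic the proofs of Theorem \ref{inBCVekHardypinLebm}, its generalization Theorem \ref{genbcvekhardyinlebm}, and the corresponding poly-Hardy statements, combining the polynomial representation of \ref{Thm: bchoivrep}--\ref{Thm: bchoivhardyrep} with the fact that each coefficient lies in a $\bc$-Vekua-Hardy space whose right-hand side is controlled. First I would invoke Theorem \ref{Thm: bchoivhardyrep} to write $f = \sum_{k=0}^{n-1} (\widehat{z} + \widehat{z^*})^k \varphi_k$ with each $\varphi_k \in \hpab$. Each $\varphi_k$ solves $\dbar \varphi_k = A\varphi_k + B\overline{\varphi_k}$, and by hypothesis $\dbar \varphi_k \in L^{q_k}(D,\bc)$ with either $q_k > 2$ or ($1 < q_k \le 2$ and $p < \frac{q_k}{2-q_k}$). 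These are exactly the hypotheses of Theorem \ref{genbcvekhardyinlebm} applied to $\varphi_k \in H^p_{A,B}(D,\bc)$, so each $\varphi_k \in L^m(D,\bc)$ for every $0 < m < 2p$.

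Next I would control the polynomial factors. The function $\widehat{z} + \widehat{z^*} = 2\,\sca{z}$ is bounded on $\overline{D}$ (indeed $|2\,\sca z| \le 2$), so $\|(\widehat{z}+\widehat{z^*})^k\|_\bc \le 2^k \cdot$(a constant from \eqref{stareqn}) is bounded on $D$ uniformly in $z$ for each fixed $k$. Therefore, using \eqref{stareqn} and the elementary inequality $\left(\sum_{k=0}^{n-1} a_k\right)^m \le C_{n,m} \sum_{k=0}^{n-1} a_k^m$ for nonnegative $a_k$, I obtain
\[
\iint_D \|f(z)\|_\bc^m \, dx\, dy \le C_{n,m} \sum_{k=0}^{n-1} \iint_D \|(\widehat{z}+\widehat{z^*})^k \varphi_k(z)\|_\bc^m \, dx\, dy \le C \sum_{k=0}^{n-1} \iint_D \|\varphi_k(z)\|_\bc^m \, dx\, dy,
\]
where $C$ absorbs $C_{n,m}$, the $\sqrt{2}$ from \eqref{stareqn}, and the uniform bounds $2^{km}$ on the polynomial factors; this is finite for every $0 < m < 2p$ by the previous paragraph. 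Hence $f \in L^m(D,\bc)$ for every $0 < m < 2p$.

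I do not expect any genuine obstacle here: the result is a straightforward synthesis of Theorem \ref{genbcvekhardyinlebm} (which does the analytic heavy lifting of deducing $L^m$ membership from Hardy membership plus an $L^{q_k}$ right-hand side) with the polynomial representation of Theorem \ref{Thm: bchoivhardyrep}. The only point requiring a small observation is that the polynomial factors $(\widehat{z}+\widehat{z^*})^k$ are uniformly bounded on $D$ — true because $\sca z = \re z \in [-1,1]$ for $z \in \overline{D}$ — which is what lets the polynomial coefficients be pulled out of each integral without affecting integrability. Everything else is the now-familiar triangle-inequality-plus-$\eqref{stareqn}$ bookkeeping used repeatedly above, with constants depending only on $n$, $m$, and the (finitely many) bounds on the polynomial factors.
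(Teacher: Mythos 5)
Your proposal is correct and follows essentially the same route as the paper's proof: invoke Theorem \ref{Thm: bchoivhardyrep} for the representation with coefficients in $H^p_{A,B}(D,\bc)$, apply Theorem \ref{genbcvekhardyinlebm} to each coefficient, and absorb the uniformly bounded factors $(\widehat{z}+\widehat{z^*})^k = (2\re z)^k$ into the constant. The only difference is that you spell out the boundedness of the polynomial factors explicitly, which the paper leaves implicit.
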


\begin{proof}
    By Theorem \ref{Thm: bchoivhardyrep}, if $f \in V^{n,p}_{A,B}(D,\bc)$, then $\sum_{k = 0}^{n-1} (\widehat{z} + \widehat{z^*})^k f_k$, where every 
    $f_k \in H^p_{A,B}(D,\bc)$. Since $f_k \in H^p_{A,B}(D,\bc)$ and $\dbar \varphi_k \in L^{q_k}(D,\bc)$, $q_k>2$ or $1 < q_k \leq 2$ and $p < \frac{q_k}{2 - q_k}$, for every $k$, it follows by Theorem \ref{genbcvekhardyinlebm} that $f_k \in L^m(D,\bc)$, for every $0 < m < 2p$. Thus, 
    \begin{align*}
    \iint_D ||f(z)||_{\bc}^m \,dx\,dy 
    &= \iint_D \left|\left|\sum_{k = 0}^{n-1} (\widehat{z} + \widehat{z^*})^k f_k(z)\right|\right|_{\bc}^m \,dx\,dy \\
    &\leq C\sum_{k = 0}^{n-1}\iint_D \left|\left|  f_k(z)\right|\right|_{\bc}^m \,dx\,dy < \infty,
 \end{align*}
 where $C$ is a constant that depends on $n$ and $m$. Therefore, $f \in L^m(D,\bc)$. 
\end{proof}

\subsection{Boundary Behavior}

Finally, we show that the expected boundary behavior of a Hardy space is recovered in the bicomplex-HOIV-Hardy spaces. The sufficient conditions that comprise the hypotheses of these theorem are taken from the statements of Theorems \ref{Thm: bcvekhardybvcon} and \ref{bcvekhardybvfromidempotent} that guarantee existence of boundary values in the first-order case. The corollary that follows is a bicomplexification of Corollary 5.4 in \cite{WBD} that follows from the combination of the two theorems and Theorem \ref{lonedistbv}.

\begin{theorem}\label{thm: bchoivhardybvcon}
    For $n$ a positive integer, $0 < p < \infty$, and $A, B \in W^{n-1,q}(D,\bc)$, $q>2$, every $f \in V^{n,p}_{A,B}(D,\bc)$ with representation $f = \sum_{k=0}^{n-1} (\widehat{z^*} + \widehat{z})^k \varphi_k$ such that $\varphi_k = h_k + T_\bc(A\varphi_k + B\overline{\varphi_k})$, $\dbar \varphi_k \in L^{q_k}(D,\bc)$, $q_k > 2$, and $h_k^+ \in H^p_{\gamma_k}(D)$, $\gamma_k \in L^{\sigma_k}(D)$, $\sigma_k > 2$ or $1 < \sigma_k \leq 2$ and $p < \frac{\sigma_k}{2-\sigma_k}$, for every $k$, has a nontangential boundary value $f_{nt} \in L^p(\p D,\bc)$ and 
    \[
        \lim_{r \nearrow 1} \int_0^{2\pi} ||f_{nt}(e^{i\theta}) - f(re^{i\theta}) ||^p_{\bc} \, d\theta  = 0. 
    \]
    The result holds for $q_k$ satisfying $1 < q_k \leq 2$ so long as $p$ satisfies $\frac{q_k}{2-q_k}$, for each relevant $k$.
\end{theorem}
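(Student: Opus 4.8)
The plan is to reduce the statement to the first-order boundary-value theorem, Theorem~\ref{Thm: bcvekhardybvcon}, applied termwise to the coefficients of the polynomial representation, and then to absorb the error produced by replacing $re^{i\theta}$ by $e^{i\theta}$ in the monomial factors. First I would invoke Theorem~\ref{Thm: bchoivhardyrep}: since $f\in V^{n,p}_{A,B}(D,\bc)$ has the given representation $f=\sum_{k=0}^{n-1}(\widehat{z}+\widehat{z^*})^k\varphi_k$, each coefficient satisfies $\varphi_k\in\hpab$, and (by Theorem~\ref{Thm: bchoivrep}) each $\varphi_k$ solves $\dbar\varphi_k=A\varphi_k+B\overline{\varphi_k}$. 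The hypotheses placed on the $\varphi_k$ in the statement --- the representation of the second kind $\varphi_k=h_k+T_\bc(A\varphi_k+B\overline{\varphi_k})$ (available by Proposition~\ref{Prop: bcrepsecond}), the integrability $\dbar\varphi_k\in L^{q_k}(D,\bc)$ with $q_k>2$, and $h_k^+\in H^p_{\gamma_k}(D)$ with $\gamma_k\in L^{\sigma_k}(D)$ under the stated relation between $\sigma_k$ and $p$ --- are exactly the hypotheses of Theorem~\ref{Thm: bcvekhardybvcon} (with $q\to q_k$, $w\to\gamma_k$, $\gamma\to\sigma_k$). Applying that theorem, each $\varphi_k$ has a nontangential boundary value $\varphi_{k,nt}\in L^p(\p D,\bc)$ with
\[
\lim_{r\nearrow1}\int_0^{2\pi}||\varphi_{k,nt}(e^{i\theta})-\varphi_k(re^{i\theta})||_\bc^p\,d\theta=0,\qquad 0\le k\le n-1.
\]

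Next I would record the convenient observation that $\widehat{z}+\widehat{z^*}=2\,\sca z$ is a real scalar: evaluated at $z=re^{i\theta}$ it equals $2r\cos\theta$ and on $\p D$ it equals $2\cos\theta$, so the monomials $(\widehat{re^{i\theta}}+\widehat{(re^{i\theta})^*})^k$ are bounded by $2^k$ uniformly in $r$ and $\theta$, and $|(2\cos\theta)^k-(2r\cos\theta)^k|\le 2^k(1-r^k)$. Since $||cw||_\bc=|c|\,||w||_\bc$ for a scalar $c$ (multiplication being componentwise in the idempotent decomposition, cf.\ Proposition~\ref{everybchasplusandminus}), the function $f_{nt}:=\sum_{k=0}^{n-1}(\widehat{e^{i\theta}}+\widehat{(e^{i\theta})^*})^k\varphi_{k,nt}$ is the nontangential limit of $f$, and the quasi-triangle inequality in $L^p$ together with the uniform bound $2^k$ immediately gives $f_{nt}\in L^p(\p D,\bc)$.

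For the $L^p$ convergence I would decompose
\begin{align*}
f_{nt}(e^{i\theta})-f(re^{i\theta}) &= \sum_{k=0}^{n-1}(2\cos\theta)^k\bigl(\varphi_{k,nt}(e^{i\theta})-\varphi_k(re^{i\theta})\bigr)\\
&\quad + \sum_{k=0}^{n-1}\bigl((2\cos\theta)^k-(2r\cos\theta)^k\bigr)\varphi_k(re^{i\theta}),
\end{align*}
raise to the $p$-th power and integrate. By the uniform bound $2^k$ and the quasi-triangle inequality the first sum contributes at most $C_{p,n}\sum_k\int_0^{2\pi}||\varphi_{k,nt}(e^{i\theta})-\varphi_k(re^{i\theta})||_\bc^p\,d\theta$, which tends to $0$ as $r\nearrow1$ by the termwise limit of the first paragraph; using $|(2\cos\theta)^k-(2r\cos\theta)^k|\le 2^k(1-r^k)$ and $\int_0^{2\pi}||\varphi_k(re^{i\theta})||_\bc^p\,d\theta\le||\varphi_k||_{H^p_\bc}^p$, the second sum contributes at most $C_{p,n}\sum_k(1-r^k)^p||\varphi_k||_{H^p_\bc}^p\to0$. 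Combining the two estimates yields $\lim_{r\nearrow1}\int_0^{2\pi}||f_{nt}(e^{i\theta})-f(re^{i\theta})||_\bc^p\,d\theta=0$. The extension to $1<q\le2$ with $p<\frac{q}{2-q}$ runs verbatim, since Theorems~\ref{Thm: bchoivhardyrep} and~\ref{Thm: bcvekhardybvcon} each carry the corresponding clause and none of the estimates above sees the size of $q$.

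I do not expect a genuine obstacle: the theorem is a clean combination of the polynomial-representation theorem for $\bc$-HOIV-Hardy functions with the first-order $\bc$-Vekua boundary-value theorem, glued by a routine triangle-inequality estimate. The only point requiring care is the bookkeeping --- confirming that the list of hypotheses attached to each $\varphi_k$ matches, term for term, the hypotheses of Theorem~\ref{Thm: bcvekhardybvcon} --- together with the observation that $(\widehat{z}+\widehat{z^*})^k=(2\,\sca z)^k$ is a real scalar, which is precisely what prevents the monomial error term from interfering with the $\bc$-norm estimates.
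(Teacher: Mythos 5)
Your proposal is correct and follows essentially the same route as the paper: reduce via Theorem \ref{Thm: bchoivhardyrep} to coefficients $\varphi_k \in H^p_{A,B}(D,\bc)$, apply Theorem \ref{Thm: bcvekhardybvcon} termwise under the matching hypotheses, and split $f_{nt}-f$ into a sum controlled by the termwise $L^p$ convergence plus a monomial-difference term controlled by a $(1-r^k)$ factor. Your explicit computation $(\widehat{z}+\widehat{z^*})^k=(2r\cos\theta)^k$ with the bound $2^k(1-r^k)$ is in fact cleaner than the paper's (which miswrites this quantity as $\cos(k\theta)-r^k\cos(k\theta)$ but reaches the same conclusion), and bounding the second sum by $\sum_k(1-r^k)^p\|\varphi_k\|_{H^p_\bc}^p$ directly avoids the paper's appeal to dominated convergence.
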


\begin{proof}
By Theorem \ref{Thm: bchoivhardyrep}, if $f \in V^{n,p}_{A,B}(D,\bc)$, then $f = \sum_{k = 0}^{n-1} (\widehat{z} + \widehat{z^*})^k \varphi_k$ and $\varphi_k \in H^p_{A,B}(D,\bc)$, for every $k$. By Theorem \ref{bcvekhardyrepgeneral}, if $\dbar \varphi_k \in L^{q_k}(D,\bc)$, $q_k > 2$, $\varphi_k$ is representable as 
\[
        \varphi_k = h_k + T_\bc(A\varphi_k + B\overline{\varphi_k}),
\]
where $h_k \in H^p(D,\bc)$. By Theorem \ref{Thm: bcvekhardybvcon}, if $h_k^+ \in H^p_{\gamma_k}(D)$, $\gamma_k \in L^{\sigma_k}(D)$, $\sigma_k > 2$ or $1 < \sigma_k \leq 2$ and $p < \frac{\sigma_k}{2-\sigma_k}$, then $h_k$ has a nontangential boundary value $\varphi_{k,nt} \in L^p(\p D, \bc)$ and 
\begin{equation}\label{bvconhoivk}
        \lim_{r \nearrow 1} \int_0^{2\pi} ||\varphi_{k,nt}(e^{i\theta}) - \varphi_k(re^{i\theta}) ||^p_{\bc} \, d\theta  = 0. 
    \end{equation}
By our hypotheses, this holds for every $\varphi_k$, $0 \leq k \leq n-1$. So, $f_{nt} = \sum_{k = 0}^{n-1} (\widehat{e^{i\theta}} + \widehat{(e^{i\theta})^*})^k \varphi_{k,nt}$ exists and is in $L^p(\p D, \bc)$. Also, for $r \in (0,1)$, we have
\begin{align*}
    &\int_0^{2\pi} ||f_{nt}(e^{i\theta}) - f(re^{i\theta}) ||^p_{\bc} \, d\theta \\
    &= \int_0^{2\pi} ||\sum_{k = 0}^{n-1} (\widehat{e^{i\theta}} + \widehat{(e^{i\theta})^*})^k \varphi_{k,nt}(e^{i\theta}) - \sum_{k = 0}^{n-1} (\widehat{re^{i\theta}} + \widehat{(re^{i\theta})^*})^k \varphi_k(re^{i\theta}) ||^p_{\bc} \, d\theta\\
    &\leq C_{p} \int_0^{2\pi} ||\sum_{k = 0}^{n-1} (\widehat{e^{i\theta}} + \widehat{(e^{i\theta})^*})^k \varphi_{k,nt}(e^{i\theta}) - \sum_{k = 0}^{n-1} (\widehat{e^{i\theta}} + \widehat{(e^{i\theta})^*})^k \varphi_k(re^{i\theta}) ||^p_{\bc} \, d\theta\\
    &\quad\quad + C_{p} \int_0^{2\pi} ||\sum_{k = 0}^{n-1} (\widehat{e^{i\theta}} + \widehat{(e^{i\theta})^*})^k \varphi_{k}(re^{i\theta}) - \sum_{k = 0}^{n-1} (\widehat{re^{i\theta}} + \widehat{(re^{i\theta})^*})^k \varphi_k(re^{i\theta}) ||^p_{\bc} \, d\theta\\
    &= C_{p,n} \sum_{k = 0}^{n-1} \int_0^{2\pi} || (\widehat{e^{i\theta}} + \widehat{(e^{i\theta})^*})^k (\varphi_{k,nt}(e^{i\theta}) -  \varphi_k(re^{i\theta}) )||^p_{\bc} \, d\theta\\
    &\quad\quad + C_{p,n} \sum_{k = 0}^{n-1} \int_0^{2\pi} || [(\widehat{e^{i\theta}} + \widehat{(e^{i\theta})^*})^k -  (\widehat{re^{i\theta}} + \widehat{(re^{i\theta})^*})^k ]\varphi_k(re^{i\theta}) ||^p_{\bc} \, d\theta\\
    &\leq C_{p,n,k} \sum_{k = 0}^{n-1} \int_0^{2\pi} || \varphi_{k,nt}(e^{i\theta}) -  \varphi_k(re^{i\theta}) ||^p_{\bc} \, d\theta\\
    &\quad\quad + C_{p,n} \sum_{k = 0}^{n-1} \int_0^{2\pi} || [(\widehat{e^{i\theta}} + \widehat{(e^{i\theta})^*})^k -  (\widehat{re^{i\theta}} + \widehat{(re^{i\theta})^*})^k ]\varphi_k(re^{i\theta}) ||^p_{\bc} \, d\theta,
\end{align*}
where $C_p$, $C_{p,n}$, and $C_{p,n,k}$ are all constants that only depend on their subscripts. Observe that 
\begin{align*}
(\widehat{e^{i\theta}} + \widehat{(e^{i\theta})^*})^k -  (\widehat{re^{i\theta}} + \widehat{(re^{i\theta})^*})^k 
&= (e^{j\theta} + e^{-j\theta})^k -  (re^{j\theta} + re^{-j\theta})^k  \\
&= \cos(k\theta) - r^k \cos(k\theta) \\
&= (1-r^k)\cos(k\theta). 
\end{align*}
So, for each $k$, 
\begin{align*}
&\int_0^{2\pi} || [(\widehat{e^{i\theta}} + \widehat{(e^{i\theta})^*})^k -  (\widehat{re^{i\theta}} + \widehat{(re^{i\theta})^*})^k ]\varphi_k(re^{i\theta}) ||^p_{\bc} \, d\theta\\
&=   \int_0^{2\pi} || [(1-r^k)\cos(k\theta) ]\varphi_k(re^{i\theta}) ||^p_{\bc} \, d\theta \\
&\leq 2^{p/2} \int_0^{2\pi} || (1-r^k)\cos(k\theta) ||_{\bc}^p \,||\varphi_k(re^{i\theta}) ||^p_{\bc} \, d\theta \\
&= 2^{p/2} \int_0^{2\pi} | (1-r^k)\cos(k\theta) |^p \,||\varphi_k(re^{i\theta}) ||^p_{\bc} \, d\theta \\
&\leq 2^{p/2} \int_0^{2\pi} (1-r^k)^p \,||\varphi_k(re^{i\theta}) ||^p_{\bc} \, d\theta.
\end{align*}
Hence, 
\begin{align*}
&C_{p,n,k} \sum_{k = 0}^{n-1} \int_0^{2\pi} || \varphi_{k,nt}(e^{i\theta}) -  \varphi_k(re^{i\theta}) ||^p_{\bc} \, d\theta\\
    &\quad\quad + C_{p,n} \sum_{k = 0}^{n-1} \int_0^{2\pi} || [(\widehat{e^{i\theta}} + \widehat{(e^{i\theta})^*})^k -  (\widehat{re^{i\theta}} + \widehat{(re^{i\theta})^*})^k ]\varphi_k(re^{i\theta}) ||^p_{\bc} \, d\theta\\
&\leq C_{p,n,k} \sum_{k = 0}^{n-1} \int_0^{2\pi} || \varphi_{k,nt}(e^{i\theta}) -  \varphi_k(re^{i\theta}) ||^p_{\bc} \, d\theta\\
    &\quad\quad + 2^{p/2} C_{p,n} \sum_{k = 0}^{n-1}  \int_0^{2\pi} (1-r^k)^p \,||\varphi_k(re^{i\theta}) ||^p_{\bc} \, d\theta.
\end{align*}
Observe that the first sum in the last line of the inequality converges to zero as $r \nearrow 1$, by (\ref{bvconhoivk}), and the second sum converges to zero as $r\nearrow 1$, by Lebesgue's Dominated Convergence Theorem. Therefore, 
\[
\lim_{r \nearrow 1} \int_0^{2\pi} ||f_{nt}(e^{i\theta}) - f(re^{i\theta}) ||^p_{\bc} \, d\theta  = 0.
\]

\end{proof}

\begin{theorem}
    For $n$ a positive integer, $0 < p < \infty$, and $A, B \in W^{n-1,q}(D,\bc)$, $q>2$, every $f \in V^{n,p}_{A,B}(D,\bc)$ with representation $f = \sum_{k=0}^{n-1} (\widehat{z^*} + \widehat{z})^k \varphi_k$ such that $\varphi^+_k  \in H^p_{\gamma_k}(D)$, $\gamma_k \in L^{\sigma_k}(D)$, $\sigma_k > 2$ or $1 < \sigma_k \leq 2$ and $p < \frac{\sigma_k}{2-\sigma_k}$, for every $k$, has a nontangential boundary value $f_{nt} \in L^p(\p D,\bc)$ and 
    \[
        \lim_{r \nearrow 1} \int_0^{2\pi} ||f_{nt}(e^{i\theta}) - f(re^{i\theta}) ||^p_{\bc} \, d\theta  = 0. 
    \]
\end{theorem}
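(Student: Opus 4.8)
The plan is to reduce this to the meta-Hardy case already handled, specifically the boundary-behavior theorem proved for $V^{n,p}_{A,0}(D,\bc)$, by re-running the same argument but with the polynomial variable $\widehat{z}+\widehat{z^*}$ in place of $\widehat{z^*}$ and with Theorem \ref{bcvekhardybvfromidempotent} in place of Theorem \ref{bcholobvcon} at the crucial step. First I would invoke Theorem \ref{Thm: bchoivhardyrep} to write $f = \sum_{k=0}^{n-1}(\widehat{z}+\widehat{z^*})^k\varphi_k$ with each $\varphi_k\in H^p_{A,B}(D,\bc)$. Then, since by hypothesis $\varphi_k^+\in H^p_{\gamma_k}(D)$ with $\gamma_k$ and $p$ in the admissible range, Theorem \ref{bcvekhardybvfromidempotent} applies to each $\varphi_k$ and yields a nontangential limit $\varphi_{k,nt}\in L^p(\p D,\bc)$ together with $L^p$-convergence $\lim_{r\nearrow 1}\int_0^{2\pi}||\varphi_{k,nt}(e^{i\theta})-\varphi_k(re^{i\theta})||_\bc^p\,d\theta = 0$. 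From the factored form, the candidate boundary value is $f_{nt} = \sum_{k=0}^{n-1}(\widehat{e^{i\theta}}+\widehat{(e^{i\theta})^*})^k\varphi_{k,nt}$, and membership $f_{nt}\in L^p(\p D,\bc)$ follows from the quasi-triangle inequality for $||\cdot||_\bc^p$ together with boundedness of $||\widehat{e^{i\theta}}+\widehat{(e^{i\theta})^*}||_\bc$ on $\p D$.

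For the convergence statement I would split, exactly as in the proof of Theorem \ref{thm: bchoivhardybvcon}, into two pieces after adding and subtracting $\sum_k(\widehat{e^{i\theta}}+\widehat{(e^{i\theta})^*})^k\varphi_k(re^{i\theta})$: a ``coefficient'' term $\sum_k\int_0^{2\pi}||(\widehat{e^{i\theta}}+\widehat{(e^{i\theta})^*})^k(\varphi_{k,nt}(e^{i\theta})-\varphi_k(re^{i\theta}))||_\bc^p\,d\theta$, which is controlled by $\sum_k\int_0^{2\pi}||\varphi_{k,nt}(e^{i\theta})-\varphi_k(re^{i\theta})||_\bc^p\,d\theta\to 0$ using the uniform bound on $||(\widehat{e^{i\theta}}+\widehat{(e^{i\theta})^*})^k||_\bc$ and \eqref{stareqn}; and a ``monomial'' term $\sum_k\int_0^{2\pi}||[(\widehat{e^{i\theta}}+\widehat{(e^{i\theta})^*})^k-(\widehat{re^{i\theta}}+\widehat{(re^{i\theta})^*})^k]\varphi_k(re^{i\theta})||_\bc^p\,d\theta$. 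For the second term I would compute, just as in Theorem \ref{thm: bchoivhardybvcon}, that $(\widehat{e^{i\theta}}+\widehat{(e^{i\theta})^*})^k-(\widehat{re^{i\theta}}+\widehat{(re^{i\theta})^*})^k = (1-r^k)\cos(k\theta)$, so this term is bounded by $2^{p/2}(1-r^k)^p\int_0^{2\pi}||\varphi_k(re^{i\theta})||_\bc^p\,d\theta \le 2^{p/2}(1-r^k)^p||\varphi_k||_{H^p_\bc}^p\to 0$ as $r\nearrow 1$. Combining, $\lim_{r\nearrow 1}\int_0^{2\pi}||f_{nt}(e^{i\theta})-f(re^{i\theta})||_\bc^p\,d\theta = 0$.

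I do not expect a genuine obstacle here: every ingredient is already in place, and the only mild care needed is the quasi-triangle inequality bookkeeping when $0<p<1$ (the constants $C_p$, $C_{p,n}$ depending only on $p$ and $n$), and noting that $||\varphi_k||_{H^p_\bc}<\infty$ since $\varphi_k\in H^p_{A,B}(D,\bc)$, which is what makes the Dominated Convergence / direct estimate on the monomial term legitimate. The proof is essentially identical in structure to that of Theorem \ref{thm: bchoivhardybvcon}, the difference being that we feed in Theorem \ref{bcvekhardybvfromidempotent} rather than first passing through a representation of the second kind and Theorem \ref{Thm: bcvekhardybvcon}; in particular we do not need to assume anything about $\dbar\varphi_k$ beyond what is guaranteed by membership in $H^p_{A,B}(D,\bc)$ together with the idempotent-component hypothesis on $\varphi_k^+$. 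I would write it out sharing notation with the earlier proof, perhaps even remarking that ``the proof follows that of Theorem \ref{thm: bchoivhardybvcon} \emph{mutatis mutandis}, replacing the appeal to Theorem \ref{Thm: bcvekhardybvcon} with one to Theorem \ref{bcvekhardybvfromidempotent}.''
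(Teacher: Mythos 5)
Your proposal matches the paper's proof essentially verbatim: the paper likewise invokes Theorem \ref{Thm: bchoivhardyrep} to obtain the representation with $\varphi_k \in H^p_{A,B}(D,\bc)$, applies Theorem \ref{bcvekhardybvfromidempotent} to each $\varphi_k$ under the stated hypothesis on $\varphi_k^+$, and then states that the remaining estimates are precisely those of the proof of Theorem \ref{thm: bchoivhardybvcon} after (\ref{bvconhoivk}). Your write-up is correct and takes the same route.
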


\begin{proof}
By Theorem \ref{Thm: bchoivhardyrep}, if $f \in V^{n,p}_{A,B}(D,\bc)$, then $f = \sum_{k = 0}^{n-1} (\widehat{z} + \widehat{z^*})^k \varphi_k$ and $\varphi_k \in H^p_{A,B}(D,\bc)$, for every $k$. By Theorem \ref{thm: bcvekhardyimpliescvekhardy}, if $\varphi_k \in H^p_{A,B}(D,\bc)$, for every $k$, then $\varphi_k = p^+ \varphi_k^+ + p^- \varphi_k^-$, for every $k$. Since $\varphi_k^+ \in H^p_{\gamma_k}(D)$, where $\gamma_k \in L^{\sigma_K}(D)$, $\sigma_k > 2$ or $1 < \sigma_k \leq 2$ and $p < \frac{\sigma_k}{2-\sigma_k}$, for every $k$, it follows, by Theorem \ref{bcvekhardybvfromidempotent}, that the nontangential boundary value $\varphi_{k,nt}$ of $\varphi_k$ exists, is in $L^p(\p D)$, and 
\[
    \lim_{r\nearrow 1}\int_0^{2\pi} ||\varphi_k(re^{i\theta}) - \varphi_{k,nt}(e^{i\theta})||^p_{\bc} \,d\theta = 0,
\]
for every $k$. The rest of the details are precisely the same as in the proof of Theorem \ref{thm: bchoivhardybvcon} after (\ref{bvconhoivk}).
\end{proof}

\begin{corr}
     For $n$ a positive integer, $p \geq 1$, and $A, B \in W^{n-1,q}(D,\bc)$, $q>2$, every $f \in V^{n,p}_{A,B}(D,\bc)$ with representation $f = \sum_{k=0}^{n-1} (\widehat{z^*} + \widehat{z})^k \varphi_k$ such that at least one of the following conditions holds:
     \begin{itemize}
     \item $\varphi_k = h_k + T_\bc(A\varphi_k + B\overline{\varphi_k})$, $\dbar \varphi_k \in L^{q_k}(D,\bc)$, $q_k > 2$, and $h_k^+ \in H^p_{\gamma_k}(D)$, $\gamma_k \in L^{\sigma_k}(D)$, $\sigma_k > 2$ or $1 < \sigma_k \leq 2$ and $p < \frac{\sigma_k}{2-\sigma_k}$, for every $k$, or
     \item $\varphi^+_k  \in H^p_{\lambda_k}(D)$, $\lambda_k \in L^{\nu_k}(D)$, $\nu_k > 2$ or $1 < \nu_k \leq 2$ and $p < \frac{\nu_k}{2-\nu_k}$, for every $k$,
     \end{itemize}
     has a boundary value in the sense of distributions $f_b$ and $f_b = f_{nt} \in L^p(\p D,\bc)$. 
\end{corr}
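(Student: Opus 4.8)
The plan is to derive the corollary by combining the two boundary-behavior theorems immediately above with Theorem~\ref{lonedistbv}. First, note that the first listed condition is literally the hypothesis of Theorem~\ref{thm: bchoivhardybvcon}, while the second listed condition is literally the hypothesis of the theorem preceding this corollary; so under either assumption one already knows that $f$ has a nontangential boundary value $f_{nt}\in L^p(\p D,\bc)$ together with
\[
\lim_{r\nearrow 1}\int_0^{2\pi}\bigl\|f(re^{i\theta})-f_{nt}(e^{i\theta})\bigr\|_{\bc}^{p}\,d\theta=0 .
\]
Since $p\ge 1$ and the circle has finite measure, Hölder's inequality gives $\|g\|_{L^1}\le(2\pi)^{1-1/p}\|g\|_{L^p}$ for any measurable $g$, whence $f_{nt}\in L^p(\p D,\bc)\subseteq L^1(\p D,\bc)$ and the displayed $L^p$-convergence upgrades to the $L^1$-convergence
\[
\lim_{r\nearrow 1}\int_0^{2\pi}\bigl\|f(re^{i\theta})-f_{nt}(e^{i\theta})\bigr\|_{\bc}\,d\theta=0 .
\]

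Next I would verify the remaining hypothesis of Theorem~\ref{lonedistbv}, namely $f\in L^1(D,\bc)$. By Theorem~\ref{Thm: bchoivhardyrep}, each coefficient $\varphi_k$ belongs to $H^p_{A,B}(D,\bc)$. Since $A,B\in W^{n-1,q}(D,\bc)\subseteq L^q(D,\bc)$ with $q>2$, Proposition~\ref{propLqiff} gives $A^\pm,B^\pm\in L^q(D)$, hence $(A^+)^*,(B^+)^*\in L^q(D)$ as well. Theorem~\ref{thm: bcvekhardyimpliescvekhardy} then places $(\varphi_k^+)^*$ in $H^p_{(A^+)^*,(B^+)^*}(D)$ and $\varphi_k^-$ in $H^p_{A^-,B^-}(D)$, and Theorem~\ref{inHardypinLebm} shows each of these lies in $L^m(D)$ for every $0<m<2p$; taking $m=1$ (legitimate because $2p\ge 2$) gives $\varphi_k^+,\varphi_k^-\in L^1(D)$, so $\varphi_k\in L^1(D,\bc)$. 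Finally, $\widehat{z}+\widehat{z^*}=2\re z$ is real-valued on $D$ with $\|(\widehat{z}+\widehat{z^*})^k\|_{\bc}\le 2^k$ there, and $f=\sum_{k=0}^{n-1}(\widehat{z}+\widehat{z^*})^k\varphi_k$ is a finite sum, so $f\in L^1(D,\bc)$.

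With $f\in L^1(D,\bc)$, its $L^1(\p D,\bc)$ boundary value $f_{nt}$, and the $L^1$-norm convergence established above, Theorem~\ref{lonedistbv} applies and yields that $f_b$ exists with $f_b=f|_{\p D}=f_{nt}$ as distributions; since $f_{nt}\in L^p(\p D,\bc)$, this is exactly the claim. I do not expect a genuine obstacle here — the two boundary-behavior theorems and Theorem~\ref{lonedistbv} carry the entire argument — and the only place a short argument is needed is the membership $f\in L^1(D,\bc)$, obtained by chasing the idempotent components $(\varphi_k^+)^*$ and $\varphi_k^-$ through Theorems~\ref{thm: bcvekhardyimpliescvekhardy} and~\ref{inHardypinLebm}. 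Should one prefer to avoid Theorem~\ref{lonedistbv}, the identity $\langle f_b,\phi\rangle=\int_0^{2\pi}f_{nt}(e^{i\theta})\phi(\theta)\,d\theta$ for $\phi\in C^\infty(\p D)$ also follows directly from the $L^1$-norm convergence, giving $f_b=f_{nt}$ without the intermediate step.
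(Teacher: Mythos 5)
Your proposal is correct and follows exactly the route the paper intends: the paper gives no separate proof of this corollary, stating only that it ``follows from the combination of the two theorems and Theorem~\ref{lonedistbv}.'' Your write-up supplies the details the paper leaves implicit --- in particular the H\"older upgrade from $L^p$ to $L^1$ convergence and the verification that $f\in L^1(D,\bc)$ via Theorems~\ref{Thm: bchoivhardyrep}, \ref{thm: bcvekhardyimpliescvekhardy}, and \ref{inHardypinLebm} --- and both are sound.
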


\section*{Acknowledgments}
The author thanks Briceyda B. Delgado for their hard work and support in producing the initial research on the HOIV-Hardy spaces \cite{WBD} and for our ongoing conversations about solutions to higher-order iterated Vekua equations. 

The author also thanks the anonymous referee for providing many helpful suggestions that have improved the quality of this article.

\printbibliography
\end{document}